\newtheorem{theorem}{Theorem}
\newtheorem{lemma}{Lemma}
\newtheorem{proposition}{Proposition}
\newtheorem{corollary}{Corollary}
\theoremstyle{remark}
\newtheorem{example}{Example}
\newtheorem{remark}{Remark}
\def\d{\mathop{\rm d\!}\nolimits}
\def\dd{\mathop{\rm d}\nolimits}
\def\vol{\mathop{\rm vol}\nolimits}
\def\M{\mathop{\boldsymbol M}\nolimits}
\def\TM{\mathop{\boldsymbol{T\!M}}\nolimits}
\newcommand{\E}[1]{\operatorname{E}\left[#1\right]}
\newcommand{\red}{\textcolor{red}}
\begin{document}
\title{A diffusion approach to Stein's method \\ on Riemannian manifolds}
%\author{H. Le\thanks{Huiling.Le@nottingham.ac.uk}, A. Lewis, K. Bharath and C. Fallaize}
\author{Huiling Le}% \thanks{Huiling.Le@nottingham.ac.uk}
\email{Huiling.Le@nottingham.ac.uk}
\author{Alexander Lewis}
\author {Karthik Bharath}
\email{Karthik.Bharath@nottingham.ac.uk}
\author{Christopher Fallaize}
\address{School of Mathematical Sciences, 
University of Nottingham, UK}
\date{}
\keywords{Coupling; integral metrics; Stein equation; stochastic flow; Wasserstein distance}
%\subjclass[2000]{60D05, 60J60, 60H99, 58C05}
\maketitle

	\begin{abstract}
				We detail an approach to developing Stein's method for bounding integral metrics on probability measures defined on a Riemannian manifold $\M$.  Our approach exploits the relationship between the generator of a diffusion on $\M$ having a target invariant measure and its characterising Stein operator. We consider a pair of such diffusions with different starting points, and through analysis of the distance process between the pair, derive Stein factors, which bound the solution to the Stein equation and its derivatives. The Stein factors contain curvature-dependent terms and reduce to those currently available for $\mathbb R^m$, and moreover imply that the bounds for $\mathbb R^m$ remain valid when $\M$ is a flat manifold. 
		\end{abstract}

\section{Introduction}
The eponymous method to estimate integral metrics and semi-metrics on spaces of probability measures proposed by Charles Stein \citep{CS} has led to tremendous improvements in distributional approximation techniques. See, for example, the surveys by \cite{BC} and \cite{ NR}. The method has mainly been developed for probability measures on $\mathbb{R}^m$ or $\mathbb{N}^m$ for $m \geq 1$. The focus of this paper is on developing a version of the method that can be employed to approximate probability measures on an $m$-dimensional Riemannian manifold. 

Abstracting Stein's method to a general space $\mathcal{X}$ in a heuristic manner is useful for elucidating its key ingredients and the ensuing challenges in developing a corresponding version on manifolds. The goal is to bound an integral (semi-)metric
\[
d_\mathcal{H}(\mu,\nu):=\sup_{h \in \mathcal{H}}\left|\int h \text{d} \mu -\int h \text{d}\nu\right|,
\]
between a probability measure $\nu$ and a target probability measure $\mu$ on $\mathcal{X}$ with respect to a class $\mathcal{H}$ of real-valued test functions on $\mathcal{X}$.  Stein's method is centred around the construction and study of an operator $L$ that maps functions  $f:\mathcal{X} \to \mathbb{R}$ in a certain class $\mathcal{F}$ into mean-zero functions under $\mu$: if $X \sim \mu$, then $\E{Lf(X)}=0$ for every $f \in \mathcal{F}$. The operator $L$ thus encodes information about $\mu$ and, when $\mathcal F$ is sufficiently large, one may determine a function $f_h \in \mathcal{F}$ associated with every $h \in \mathcal{H}$ that solves the \emph{Stein equation} (or the Poisson equation in PDE literature)
\[
h(x)-\E{h(X)}=Lf_h(x).
\]
As a consequence, bounding $d_\mathcal{H}(\mu,\nu)$ reduces to bounding the term $\sup_{f_h \in \mathcal{F}}\E{Lf_h(Z)}$, where $Z \sim \nu$, achieved in application-specific ways. An important implication, profitably used in some applications, is that the need to compute an expectation with respect to $\mu$ in $d_\mathcal{H}$ is circumvented; an example is when $\nu$ is the empirical measure based on points $x_1,\ldots,x_n$ on $\mathcal{X}$ and $\mu$ represents a conjectured limit probability measure. Upper bounds on $d_\mathcal{H}$ then depend explicitly on the smoothness of the functions in $\mathcal{F}$. Hence, integral to the success of Stein's method in upper bounding $d_\mathcal{H}$ are the following requirements: (a) construction of the operator $L$ and identifying its domain $\mathcal{F}$; and, (b) determination of the solution $f_h$ and its regularity properties. 

An introductory account on choices of $L$ satisfying requirement (a) for various probability measures $\mu$ on $\mathbb{R}$ (or some subset thereof) is available in \cite{NR}. When $\mathcal{X}=\mathbb{R}^m, m>1$, focus has mainly been restricted to the case when $\mu$ is a Gaussian measure (see \cite{CM, ADB, EM}) although more recently results on extensions to non-Gaussian measures have appeared in \cite{MG, MRS, FSX, CNXY}. 

An important observation by \cite{ADB} relates the operator $L$ to the infinitesimal generator of a diffusion process on $\mathbb{R}^m$ that solves an SDE with invariant measure $\mu$. This observation enables identification, and examination, of the solution to the Stein equation with the transition semigroup associated with $L$. The diffusion approach hence opens up the possibility of defining $L$ for $\mu$ on a manifold $\M$ by considering an SDE on $\M$ whose solution is a diffusion with invariant measure $\mu$. 

Broadly, this is the approach we adopt in this paper. On a complete Riemannian manifold $(\M,g)$ without boundary, we consider approximating probability measures of the form $\mu_\phi$ with density, up to a normalisation constant, $e^{-\phi}$ with respect to the volume measure $\d\vol$ for a smooth $\phi$. Under some conditions on $\phi$ and the geometry of $\M$, the diffusion with infinitesimal generator 
\[L_\phi:=\frac{1}{2}\left\{\Delta-\langle\nabla\phi,\,\nabla\rangle\right\}\]
has $\mu_\phi$ as its invariant measure, where $\nabla$ and $\Delta$ are the (Riemannian) gradient and Laplace-Bertrami operators, respectively. The operator $L_\phi$ generates mean-zero functions under $\mu_\phi$.

We address requirement (b) for generalising Stein's method to $\M$ by adapting the approach in \cite{MG} for log-concave measures $\mu$ on $\mathbb{R}^m$ to the manifold setting. In their paper, bounds on lower-order derivatives of the solution $f_h$, known as \emph{Stein factors} (see \cite{AR}), were derived by studying the distance between a pair of coupled diffusions $X_{t}$ and $Y_{t}$ with same invariant measure $\mu$ starting at distinct points. Analogously, we construct a pair of diffusions $X_{t,x}$ and $Y_{t,y}$ on $\M$ starting at $x$ and $y$ with identical generator $L_\phi$, and study the distance process $\rho(X_{t,x},Y_{t,y})$ around neighbourhoods of non-empty cut loci. In particular, when there is no first conjugate point contained in the cut locus to any given point in $\M$ we establish exponential \emph{pathwise} contraction for trajectories of the two diffusions towards their initial points; on the other hand when first conjugate points are present, we establish a similar contraction property that holds on \emph{average}. 

The study of the distance process enables the determination of Stein factors which bound the Lipschitz constants of the solution $f_h$, and its first and second derivatives, where the geometry of $\M$ manifests itself through curvature-dependent terms in the factors.  The derived bounds on $f_h$, as well as on its first and second derivatives, reduce to the ones of \cite{MG} for $\mathbb R^m$, which we show remain valid for complete, connected flat manifolds. The Stein factors are then used to construct upper bounds on integral (semi-)metrics between $\mu_\phi$ and another probability measure on $\M$ for specific choices of the class of test functions $\mathcal{H}$. In particular, using the first order bound on $f_h$, we derive an upper bound on the Wasserstein distance between $\mu_\phi$ and  $\mu_\psi$. A related generalisation of Stein's method to manifolds, based on the approach of \cite{FSX}, can be found in \cite{JT}. 

The paper is organised as follows. In Section \ref{notation} we define relevant quantities and introduce notation, and in Section \ref{assumptions} we describe assumptions on probability measures and diffusions under consideration, and the key condition \eqref{eqn2} and assumption (A1)  for the derivation of our results; the conditions are explicated with some examples. In Section \ref{distance} we describe the coupling of a pair of diffusions on $\M$ and analyse their distance process, when conjugate points are absent (Section \ref{noconjugate}) and present (Section \ref{conjugate}). In Section \ref{Stein_equation} we consider the Stein equation and its solution and derive Stein factor bounds that depend on the curvature of $\M$.  In Section \ref{bounds}, using the Stein factors, we derive bounds for integral (semi-)metrics: in Section \ref{wasserstein_bound} we derive an upper bound on the Wasserstein distance between $\mu_\phi$ and a probability measure of similar type, and in Section \ref{general_bound} we do the same for an integral semi-metric between $\mu_\phi$ and an arbitrary probability measure on $\M$.  
\section{Preliminaries}
\subsection{Notation and definitions}
\label{notation}
We assume throughout that  $(\M,g)$ is a complete and connected Riemannian manifold without boundary of dimension $m$ and with covariant derivative $D$; by $D^i, i>1$ we then denote higher orders of $D$. We shall denote by $\rho(x,y)$ the Riemannian distance between any two points $x$ and $y$ in $\M$, and by $\d\vol$ the Riemannian volume measure of $(\M,g)$.  We denote by $T_x(\M)$ the tangent space to $\M$ at $x\in\M$ and by $\TM$ the tangent bundle of $\M$. For $k \geq 1$, $\mathcal{C}^k(\M)$ denotes the class of $k$-times continuously differentiable real-valued functions on $\M$, $\mathcal{C}(\M)$ denotes the set of continuous functions, and $\mathcal{C}_0(\M)$ denotes continuous functions vanishing at infinity. The Lipschitz constant $C_0(h)$ of a Lipschitz function $h\in\mathcal C(\M)$ is defined as
\[C_0(h):=\sup_{x\not=y\in\M}\frac{|h(x)-h(y)|}{\rho(x,y)}\thinspace.\]
Higher-order Lipschitz constants of a function depend on bounding tensor fields. Accordingly, for each $x\in\M$ define the operator norm at $x$ for a tensor field $T$ on $\M$, based on $n$-fold tangent vectors at $x$, as
\[\|T\|_{op}:=\sup_{v_1,\cdots,v_n\in T_x(\M),\,|v_i|\not=0}\frac{|T(v_1,\cdots,v_n)|}{\prod\limits_{i=1}^n|v_i|}.\]
Then, if $h\in\mathcal C^k(\M)$, for $k \geq 1$, we may define 
\begin{equation}
\label{tensor_norm}	
C_i(h):=\sup_{\gamma_{x,y}, x\not=y\in\M}\frac{\|D^ih(x)-\Pi_{\gamma_{x,y}}(D^ih(y))\|_{op}}{\rho(x,y)},\qquad i=1,\ldots,k,
\end{equation}
and call them the Lipschitz constants of $D^ih$, where $\gamma_{x,y}$ denotes any possible minimal geodesic from $y$ to $x$ and $\Pi_{\gamma_{x,y}}$ denotes the parallel transport from $T_y(\M)$ to $T_x(\M)$ along $\gamma_{x,y}$. Note that $Dh=dh$ and that $\hbox{Hess}^h=D^2h$, where $\hbox{Hess}^h$ is the Hessian of $h$. Note also that $\sup_{x\in\M}\|D^{i+1}h(x)\|_{op}=C_i(h)$ for $i=0,\cdots,k-1$. Finally, if $X$ and $Z$ are two random variables on $\M$ with $X \sim \mu$ and $Z \sim \nu$, abusing notation, we interchangeably use $d_\mathcal{H}(\nu,\mu)$ and $d_\mathcal{H}(Z,X)$ to denote the integral (semi-)metric between the two probability measures, where 
$$d_\mathcal{H}(Z,X):=\sup_{h \in \mathcal H}|\E{ h(Z)}-\E{h(X)}|$$
with respect to a set of real-valued test functions $\mathcal H$. 

\subsection{Key assumptions}
\label{assumptions}
On $\M$, we consider probability measures of the form
\[
\d\mu_\phi=\frac{1}{c(\phi)}e^{-\phi}\d\vol,
\]
with $c(\phi)=\int_{\M}e^{-\phi}\d\vol$ $<\infty$ and with support on the entire space $\M$. We assume that $\phi\in\mathcal C^2(\M)$ is such that $\nabla\phi$ is Lipschitz; specifically, we assume that $D\phi$ has finite Lipschitz constant $C_1(\phi)$. Throughout, $X$ denotes a random variable with $X\sim\mu_\phi$.

The uniformly elliptic operator $L_\phi=1/2\left\{\Delta-\langle\nabla\phi,\,\nabla\rangle\right\}$ then is the infinitesimal generator of a Feller diffusion process that solves the It\^o stochastic differential equation
\begin{eqnarray}
	\d X_t=\d B^{\M}_t-\frac{1}{2}\nabla\phi(X_t)\,\d t,
	\label{eqn1}
\end{eqnarray}
where $B^{\M}_t$ is a Brownian motion on $\M$. 
If there is a constant $\kappa>0$ such that
\begin{eqnarray}
	\hbox{Ric}(x)+\hbox{Hess}^{\phi}(x)\geqslant-\kappa\,g(x),\quad\forall x\in\M,
	\label{eqn2}
\end{eqnarray}
where Ric is the Ricci curvature tensor, then the corresponding semigroup $P_t=e^{tL_\phi}$ is conservative (see \cite{DB}), i.e., $P_t1\equiv1$ for all $t>0$ or, equivalently, $X_t$ will, with probability one, not leave $\M$ in finite time. For a successful development of the Stein's method on $\M$, we need the \emph{Bakry-Emery curvature criterion}: there is a constant $\kappa>0$ such that, 
\begin{equation*}
(\text{A1}): \qquad \hbox{{\rm Ric}}(x)+\hbox{{\rm Hess}}^\phi (x)\geqslant2\kappa\,g(x) \quad\forall x\in\M.
	\label{eqn6a}
\end{equation*}
Evidently, assumption (A1) implies the condition in \eqref{eqn2}. 

\begin{remark}
	\label{remark1}
	When $\M=\mathbb R^m$, (A1) simplifies to $v^\top\hbox{Hess}^\phi v\geqslant2\kappa$ for any unit (column) vector $v$ in $\mathbb R^m$ where, as usual, $\hbox{Hess}^\phi$ is treated as an $m\times m$ matrix. Hence, (A1) reduces to the requirement in \cite{MG} that $-\phi$ is $2\kappa$-strongly concave, noting that in their notation, $\phi$ here is $-\log p$, up to a constant. This is also true if the Ricci curvature of $\M$ is always non-positive. In general, (A1) is weaker than the requirement that $-\phi$ is $c$-strongly concave for some $c>0$. 
\end{remark}

\begin{example}
	\label{examples}
In order to elucidate condition \eqref{eqn2} and assumption (A1) we look at a few example manifolds and probability measures $\mu_\phi$. 
\begin{enumerate}[label=(\roman*), leftmargin=*]
	\itemsep 1.3em
	\item  \emph{$\M$ is the standard sphere $S^m$ of dimension $m$}.  The function $\phi(x)$ corresponding to the von Mises-Fisher distribution $M_m(x_0,c)$ takes the form $\phi(x)=-c\cos(r(x))$, with $r(x)=\rho(x_0,x)$ for $c>0$ and a fixed point $x_0 \in \M$. Since $D^2f(r)=f''(r)\d r\times\d r+f'(r)D^2 r$ on general manifolds and since $D^2r=\cot(r)\{g-\d r\times\d r\}$ on $S^m$ (see \cite{GW}), it follows that
	\[\hbox{Hess}^\phi(x)=-c\,D^2\cos(r(x))=c\,\cos(r(x))\,g(x);\]
this ensures that 
	\[\hbox{Ric}(x)+\hbox{Hess}^\phi(x)\geqslant\{(m-1)-c\}g(x),\]
and condition \eqref{eqn2} holds for the von Mises-Fisher distribution with $\kappa>\max\{-(m-1)+c,0\}$.  However, if there is a $\kappa>0$ such that assumption (A1) holds, then we must have $0<c<m-1$. This requires in particular $m>1$, and thus any von Mises-Fisher distribution on the circle fails to satisfy (A1).
	\item 
	\emph{$\M$ is hyperbolic space  $\mathbb H^m$ with sectional curvature $-1$}.  For $\phi(x)=c\rho(o,x)^2$ where $c>0$ and $o$ is a fixed point in $\M$, we have $\int_{\M}e^{-\phi}\d\hbox{vol}<\infty$ as, in terms of normal coordinates at $o$, $\d\hbox{vol}=\sinh(\rho)^{m-1}\d\rho\d\theta$. On the other hand, Hess$^\phi(x)\geqslant 2c g(x)$ by the Hessian Comparison Theorem and Ric$(x)=-(m-1)g(x)$. Hence condition \eqref{eqn2} holds with $\kappa>\max\{(m-1)-2c,0\}$ for such a $\phi$. Moreover, if $c>(m-1)/2$, then there is a $\kappa>0$ such that assumption (A1) holds.
	%%%%%%%%%
	\item
	\emph{$\M$ is the complex projective space $\mathbb{CP}^m$ equipped with the Fubini-Study metric. This is also the Kendall shape space of configurations in $\mathbb R^2$ with $m+1$ labelled landmarks.} Let $A$ be an $(m+1)\times(m+1)$ Hermitian matrix, i.e. $A=A^*$ and $\phi(z)=-z^*Az$, for $z=x+iy\in\mathbb C^{m+1}$ (column vectors) and $|z|=1$, where $A^*$ denotes the complex conjugate transpose of $A$. Without loss of generality, we may assume that the smallest eigenvalue of $A$ is zero. The corresponding $\mu_\phi$ is the complex Bingham distribution on $\mathbb CS^m=S^{2m+1}$. Since $\phi(z)=\phi(e^{i\theta}z)$, $\mu_\phi$ can be regarded as a distribution on $\M$ (see \cite{JTK}). It can be shown that Hess$^\phi(w,w)=2\{\phi(z)-\phi(w)\}\geqslant-2\lambda_{\max}$ for a horizontal (with respect to the projection from $S^{2m+1}$ to $\mathbb{CP}^m$) unit vector $w\in T_z(S^{2m+1})$, where $\lambda_{\max}>0$ is the largest eigenvalue of $A$.
	
The complex projective space $\mathbb{CP}^m$ equipped with the Fubini-Study metric is an Einstein manifold with its Ricci curvature tensor equal to $2(m+1)$ times the metric tensor. Thus,
\[\text{Ric}+\text{Hess}^\phi\geqslant2\left\{m+1-\lambda_{\max}\right\}g,\]
and so, for the complex Bingham distribution on $\mathbb{CP}^m$, condition \eqref{eqn2} holds with $\kappa>2\max\{\lambda_{\max}-(m+1),0\}$ and assumption (A1) holds if $\lambda_{\max}<m+1$.
	%%%%%%%%%%%%
	\item
	\emph{$\M$ is the rotation group $SO(m)$ with the bi-invariant metric determined by $g(E_1,E_2)$ $:=-\frac{1}{2}\text{tr}(E_1E_2)$ for skew-symmetric $E_1, E_2$, where $m>2$}. Assume that, for $S\in\M$, $\phi(S)=-c\,\text{tr}(S_0S)$ with $S_0\in SO(m)$ and a constant $c>0$. Then, the corresponding $\mu_\phi$ is a von Mises-Fisher distribution on $SO(m)$. It can be shown that Hess$^\phi\geqslant-c\,g$. 
	
Recall that the Killing form of $\M$ is $B(E_1,E_2)=(m-2)\text{tr}(E_1E_2)$ and the Ricci curvature Ric$(E_1,E_2)=-\frac{1}{4}B(E_1,E_2)=\frac{m-2}{2}g(E_1,E_2)$. Thus, in this case,
\[\text{Ric}+\text{Hess}^\phi\geqslant\left\{\frac{m-2}{2}-c\right\}g,\]
and so, for the von Mises-Fisher distribution on $SO(m)$, condition \eqref{eqn2} holds with $\kappa>\max\{c-(m-2)/2,0\}$ and assumption (A1) holds if $c<(m-2)/2$.
\end{enumerate}
\end{example}

\section{The distance between coupled diffusions}
\label{distance}
Our approach to define the Stein equation on $\M$ and analyse properties of its solution rests on the construction of a pair of diffusions $(X_t,Y_t)$, and handling of the distance process $\rho(X_t,Y_t)$ between the pair. In particular, we prove exponential contraction of $\rho(X_t,Y_t)$ towards the initial points, and thus extend the approach used by \cite{MG} on $\mathbb R^m$ to the manifold setting. In contrast to the Euclidean setting, since the distance function $(x,y) \mapsto \rho(x,y)$ is not in $\mathcal C^2(\M\times\M)$ if the cut locus of a point in $\M$ is not empty, analysis of the distance process $\rho(X_t,Y_t)$ requires additional care.  

\subsection{When no conjugate points are present in cut loci}
\label{noconjugate}
We first consider the relatively simple situation where there is no conjugate point in the cut locus of any given point in $\M$. In this setting, by modifying the arguments in \cite{WSK1} and \cite{WSK2}, we are able to establish exponential pathwise contraction of distance between the diffusions, aided by a key result given in Lemma 3 in Appendix A of Supplementary Material, which expresses the distance function in terms of finitely many smooth functions in neighbourhoods of cut point, despite it not belonging to $\mathcal C^2(\M\times\M)$. 

Note first that, in terms of a Brownian motion $B_t$ on $\mathbb R^m$ starting from the origin, the It\^o differential equation \eqref{eqn1} with initial condition $X_0=x_0$ is equivalent to   
\begin{eqnarray}
	\begin{array}{rcl}
		\dd_s\!X_t&=&\Xi_t\,\dd_s\!B_t-\frac{1}{2}\nabla\phi(X_t)\,\d t, \qquad X_0=x_0;\\ 
		\dd_s\!\Xi_t&=&H_{\Xi}\dd_s\!X_t,\qquad \Xi(X_0)=\xi_0,
	\end{array}
	\label{eqn1a}
\end{eqnarray}
where $\dd_s$ denotes the Stratonovich differential, $H$ the horizontal lift from $T\M$ to the tangent bundle of the orthonormal frame bundle $\mathcal O(\M)$, where $\xi_0$ sits above $x_0$. For an introduction to horizontal lifts and orthonormal frame bundles, see for example, \cite{KN}. 

\begin{theorem}
	Assume that $\M$ has the property that there is no conjugate point to any given point in $\M$, and that the Bakry-Emery curvature criterion (A1) holds for a constant $\kappa>0$. Then, for any $x_0,y_0\in\M$, there is a pair of coupled diffusions $(X_t,Y_t)$ starting from $(x_0,y_0)$ such that both $X_t$ and $Y_t$ satisfy \eqref{eqn1} and, for any $\ell\geqslant1$,
	\begin{eqnarray}
		\rho(X_t,Y_t)^\ell\leqslant\rho(x_0,y_0)^\ell e^{-\ell\kappa t},\qquad t\geqslant0.
		\label{eqn7a}
	\end{eqnarray}
	\label{thm1a}
\end{theorem}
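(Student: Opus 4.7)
I would construct the coupling in the spirit of Kendall \cite{WSK1, WSK2} via horizontal lifts to the orthonormal frame bundle $\mathcal O(\M)$, and then carry out an It\^o expansion of $\rho(X_t,Y_t)$ in which the cut locus is handled using the local representation of $\rho$ as a minimum of finitely many smooth functions from Lemma \ref{lem3}. First, I would drive $X_t$ by the SDE \eqref{eqn1a} with a given Brownian motion $B_t$ on $\mathbb R^m$ and horizontal lift $\Xi_t$ sitting above $x_0$. To define $Y_t$, I would pick a frame $\eta_0$ above $y_0$, and at each time $t$ (for $t$ before the first meeting time $T:=\inf\{t\geqslant 0:X_t=Y_t\}$) take the unit tangent vector $\v_t\in T_{Y_t}(\M)$ pointing along the (locally unique, by the no-conjugate assumption) minimal geodesic from $Y_t$ to $X_t$, parallel transport $\Xi_t$ from $X_t$ to $Y_t$ along this geodesic, and then let $M_t$ be the orthogonal matrix realising reflection across the hyperplane orthogonal to $\v_t$ in the frame $\eta_t$. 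The process $Y_t$ is then defined by the coupled Stratonovich system obtained from \eqref{eqn1a} with $B_t$ replaced by $\int_0^t M_s\,\dd_s B_s$; after $T$ we simply set $Y_t=X_t$. This yields a solution on $\mathcal O(\M)$ whose projection satisfies \eqref{eqn1}, because $M_t$ is orthogonal at all times.

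Next I would analyse $\rho_t:=\rho(X_t,Y_t)$. On the open set where $(X_t,Y_t)$ avoids the cut locus, standard second-variation computations give
\[
\d\rho_t = \langle \text{noise terms}\rangle - \tfrac{1}{2}\bigl\langle\nabla\phi(X_t),\dot\gamma(\rho_t)\bigr\rangle\,\d t + \tfrac{1}{2}\bigl\langle\nabla\phi(Y_t),\dot\gamma(0)\bigr\rangle\,\d t + \tfrac{1}{2}I(\rho_t)\,\d t,
\]
where $\gamma$ is the minimal geodesic from $Y_t$ to $X_t$ and $I(\rho_t)$ is the index form along $\gamma$ of Jacobi fields induced by the parallel frames. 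The mirror coupling is precisely arranged so that the martingale part \emph{cancels}. Applying the index inequality, together with integration by parts along $\gamma$, yields
\[
\d\rho_t \leqslant -\tfrac{1}{2}\int_0^{\rho_t}\bigl(\text{Ric}+\text{Hess}^\phi\bigr)(\dot\gamma,\dot\gamma)\,\d s\,\d t,
\]
and (A1) then bounds the right-hand side by $-\kappa\rho_t\,\d t$. To cover times at which $(X_t,Y_t)$ touches the cut locus, I would use Lemma \ref{lem3} to write $\rho=\min_i r_i$ locally with each $r_i$ smooth along a minimising geodesic $\gamma_i$; applying Meyer--It\^o to the minimum produces, in addition to the pointwise bound above, at most a nonpositive singular local-time contribution, so the differential inequality $\d\rho_t\leqslant -\kappa\rho_t\,\d t$ persists.

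Granted $\d\rho_t\leqslant -\kappa\rho_t\,\d t$ as a semimartingale comparison on $[0,T)$, a Gronwall-type argument gives $\rho_t\leqslant\rho_0 e^{-\kappa t}$ on $[0,T)$, and the convention $Y_t\equiv X_t$ on $[T,\infty)$ extends this inequality for all $t\geqslant 0$. Raising to the $\ell$-th power yields \eqref{eqn7a}; alternatively one can apply It\^o to $\rho_t^\ell$ directly (valid since $\ell\geqslant 1$ and $\rho_t$ is nonnegative) and absorb the second-order term into the same sign-definite drift.

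\textbf{Main obstacle.} The serious difficulty is the cut-locus analysis: at cut points $\rho$ is neither $\mathcal C^2$ nor even semiconvex in general, and one must rule out a positive singular contribution from local time on the cut locus. Lemma \ref{lem3} reduces this to a finite-dimensional minimum calculation, and in the Meyer--It\^o decomposition the local-time terms arising from switching between different minimising branches carry a favourable (nonpositive) sign because the branch that is active after the switch has smaller derivative in the outward direction. The remaining steps --- the frame-bundle construction, the orthogonality of $M_t$, and the index-form estimate --- are essentially bookkeeping once this geometric input is in place.
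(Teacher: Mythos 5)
Your coupling is wrong for the conclusion being sought. You construct a \emph{mirror} (reflection) coupling: parallel transport the frame from $X_t$ to $Y_t$ along the connecting geodesic and then reflect across the hyperplane orthogonal to $\v_t$. Under this coupling the radial noise components of $X_t$ and $Y_t$ do not cancel; they \emph{add}. In the flat one-dimensional picture, mirror coupling is $\d X_t = \d B_t$, $\d Y_t = -\d B_t$, so $\d(Y_t-X_t) = -2\,\d B_t$ — the martingale part of $\rho(X_t,Y_t)$ survives (in fact doubles). Your claim that ``the mirror coupling is precisely arranged so that the martingale part cancels'' is incorrect. The coupling whose distance process is of pure drift type is the \emph{parallel transport (synchronous)} coupling — no reflection — which is exactly what the paper uses: $\d B'_t = (\Upsilon_t^{-1}\tilde\Pi_{X_t,V_t}\Xi_t)\,\d B_t$. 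There one has $\langle u_0, B_t\rangle = \langle v_0, B'_t\rangle$ together with $(\Xi_t u_0)\tilde\rho = -(\Upsilon_t v_0)\tilde\rho$, and the radial noise cancels identically.

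This is not cosmetic. With your mirror coupling the It\^o equation for $\rho_t$ has a nondegenerate martingale term, so Gronwall can only give an \emph{expectation} bound $\E{\rho_t^\ell}\leqslant\rho_0^\ell e^{-\ell\kappa t}$, which is the weaker statement of Theorem~\ref{thm4a}. The present theorem asserts the \emph{pathwise} inequality $\rho(X_t,Y_t)^\ell\leqslant\rho_0^\ell e^{-\ell\kappa t}$, which requires the noise to vanish. Switch to the parallel transport coupling and the rest of your outline — second-variation / index-form estimate, Jacobi field comparison, the use of (A1) to bound $\int_0^{\rho_t}(\mathrm{Ric}+\mathrm{Hess}^\phi)(\dot\gamma,\dot\gamma)\,\d s\geqslant 2\kappa\rho_t$ — goes through as you describe.

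One further remark on the cut locus. Your proposed Meyer--It\^o / local-time treatment of $\rho=\min_i \varrho_i$ is workable, but the paper takes a cleaner route in this no-conjugate-point setting: it tracks the length $\tilde\rho$ of a chosen \emph{intervening} geodesic rather than $\rho$ itself. Since $\tilde\rho$ is smooth on the corresponding sheet of $\text{Exp}^{-1}(\mathcal N)$, the It\^o expansion is elementary, and the pathwise inequality for $\tilde\rho$ implies that for $\rho$ because $\rho\leqslant\tilde\rho$, with equality immediately after each discontinuous jump of the lift $V_t$. This avoids having to argue about the sign of a singular local-time term altogether, though your sign observation is correct in spirit.
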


\begin{proof}
	Consider the map
	\[\hbox{Exp}:\,\,T\M\rightarrow\M\times\M;\quad(x,v)\mapsto(x,\exp_x(v)).\]
	For any $(x,v)\in T\M$, this map provides an intervening geodesic $s\mapsto\exp_x(sv)$, $0\leqslant s\leqslant1$, connecting $x$ and $\exp_x(v)$. The length of this geodesic is at least the distance between $x$ and $\exp_x(v)$. If the interior of this geodesic does not intersect the cut locus of $x$, then it is also a minimal geodesic between its two end points. Denote by $\tilde\Pi_{(x,v)}$ the parallel transport along this \textit{intervening} geodesic from $x$ to $\exp_x(v)$ where, for our purpose, $\tilde\Pi_{(x,v)}$ is taken to be the identity map on $T_x(\M)$ if $x=\exp_x(v)$ even though this may imply a discontinuity.
	
	For any given $(x_0,y_0)\in\M\times\M$, we take $v_0\in T_{x_0}(\M)$ such that 
	\begin{eqnarray}
		\exp_{x_0}(v_0)=y_0\qquad\hbox{ and }\qquad|v_0|=\rho(x_0,y_0). 
		\label{eqn2a}
	\end{eqnarray}
	Under the given assumptions, $y_0$ is not conjugate to $x_0$. Then, if $y_0$ is a cut point of $x_0$, a consequence of the proof of Lemma 3 in Appendix A of Supplementary Material is that there is a neighbourhood $\mathcal N$ of $(x_0,y_0)$ such that $\text{Exp}^{-1}(\mathcal N)$ is a disjoint union of a finite number of open sets on $\TM$ and, restricted to each such set, Exp is a diffeomorphism from that set onto $\mathcal N$. If $y_0$ is not a cut point of $x_0$, then $v_0$ is uniquely determined by $v_0=\exp^{-1}_{x_0}(y_0)$ and a similar result holds with just one component in $\text{Exp}^{-1}(\mathcal N)$. Hence, in particular, $\TM$ is locally a covering space of $\M\times\M$. Within such a neighbourhood $\mathcal N$ of a given $(x_0,y_0)$, we can determine a continuous process $(X_t,V_t)\in\TM$ starting from $(x_0,v_0)$ associated with \eqref{eqn1}, by solving the following coupled diffusions $X_t$ and $Y_t=\exp_{X_t}(V_t)$:
	
	\begin{eqnarray}
		\begin{array}{crl}
			\dd_s\!X_t&=&\Xi_t\,\dd_s\!B_t-\frac{1}{2}\nabla\phi(X_t)\,\d t;\qquad X_0=x_0;\vspace{0.1cm}\\ 
			\dd_s\!Y_t&=&\Upsilon_t\,\dd_s\!B'_t-\frac{1}{2}\nabla\phi(Y_t)\,\d t,\qquad Y_0=y_0;\vspace{0.1cm}\\
			\dd_s\!\Xi_t&=&H_{\Xi}\dd_s\!X_t,\qquad \Xi(X_0)=\xi_0\vspace{0.1cm};\\
			\dd_s\!\Upsilon_t&=&H_{\Upsilon}\dd_s\!Y_t,\qquad\Upsilon(Y_0)=\eta_0\vspace{0.1cm};\\
			\d B'_t&=&(\Upsilon^{-1}_t\tilde\Pi_{X_t,V_t}\Xi_t)\,\d B_t,
		\end{array}
		\label{eqn8}
	\end{eqnarray}
	where, similarly to $\Xi$ and $\xi_0$ for $X$, $\Upsilon$ and $\eta_0$ are respectively a lift of $Y$ to the orthonormal frame bundle $\mathcal O(\M)$ and $\eta_0$ sits above $y_0$. Since $B_t'$ is also a Brownian motion on $\mathbb R^m$, both $X_t$ and $Y_t$ are diffusions satisfying \eqref{eqn1} before they leave $\mathcal N$. 
	
	When $(X_t,Y_t)$ hits the boundary of $\mathcal N$, we can find a neighbourhood $\mathcal N'$ of $(X_t,Y_t)$ satisfying the above properties of $\mathcal N$. Then, allowing $V_t$ to move discontinuously without altering $(X_t,Y_t)$ such that, after the jump, it satisfies \eqref{eqn2a}, we can continue to run $(X_t,Y_t)$ within $\mathcal N'$ so defined. Note that, if $X_{t_0}=Y_{t_0}$ for some $t_0\geqslant0$, then $X_t=Y_t$ for $t\geqslant t_0$.
	
	For $(X_t,Y_t)$ constructed as above, denote by $\tilde\rho(X_t,Y_t)$ the length of the intervening geodesic $\exp_{X_t}(sV_t)$ between $X_t$ and $Y_t=\exp_{X_t}(V_t)$; and write $\gamma_t$ for the unit speed intervening geodesic from $X_t$ to $Y_t$, that is, $\gamma_t(s)=\exp_{X_t}(sV_t/|V_t|)$. Note that $\tilde\rho(X_t,Y_t)$ depends implicitly on the choice of $v_0$, which is not unique when $y_0$ is a cut point of $x_0$. On the other hand, for any given $v_0$ which satisfies \eqref{eqn2a}, $\tilde\rho$ is a smooth function of $(x,y)$ within the neighbourhood $\mathcal N$ chosen as above. However, the change of neighbourhood from $\mathcal N$ to $\mathcal N'$ usually results in a discontinuity for the process $\tilde\rho(X_t,Y_t)$. Nevertheless, $\rho(X_t,Y_t)$ is always continuous and
	\[\rho(X_t,Y_t)\leqslant\tilde\rho(X_t,Y_t),\qquad t\geqslant0,\]
	where the latter becomes an equality immediately after the jump. Hence, to find an upper bound for $\rho(X_t,Y_t)$, it is sufficient to find an upper bound for $\tilde\rho(X_t,Y_t)$. 
	
	To bound $\tilde\rho(X_t,Y_t)$ we may assume, without loss of generality, that $(X_t,Y_t)$ lies in $\mathcal N$ for all $t\geqslant 0$. Write $u_0,u_1,\cdots,u_{m-1}$ for an orthonormal base in $\mathbb R^m$ such that $\Xi_tu_0=\dot\gamma_t(0)$, and, for $i=0,1,\cdots,m-1$, let $v_i=(\Upsilon^{-1}_t\tilde\Pi_{(X_t,V_t)}\Xi_t)u_i$. Then, the It\^o formula for $\tilde\rho(X_t,Y_t)$ is given by
	\begin{eqnarray}
		\begin{array}{rcl}
			\d\tilde\rho(X_t,Y_t)&=&\left(\Xi_tu_0\right)\tilde\rho(X_t,Y_t)\,\d\,\langle u_0,B_t\rangle+\left(\Upsilon_tv_0\right)\tilde\rho(X_t,Y_t)\,\d\,\langle v_0,B'_t\rangle\vspace{0.1cm}\\ 
			&&+\,\,\dfrac{1}{2}\displaystyle\sum_{i=0}^{m-1}\left(\Xi_tu_i+\Upsilon_tv_i\right)^2\tilde\rho(X_t,Y_t)\,\d t\\
			&&+\,\,\dfrac{1}{2}\left\{\langle\nabla\phi(X_t),\dot\gamma_t(0)\rangle-\langle\nabla\phi(Y_t),\dot\gamma_t(\tilde\rho(X_t,Y_t))\rangle\right\}\d t.
		\end{array}
		\label{eqn3a}%{eqn*}
	\end{eqnarray}
	Since $\langle u_0,B_t\rangle=\langle v_0,B'_t\rangle$, since $\left(\Xi_tu_0\right)\tilde\rho(X_t,Y_t)=-\left(\Upsilon_tv_0\right)\tilde\rho(X_t,Y_t)$ and since
	\[\left(\Xi_tu_0\right)^2\tilde\rho(X_t,Y_t)=\left(\Upsilon_tv_0\right)^2\tilde\rho(X_t,Y_t)=\left(\Xi_tu_0\right)\left(\Upsilon_tv_0\right)\tilde\rho(X_t,Y_t)=0,\]   
	\eqref{eqn3a} simplifies to
	\begin{eqnarray}
		\begin{array}{rcl}
			2\d\tilde\rho(X_t,Y_t)&=&\displaystyle\sum_{i=1}^{m-1}\left(\Xi_tu_i+\Upsilon_tv_i\right)^2\tilde\rho(X_t,Y_t)\,\d t \vspace{0.2cm} \\
			&&+\,\,\left\{\langle\nabla\phi(X_t),\dot\gamma_t(0)\rangle-\langle\nabla\phi(Y_t),\dot\gamma_t(\tilde\rho(X_t,Y_t))\rangle\right\}\d t.
		\end{array}
		\label{eqn3b}%{eqn**}
	\end{eqnarray}
	Denote by $J^i_t$ the Jacobi vector field along $\gamma_t$ with $J^i_t(0)=\Xi_tu_i$ and $J^i_t(1)=\Upsilon_tv_i$. Then, since $\tilde\rho$ is smooth under the assumption that $(X_t,Y_t)$ lies in a given neighbourhood of $(x_0,y_0)$, using the second-variation formula (see \cite{CE}), a modification of the argument by \cite{WSK1} shows that the right hand side of \eqref{eqn3b} is given by
	\begin{eqnarray}
		\begin{array}{rcl}
			&&\displaystyle\int_0^{\tilde\rho(X_t,Y_t)}\displaystyle\sum_{i=1}^{m-1}\!\left\{|D_{\dot\gamma_t(s)}(J^i_t(s))|^2\!\!-\!\langle R(J^i_t(s),\dot\gamma_t(s))\dot\gamma_t(s),J^i_t(s)\rangle\right\}\d s\d t \vspace{0.2cm}\\
			&&\quad+\,\,\left\{\langle\nabla\phi(X_t),\dot\gamma_t(0)\rangle-\langle\nabla\phi(Y_t),\dot\gamma_t(\tilde\rho(X_t,Y_t))\rangle\,\d t\right\},
		\end{array}
		\label{eqn3}
	\end{eqnarray}
	where the integral is along $\gamma_t$ and $R$ denotes the curvature tensor of $\M$. 
	
	To analyse the first term of \eqref{eqn3}, we use a modified form of the argument in \cite{CE}, the proof of Lemma 1.21. It shows that, for each $i=1,\ldots,m-1$,
	\begin{eqnarray*}
		&&\int_0^{\tilde\rho(X_t,Y_t)}\left\{|D_{\dot\gamma_t(s)}(J^i_t(s))|^2-\langle R(J^i_t(s),\dot\gamma_t(s))\dot\gamma_t(s),J^i_t(s)\rangle\right\}\d s\\
		&\leqslant&\int_0^{\tilde\rho(X_t,Y_t)}\left\{|D_{\dot\gamma_t(s)}(V^i_t(s))|^2-\langle R(V^i_t(s),\dot\gamma_t(s))\dot\gamma_t(s),V^i_t(s)\rangle\right\}\d s,
	\end{eqnarray*}
	where $V^i_t(s):=(\tilde\Pi_{(X_t,sV_t/|V_t|)}\Xi_t)u_i$. Now, since $V^i_t$ is parallel along $\gamma_t$, it follows that $D_{\dot\gamma_t(s)}(V^i_t(s))=0$. As a consequence, since $\{\dot\gamma_t(s),V^1_t(s),\ldots,V^{m-1}_t(s)\}$ forms an orthonormal base of $T_{\gamma_t(s)}(\M)$ and $\langle R(\dot\gamma_t(s),\dot\gamma_t(s))\dot\gamma_t(s),\dot\gamma_t(s)\rangle\equiv0$, we have
	\begin{eqnarray}
		\begin{array}{rcl}
			&&\displaystyle\int_0^{\tilde\rho(X_t,Y_t)}\displaystyle\sum_{i=1}^{m-1}\left\{|D_{\dot\gamma_t(s)}(J^i_t(s))|^2-\langle R(J^i_t(s),\dot\gamma_t(s))\dot\gamma_t(s),J^i_t(s)\rangle\right\}\d s\\
			&\leqslant&-\displaystyle\int_0^{\tilde\rho(X_t,Y_t)}\displaystyle\sum_{i=1}^{m-1}\langle R(V^i_t(s),\dot\gamma_t(s))\dot\gamma_t(s),V^i_t(s)\rangle\,\d s\\
			&=&-\displaystyle\int_0^{\tilde\rho(X_t,Y_t)}\hbox{Ric}(\gamma_t(s))(\dot\gamma_t(s),\dot\gamma_t(s))\,\d s.
		\end{array}
		\label{eqn4}
	\end{eqnarray}

	For the remaining two terms of \eqref{eqn3}, we note that
	\begin{eqnarray*}
		\frac{\d}{\d s}\langle\nabla\phi(\gamma_t(s)),\dot\gamma_t(s)\rangle
		&=&\langle D_{\dot\gamma_t(s)}(\nabla\phi(\gamma_t(s))),\dot\gamma_t(s)\rangle+\langle\nabla\phi(\gamma_t(s)),D_{\dot\gamma_t(s)}\dot\gamma_t(s)\rangle\\
		&=&\langle D_{\dot\gamma_t(s)}(\nabla\phi(\gamma_t(s))),\dot\gamma_t(s)\rangle\\
		&=&\hbox{Hess}^\phi(\dot\gamma_t(s),\dot\gamma_t(s)),
	\end{eqnarray*}
	as $\gamma_t$ is a geodesic. From this, we deduce that
	\begin{eqnarray}
		\begin{array}{rcl}
			\langle\nabla\phi(Y_t),\dot\gamma_t(\tilde\rho(X_t,Y_t))\rangle-\langle\nabla\phi(X_t),\dot\gamma_t(0)\rangle \vspace{0.2cm}
			&=&\displaystyle\int_0^{\tilde\rho(X_t,Y_t)}\hbox{Hess}^\phi(\dot\gamma_t(s),\dot\gamma_t(s))\,\d s.
		\end{array}
		\label{eqn5}
	\end{eqnarray}

	Thus, under the Bakry-Emery curvature criterion (A1) condition, \eqref{eqn3}, \eqref{eqn4} and \eqref{eqn5} together give that
	\begin{eqnarray}
		\begin{array}{rcl}
			2\d\tilde\rho(X_t,Y_t)\vspace{0.2cm}
			&\leqslant&-\displaystyle\int_0^{\tilde\rho(X_t,Y_t)}\!\!\left\{\hbox{Ric}(\dot\gamma_t(s),\dot\gamma_t(s))\!+\!\hbox{Hess}^\phi(\dot\gamma_t(s),\dot\gamma_t(s))\right\}\d s\,\d t\vspace{0.2cm}\\
			&\leqslant&-\,\,2\kappa\tilde\rho(X_t,V_t)\,\d t.
		\end{array}
		\label{eqn6}
	\end{eqnarray}
	Now, for any $\ell\geqslant1$, it follows from \eqref{eqn6} that 
	\[\d\left(\tilde\rho(X_t,Y_t)^\ell\right)\leqslant-\ell\,\kappa\,\tilde\rho(X_t,Y_t)^\ell\d t,\]
	so that
	\begin{eqnarray*}
		e^{\ell\kappa t}\tilde\rho(X_t,Y_t)^\ell&=&\tilde\rho(X_0,Y_0)^\ell+\int_0^t e^{\ell\kappa s}\left\{\ell\kappa\,\tilde\rho(X_s,Y_s)^\ell\d s+\d\left(\tilde\rho(X_s,Y_s)^\ell\right)\right\}\\
		&\leqslant&\tilde\rho(X_0,Y_0)^\ell.
	\end{eqnarray*}  
	Finally, by recalling that $\tilde\rho(X_0,Y_0)=\rho(X_0,Y_0)$, we have
	\[\rho(X_t,Y_t)^\ell\leqslant\tilde\rho(X_t,Y_t)^\ell\leqslant\rho(X_0,Y_0)^\ell e^{-\ell\kappa t}\]
	as required.
\end{proof}

\subsection{When conjugate points are present in cut loci}
\label{conjugate}
When conjugate points are present in cut loci in $\M$, the construction of a pair of diffusions in the proof of Theorem \ref{thm1a} fails at such points. More precisely, if $y_0$ is a (first) conjugate point of $x_0$ along the geodesic $\exp_{x_0}(sv)$, which also lies in the cut locus of $x_0$, then $D\exp_{x_0}(v)$ is singular. This means that it would be impossible to find a neighbourhood $\mathcal N$ of $(x_0,y_0)$ that has the properties described above following \eqref{eqn2a}. In particular, it would be impossible to find a subset of $\TM$, as specified there, such that Exp is a diffeomorphism from that subset onto $\mathcal N$. It is evident from the proof of Theorem \ref{thm1a} that the existence of such a diffeomorphism offers a way to couple $(X_{x,t},Y_{y,t})$ at, and beyond, cut points.

Nevertheless, we now show that it is still possible to construct a pair of diffusions on $\M$ with properties that ($\mathrm{i}$) they both satisfy \eqref{eqn1} and ($\mathrm{ii}$) the \emph{expected} distance between them contracts at least exponentially. This relies on a generalisation of the technique used in Theorem 5 of \cite{WSK2} to deal with the presence of conjugate points. In the non-conjugate part of the cut locus of $\M$ analysis proceeds as with Theorem \ref{thm1a}. To warn us of when the diffusions get close to the first conjugate locus, we use the operator $L_\phi$, and monitor the value of its action on the distance function $\rho$; this value decays towards $-\infty$ when the points approach the first conjugate locus.  Effectively, we determine a neighbourhood $N_{2\delta}\subset \M \times \M$ of the first conjugate locus in $\M\times\M$ for a constant $\delta$ that depends on $\kappa$ and the injectivity radius of $\M$. Once the coupled diffusions enter $\bar N_{2\delta}$, the closure of $N_{2\delta}$, we decouple them, run independent diffusions until they hit $\M\setminus N_\delta$, where $N_\delta\supset N_{2\delta}$, and then return to coupling again. 

We first need two preliminary results before stating and proving the main result in this section. Observe that the set
\begin{eqnarray*}
	\tilde{\mathcal E}:=\{(x,v)\in\TM\,\mid&\,\hbox{the geodesic }\exp_x(sv),\,0\leqslant s\leqslant1,\\
	&\,\hbox{contains no conjugate point of }x\}
\end{eqnarray*}
is an open set in $\TM$. The map Exp$:(x,v)\rightarrow(x,\exp_x(v))$ maps $\tilde{\mathcal E}$ surjectively to its image
\begin{eqnarray}
	\begin{array}{rl}
		\mathcal E:=\{(x,y)\in\M\times\M\,\mid&\hbox{there is a geodesic from $x$ to $y$}\\
		&\hbox{containing no conjugate point}\}.
	\end{array}
	\label{eqn2b}
\end{eqnarray}
Then, the construction \eqref{eqn8} of $(X_t,Y_t)$ can be applied to the case when the starting point $(x_0,y_0)$ is in $\mathcal E$ and it remains valid until the first exit of $(X_t,V_t)$ from $\tilde{\mathcal E}$. We now modify the construction by \cite{WSK2}: combine the coupled diffusions $(X_t,Y_t)$ defined by \eqref{eqn8}, while the corresponding $(X_t,V_t)$ is not too close to the boundary of $\tilde{\mathcal E}$, with $X_t$, $Y_t$ evolving independently.

For this, we first need a result on the distance function of two independent diffusions on $\M$ specified by \eqref{eqn1}. Lemma 3 in Appendix A of the Supplementary Material ensures the following property of $\rho(x,y)$ on neighbourhoods of the cut locus
\[\mathcal C:=\{(x,y)\in\M\times\M\mid y\hbox{ lies in the cut locus of }x\}\]
of $\M\times\M$: there is a set $\mathcal C_0\subset\mathcal C$ such that 
\begin{enumerate}[label=(\roman*), leftmargin=*]
	%\item[($i$)] 
	\item $\mathcal C_0$ contains the (first)-conjugate part of $\mathcal C$;%\supset\mathcal C\cap\mathcal E^c$;
	%\item[($ii$)] 
	\item for any $(x,y)\in\mathcal C\setminus\mathcal C_0$, there is a neighbourhood $\mathcal N$ of $(x,y)$ in $\M\times\M$ and two smooth functions $\varrho_1$ and $\varrho_2$ on $\mathcal N$ such that 
	\[\rho(x',y')=\min\{\varrho_1(x',y'),\varrho_2(x',y')\},\qquad\forall(x',y')\in\mathcal N.\] 
\end{enumerate}
Since the (first)-conjugate part of $\mathcal C$ has co-dimension 2 in $\M\times\M$ (see \cite{BL1}), the result of that Lemma also implies that $\mathcal C_0$ can be chosen to have co-dimension 2. Also, similarly to the argument at the beginning of the proof of Theorem 1,  $\mathcal N$ in $(ii)$ above can be chosen such that $\text{Exp}^{-1}(\mathcal N)$ is a disjoint union of two open sets $\mathcal V_1$, $\mathcal V_2$ in $\TM$ and, restricted to each $\mathcal V_i$, Exp is a diffeomorphism from that set to $\mathcal N$. Then, the smooth function $\varrho_i(x',y')$ constructed in the proof of Lemma  3 in Appendix A of the Supplementary Material is in fact the length of the geodesic from $x'$ and $y'$, the initial tangent vector $v_i$ to which lies in $\mathcal V_i$. That is, using our notation for the length of intervening geodesics, we have $\varrho_i(x',y')=\tilde\rho(x',\exp_{x'}(v_i))$. This leads to the following generalisation of Theorem 5 of \cite{WSK2} and of Theorem 3 of \cite{BL1}. The proof of this generalisation is a slight modification of the proof for Theorem 3 of \cite{BL1} (see also \cite{LB} for more detailed derivations), and we hence omit it here.

\begin{lemma}
	Suppose that $X_t$ and $Y_t$ are independent diffusions on $\M$, both satisfying \eqref{eqn1}. Then, the distance $\rho(X_t,Y_t)$ is a semimartingale and, before the first time that $X_t=Y_t$,
	\[\d\rho(X_t,Y_t)=\sqrt2\d B_t+\frac{1}{2}\left\{\mathcal L_{\phi,1}\rho(X_t,Y_t)+\mathcal L_{\phi,2}\rho(X_t,Y_t)\right\}\d t-\d L_t,\]
	where $B_t$ is a Brownian motion on $\mathbb R$; $L$ is a non-decreasing process that is locally constant outside $\mathcal C$; and, for fixed $x_0$ and $x\not=x_0$,
	\[\mathcal L_{\phi,1}\rho(x,x_0)\!:=\!\!\left\{
	\begin{array}{ll}
		0&\hbox{if }(x,x_0)\in\mathcal C_0;\\
		\!\!\!\dfrac{1}{2}\{L_\phi\tilde\rho(\exp_{x_0}\!(v_1),x_0)\!+\!L_\phi\tilde\rho(\exp_{x_0}\!(v_2),x_0)\}&\hbox{if  }(x,x_0)\in\mathcal C\!\setminus\!\mathcal C_0;\\
		L_\phi\rho(x,x_0)&\hbox{otherwise},
	\end{array}
	\right.\]
	and $\mathcal L_{\phi,2}\rho$ is similarly defined with respect to the second argument of $\rho$, and where the operator $L_\phi$ is defined by $L_\phi=\frac{1}{2}\{\Delta-\langle\nabla\phi,\,\nabla\rangle\}$.
	\label{lem5}
\end{lemma}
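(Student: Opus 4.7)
The plan is to dissect $\M\times\M$ into three regions and analyse $\rho(X_t,Y_t)$ on each separately: the open complement of the cut locus $\M\times\M\setminus\mathcal{C}$, the non-conjugate part of the cut locus $\mathcal{C}\setminus\mathcal{C}_0$, and the exceptional set $\mathcal{C}_0$ that contains the conjugate locus. The martingale part of $\d\rho(X_t,Y_t)$ will take the same form across all three regions, while the bounded-variation part will fall into the three piecewise cases in the statement.

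On the open complement of $\mathcal{C}$, the function $\rho$ is smooth and Itô's formula applies. Writing \eqref{eqn1} in the Stratonovich form \eqref{eqn1a} for each of the independent diffusions $X_t$ and $Y_t$, the martingale part of $\d\rho(X_t,Y_t)$ has quadratic variation
\[
|\nabla_x\rho(X_t,Y_t)|^2\,\d t + |\nabla_y\rho(X_t,Y_t)|^2\,\d t = 2\,\d t,
\]
since the Riemannian gradient of $\rho$ in each argument is a unit tangent vector along the minimising geodesic. By the Lévy characterisation, this martingale may be represented as $\sqrt{2}\,\d B_t$ for a one-dimensional standard Brownian motion $B_t$. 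The drift is exactly $\tfrac{1}{2}\{L_\phi\rho(\cdot,Y_t)(X_t)+L_\phi\rho(X_t,\cdot)(Y_t)\}\,\d t$, matching the third case of $\mathcal{L}_{\phi,1}\rho$ and $\mathcal{L}_{\phi,2}\rho$.

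Near a point in $\mathcal{C}\setminus\mathcal{C}_0$, Lemma \ref{lem3} provides two smooth intervening-geodesic-length functions $\varrho_1,\varrho_2$ with $\rho=\min(\varrho_1,\varrho_2)$. Applying Itô's formula to each $\varrho_i$ and then the Itô--Tanaka identity
\[
\d\min(\varrho_1,\varrho_2)=\tfrac{1}{2}(\d\varrho_1+\d\varrho_2)-\tfrac{1}{2}\,\d|\varrho_1-\varrho_2|
\]
decomposes the drift into the average of the two smooth drifts, which is precisely the second case of $\mathcal{L}_{\phi,i}\rho$ in the statement, plus a nondecreasing contribution $-\d L_t$ arising from $|\varrho_1-\varrho_2|$ and supported on the locus $\{\varrho_1=\varrho_2\}\subset\mathcal{C}$. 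Because the gradients of $\varrho_1$ and $\varrho_2$ in each argument have unit norm, the quadratic variation of the martingale part still equals $2\,\d t$, so the representation $\sqrt{2}\,\d B_t$ extends consistently across neighbourhoods. This step mirrors the treatment in \cite{BL1}, adjusted for the $L_\phi$ drift in place of pure Brownian motion.

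Finally, $\mathcal{C}_0$ has codimension two in the $2m$-dimensional product $\M\times\M$, and the joint diffusion $(X_t,Y_t)$ has uniformly elliptic generator on $\M\times\M$; standard potential-theoretic arguments then show that $\{t:(X_t,Y_t)\in\mathcal{C}_0\}$ has Lebesgue measure zero almost surely. Consequently, declaring $\mathcal{L}_{\phi,i}\rho\equiv 0$ on $\mathcal{C}_0$ does not alter the drift integral, and the local descriptions patch globally via the strong Markov property to yield the stated semimartingale decomposition. The main obstacle is precisely this assembly: one must check that the local $\tfrac{1}{2}|\varrho_1-\varrho_2|$-local times from overlapping neighbourhoods in $\mathcal{C}\setminus\mathcal{C}_0$ agree on overlaps so that $L_t$ is a single nondecreasing process, and that no singular drift is accumulated from excursions that approach $\mathcal{C}_0$ — a point which rests on the codimension-two property of $\mathcal{C}_0$ together with the ellipticity of $L_\phi$ on $\M\times\M$.
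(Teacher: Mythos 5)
Your overall plan --- stratify $\M\times\M$ into $\mathcal{C}_0$, $\mathcal{C}\setminus\mathcal{C}_0$ and the complement of $\mathcal{C}$, express $\rho$ locally as a minimum of the smooth radial functions from Lemma \ref{lem3}, apply Tanaka's formula for the minimum, and dispose of $\mathcal{C}_0$ by polarity of a codimension-two set --- is exactly the route the paper points to in citing \cite{BL1} and \cite{LB}, and the quadratic-variation argument giving $\sqrt{2}\,\d B_t$ (each $\varrho_i$ has unit-norm gradients in each argument, and the independence of $X_t$, $Y_t$ kills the cross-variation) is correct.

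There is, however, a concrete error in the drift bookkeeping near $\mathcal{C}\setminus\mathcal{C}_0$. Writing $\d\varrho_i=\d M_i+b_i\,\d t$ and applying Tanaka to $\varrho_1-\varrho_2$ gives $\d|\varrho_1-\varrho_2|=\mathrm{sgn}(\varrho_1-\varrho_2)\,\d(\varrho_1-\varrho_2)+\d L^0_t$, where $L^0$ is the local time at $0$. Substituting into $\d\min(\varrho_1,\varrho_2)=\tfrac{1}{2}(\d\varrho_1+\d\varrho_2)-\tfrac{1}{2}\d|\varrho_1-\varrho_2|$ shows that the finite-variation part of $\d\rho$ has drift density $b_1\mathbf{1}_{\{\varrho_1<\varrho_2\}}+b_2\mathbf{1}_{\{\varrho_1>\varrho_2\}}$ --- the drift of the \emph{active} branch, which away from $\mathcal{C}$ is $L_\phi\rho$ acting in each argument (the third case of $\mathcal{L}_{\phi,i}\rho$) --- together with the singular nondecreasing piece $-\tfrac{1}{2}\d L^0_t$. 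Your assertion that the Tanaka step ``decomposes the drift into the average of the two smooth drifts, \ldots plus a nondecreasing contribution $-\d L_t$'' drops the absolutely continuous part of $\mathrm{sgn}(\varrho_1-\varrho_2)\,\d(\varrho_1-\varrho_2)$; the resulting claimed decomposition is not the Doob--Meyer decomposition of $\rho(X_t,Y_t)$. The ``average'' is merely the value the lemma conventionally assigns to $\mathcal{L}_{\phi,i}\rho$ on the $\d t$-null set $\mathcal{C}\setminus\mathcal{C}_0$, not the drift density near that set. The fix is short, but as written the Tanaka computation does not reproduce the lemma.
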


To detect that the coupled $(X_t,Y_t)$, constructed by \eqref{eqn8}, is close to the boundary of $\mathcal E$ and to control the independent diffusions $X_t$ and $Y_t$, we need the following generalisation of a geometric description (see \cite{WSK2}), wherein we replace the Laplacian operator considered there with $\mathcal L_{\phi}$, and replace the lower bound constant $c$ determining the set $\mathcal O_c$ (which was denoted by $U_c$ by \cite{WSK2}) by $c\rho(x,y)$. Since $\phi$ is in $\mathcal C^2(\M)$, the proof for our result is analogous to that for the lemma in \cite{WSK2}, and we omit it here.

\begin{lemma}
	For any $c>0$,
	\[\mathcal O_c\subset\bar{\mathcal O}_c\subset\tilde{\mathcal E},\]
	where 
	\[\mathcal O_c:=\{(x,v)\in\tilde{\mathcal E}\mid\mathcal L_{\phi,1}\tilde\rho(x,\exp_x(v))+\mathcal L_{\phi,2}\tilde\rho(x,\exp_x(v))>-2c\rho(x,\exp_x(v))\}\]
	and, as before, $\tilde\rho(x,exp_x(v))$ denotes the length of the intervening geodesic $\gamma(t)=\exp_x(tv)$, $0\leqslant t\leqslant 1$. 
	\label{lem6}
\end{lemma}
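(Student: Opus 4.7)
The first inclusion $\mathcal O_c\subseteq\bar{\mathcal O}_c$ is tautological; the substantive content is that $\bar{\mathcal O}_c$ remains inside the open set $\tilde{\mathcal E}$. I would argue by contradiction. Suppose $(x_0,v_0)\in\bar{\mathcal O}_c\setminus\tilde{\mathcal E}$, so that the geodesic $s\mapsto\exp_{x_0}(sv_0)$ contains a point conjugate to $x_0$. Pick a sequence $(x_n,v_n)\in\mathcal O_c$ with $(x_n,v_n)\to(x_0,v_0)$ and set $y_n:=\exp_{x_n}(v_n)$. The strategy is to show
\[\mathcal L_{\phi,1}\tilde\rho(x_n,y_n)+\mathcal L_{\phi,2}\tilde\rho(x_n,y_n)\longrightarrow-\infty,\]
contradicting the defining lower bound $-2c\,\rho(x_n,y_n)$, since $\rho(x_n,y_n)$ remains bounded as $n\to\infty$.

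To isolate the blow-up, decompose $L_\phi=\tfrac{1}{2}(\Delta-\langle\nabla\phi,\nabla\cdot\rangle)$. The drift contribution $\langle\nabla\phi,\nabla_i\tilde\rho\rangle$ is uniformly bounded on a compact neighbourhood of $(x_0,v_0)$, because $\nabla\phi$ is Lipschitz (hence locally bounded) and $|\nabla_i\tilde\rho|\leq 1$ in each argument. The task therefore reduces to proving $\Delta_1\tilde\rho(x_n,y_n)+\Delta_2\tilde\rho(x_n,y_n)\to-\infty$. Using the second variation formula (as in the proof of Theorem \ref{thm1a}), the trace of the Hessian of $\tilde\rho$ in either argument is given by an index form on $(m-1)$ Jacobi fields along the intervening geodesic with prescribed boundary conditions; as the endpoint of that geodesic approaches a point conjugate to the other endpoint, the minimizing Jacobi fields degenerate and the index form decreases without bound. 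Equivalently, the Jacobian determinant of $\exp_{x_n}$ at $v_n$ tends to zero, and the corresponding logarithmic derivative, which appears in the explicit expression for $\Delta\tilde\rho$, diverges to $-\infty$.

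The main obstacle is controlling this blow-up uniformly while both $x_n$ and $v_n$ vary simultaneously. I would handle this by fixing normal coordinates near $x_0$ and invoking continuous dependence of solutions to the Jacobi equation on initial data: the map $(x,v)\mapsto\det d\exp_x(v)$ is continuous on $T\M$ and vanishes exactly on the boundary of $\tilde{\mathcal E}$, hence tends to zero along our sequence. Feeding this into the explicit formula for $\Delta\tilde\rho$ and combining with the bounded drift correction yields the desired divergence. The whole argument is a mild modification of the lemma on p.~121 of \cite{WSK2}, the only new ingredient being the verification that the $\nabla\phi$ term leaves the blow-up intact; this justifies the authors' choice to omit the detailed proof.
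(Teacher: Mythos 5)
Your proof captures exactly what the paper leaves implicit: the authors omit the argument, noting only that since $\phi\in\mathcal C^2(\M)$ the proof is analogous to Kendall's lemma on p.~121 of \cite{WSK2}, which is precisely your observation that the drift contribution $\tfrac{1}{2}\langle\nabla\phi,\nabla\tilde\rho\rangle$ is locally bounded while the divergence of the Laplacian part near the first conjugate locus persists unchanged. Two minor imprecisions, neither of which affects the conclusion: the zero set of $(x,v)\mapsto\det d\exp_x(v)$ is the full (not only first) conjugate locus, so it is a proper superset of $\partial\tilde{\mathcal E}$; and passing from $\det d\exp_{x_n}(v_n)\to 0$ to the blow-up of its radial logarithmic derivative uses that the Jacobian vanishes to finite order along a geodesic, a standard consequence of uniqueness for the Jacobi equation, rather than following from $\det\to 0$ alone.
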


We are now ready to prove the following result for Riemannian manifolds $\M$ with non-empty conjugate locus (e.g., spheres), which is weaker than Theorem \ref{thm1a} in that the exponential contraction between the diffusions towards their initial points is in expection and not pathwise. 

\begin{theorem}
	Assume that the  Bakry-Emery curvature criterion (A1) holds for a constant $\kappa>0$. Then, for any $\ell\geqslant1$ and for any $x_0,y_0\in\M$, there is a pair of diffusions $(X_t,Y_t)$ starting from $(x_0,y_0)$ such that both $X_t$ and $Y_t$ satisfy \eqref{eqn1} and
	\begin{eqnarray}
		\E{\rho(X_t,Y_t)^\ell}\leqslant\rho(x_0,y_0)^\ell e^{-\ell\kappa t},\qquad t\geqslant0.
		\label{eqn7}
	\end{eqnarray}
	\label{thm4}
\end{theorem}

Note that, unlike the result of Theorem \ref{thm1a}, the $(X_t,Y_t)$ constructed here will depend on $\ell$. 
\begin{proof}
	Let $\kappa>0$ be the constant in Bakry-Emery curvature criterion (A1)  For given $\ell\in[1,n]$, fix $\delta_n>0$ sufficiently large such that
	\begin{enumerate} [label=(\roman*), leftmargin=*]
		\item $\delta_n>\kappa+4(n-1)/r_0^2$, where $r_0>0$ is the minimum of the injectivity radius and a fixed positive constant $r_0'$ say;
		\item $O_{\delta_n}\supset\{(x,y)\in\M\times\M\mid\rho(x,y)<r_0/2\}$, where $O_\delta=\text{Exp}\left(\mathcal O_\delta\right)$ and where $\mathcal O_c$ is the subset of $\TM$ as defined in Lemma \ref{lem6} above.
	\end{enumerate}

	We now construct diffusions $X_t$ and $Y_t$, both satisfying \eqref{eqn1}, as follows. For given $(x_0,y_0)\in\M\times\M$, if there is a minimal geodesic between them which contains no conjugate point, we construct diffusions $X_t$ and $Y_t$ by solving \eqref{eqn8} beginning at $(x_0,y_0)$. By allowing the corresponding $(X_t,V_t)$ to jump if necessary, as commented following the construction \eqref{eqn8}, we continue such a construction for $(X_t,Y_t)$ until the first time that $(X_t,V_t)$ leaves $\mathcal O_{2\delta_n}$. Suppose that $(X_t,V_t)$ leaves $\mathcal O_{2\delta_n}$ at time $\tau$. We then consider all minimal geodesics between $X_\tau$ and $Y_\tau$ containing no conjugate point and, if possible, choose one for which the corresponding $(X_\tau,V_\tau)$ lies in $\bar{\mathcal O}_{\delta_n}$. We then repeat the construction as before with the chosen new starting point. This iterated construction continues until the choice of such $(X_\tau,V_\tau)$ in $\bar{\mathcal O}_{\delta_n}$ is no longer possible.
	
	If it is not possible initially to choose a minimal geodesic containing no conjugate point, or if at some stage a choice of the above $(X_\tau,V_\tau)$ in $\bar{\mathcal O}_{\delta_n}$ is impossible, then we continue the construction of $X_t$ and $Y_t$ by evolving them independently until $(X_t,Y_t)$ hits $\bar O_{\delta_n}$. 
	
	To show that the required result holds for $(X_t,Y_t)$ constructed in such a way, it is sufficient by Theorem 1 to restrict to the case when $X_t$ and $Y_t$ evolve independently. Then, $(X_t,Y_t)$ is not in $\bar O_{\delta_n}$. Recalling that a co-dimension 2 set in $\M\times\M$ is a polar set of a non-degenerate diffusion on $\M\times\M$ it follows from Lemmas \ref{lem5} and \ref{lem6} and from the choice of $\delta_n$ that 
	\begin{eqnarray*}
		&&\d\left(e^{\ell\kappa t}\rho(X_t,Y_t)^\ell\right)\\
		&\leqslant&\d M_t+\ell\kappa e^{\ell\kappa t}\rho(X_t,Y_t)^\ell\d t\\
		&&+\,\,\frac{1}{2}\ell e^{\ell\kappa t}\rho(X_t,Y_t)^{\ell-1}\left\{\mathcal L_{\phi,1}\rho(X_t,Y_t)+\mathcal L_{\phi,2}\rho(X_t,Y_t)\right\}\d t\\
		&&+\,\,\ell(\ell-1)\,e^{\ell\kappa t}\rho(X_t,Y_t)^{\ell-2}\d t\\
		&\leqslant&\d M_t+\ell e^{\ell\kappa t}\rho(X_t,Y_t)^\ell\left\{\kappa-\delta_n+(\ell-1)\rho(X_t,Y_t)^{-2}\right\}\d t\\
		&\leqslant&\d M_t+\ell e^{\ell\kappa t}\rho(X_t,Y_t)^\ell\left\{\kappa-\delta_n+4(n-1)/r_0^2\right\}\d t\\
		&\leqslant&\d M_t,
	\end{eqnarray*}
	where $M_t$ is a martingale. Hence, we have $\E{\rho(X_t,Y_t)^\ell}\leqslant\rho(x_0,y_0)^\ell e^{-\ell\kappa t}$ as required.
\end{proof}

\begin{remark}
In the literature, there are several ways to construct couplings for proving the existence of contractivity. For example, in the curvature setting, the framework of weighted Riemannian manifolds is now part of a broader one for CD-spaces (see e.g., \cite{KTS, KTS2}). In this context, the existence of contractive couplings was treated by \cite{KK, VRTS}. In particular, the Kuwada duality theorem (see \cite{KK}, Theorem 2.2), in conjunction with the implication of contractivity of the heat flow under Curvature-Dimension condition, implies the existence of a contractive coupling such as in the proof of Corollary 1 in \cite{VRTS}. The coupling we construct here, in addition to proving the required contractivity, will also be employed in the Supplementary Material to study certain stochastic vector fields along the paths $X_{x,t}$ and $Y_{y,t}$, which play important roles in obtaining the Stein factors.
\end{remark}

\section{Solution to the Stein equation and Stein factors}
\label{Stein_equation}
\label{equation}
We are now ready to turn our attention to the \emph{Stein equation}
\begin{equation}
	\label{stein_eq}
h(x)-\E{h(X)}=L_\phi f_h(x),
\end{equation}
where $h$ belongs to a suitable class of real-valued test functions on $\M$.  Using the distance process $\rho(X_{x,t},Y_{y,t})$ for a pair of diffusions $(X_{x,t}, Y_{y,t})$ constructed above, in this Section we determine the solution $f_h$ to the Stein equation \eqref{stein_eq} and examine its properties.

\subsection{The solution $f_h$}
Let
\begin{equation}
\label{H_0}
\mathcal H_0:=\{h\in\mathcal C_0(\M)\mid h\hbox{ is Lipschitz with }C_0(h) <\infty\}.
\end{equation}
\begin{proposition}
	Let $\M$ be a complete and connected Riemannian manifold. Assume that the Bakry-Emery curvature criterion (A1) holds for a constant $\kappa>0$ and that $X$ is a random variable on $\M$ with distribution $\mu_\phi$ such that $\E{\rho(X,x)}<\infty$ for some $x\in\M$.
	For every $h \in \mathcal H_0$ the function 
	\begin{equation}
		f_h(x):=\int_0^\infty\left\{\E{h(X)}-\E{h(X_{x,t})}\right\}\d t
		\label{eqn10}
	\end{equation}
	is (i) well-defined; (ii) Lipschitz with constant $C_0(f_h)\leqslant C_0(h)/\kappa$.
	\label{prop5}
\end{proposition}

\begin{remark}
If $\M=\mathbb R^m$, $\text{Ric}(u, u')+\hbox{Hess}^\phi(u,u')=\hbox{Hess}^\phi(u,u')$. Thus, Proposition \ref{prop5}(ii) recovers the corresponding result in \cite{MG}, as the constant $2\kappa$ here corresponds to constant $k$ there. Moreover, the result of Proposition \ref{prop5}(ii) is equivalent to that of Proposition 6.1 in \cite{JT}.
 \label{remark2}
\end{remark}

\begin{proof}
	Let $(X_{x,t},Y_{y,t})$ be the pair of diffusions in Theorem \ref{thm4} with $\ell=1$, starting from $(x,y)$. Then, both $X_{x,t}$ and $Y_{y,t}$ satisfy \eqref{eqn1}. Since $\mu_\phi$ is the invariant measure for $Y_t$, using the Lipschitz property of $h$ and Theorem \ref{thm4},
	\begin{eqnarray*}
		&&\left|\int_0^\infty\left\{\E{h(X)}-\E{h(X_{x,t})}\right\}\d t\right|\\
		&=&
		\left|\int_0^\infty\int_{\M}\left\{\E{h(Y_{y,t})}-\E{h(X_{x,t})}\right\}\d\mu_\phi(y)\d t\right|\\
		&\leqslant&C_0(h)\int_0^\infty\int_{\M}\E{\rho(X_{x,t},Y_{y,t})}\,\d\mu_\phi(y)\,\d t\\
		&\leqslant&C_0(h)\E{\rho(X,x)}\int_0^\infty e^{-\kappa t}\d t<\infty.
	\end{eqnarray*}
	This proves that $f_h$ is well-defined. Now, for any $x,y\in\M$,
	\begin{eqnarray*}
		|f_h(y)-f_h(x)|&\leqslant&\int_0^\infty\left|\E{h(Y_{y,t})}-\E{h(X_{x,t})}\right|\d t\\
		&\leqslant&C_0(h)\int_0^\infty\E{\rho(X_{x,t},Y_{y,t})}\d t\\
		&\leqslant&C_0(h)\rho(x,y)\int_0^\infty e^{-\kappa t}\d t=\frac{1}{\kappa}C_0(h)\rho(x,y).
	\end{eqnarray*}
\end{proof}

The next result shows that the function $f_h$ defined by \eqref{eqn10} solves the Stein equation for the probability measure $\mu_\phi$. 

\begin{theorem}
	Assume that $\M$ is a complete and connected Riemannian manifold and that Bakry-Emery curvature criterion (A1) holds for a constant $\kappa>0$. Let $X$ be a random variable on $\M$ with distribution $\mu_\phi$ such that $\E{\rho(X,x)}<\infty$ for some $x\in\M$. For $h\in \mathcal H_0$, the function $f_h$ in \eqref{eqn10} solves the Stein equation \eqref{stein_eq}. 
	\label{thm2}
\end{theorem}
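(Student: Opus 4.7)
The plan is to identify $f_h$ via the transition semigroup $P_t$ of the diffusion $X_{x,t}$ and exploit the semigroup identity. Writing $P_th(x)=\E{h(X_{x,t})}$, equation \eqref{eqn10} becomes
$$f_h(x)=\int_0^\infty\left\{\E{h(X)}-P_th(x)\right\}\d t.$$
Proposition \ref{prop5} already establishes that this integral converges absolutely for each $x$; indeed, Theorem \ref{thm4a} with $\ell=1$ yields the pointwise bound $|\E{h(X)}-P_th(x)|\leqslant C_0(h)\,e^{-\kappa t}\,\E{\rho(X,x)}$, which will also drive every exchange of limit and integration that follows.

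First I would apply $P_s$ to $f_h$. Since $P_s$ fixes constants and satisfies $P_sP_t=P_{s+t}$ by the Markov property, Fubini (justified by the above exponential bound) gives
$$P_sf_h(x)=\int_0^\infty\left\{\E{h(X)}-P_{t+s}h(x)\right\}\d t=\int_s^\infty\left\{\E{h(X)}-P_uh(x)\right\}\d u,$$
after the substitution $u=t+s$. Subtracting yields the telescoping identity
$$f_h(x)-P_sf_h(x)=\int_0^s\left\{\E{h(X)}-P_th(x)\right\}\d t,$$
valid for every $s\geqslant0$ and $x\in\M$.

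Dividing by $s$ and letting $s\downarrow0$, the right-hand side converges to $\E{h(X)}-h(x)$ provided $t\mapsto P_th(x)$ is right-continuous at $t=0$; this follows because $h\in\mathcal C_0(\M)$ is bounded and $X_{x,t}\to x$ almost surely as $t\downarrow0$ by pathwise continuity of the diffusion, so bounded convergence applies. By definition of the infinitesimal generator, the left-hand side tends to $-L_\phi f_h(x)$. Therefore $L_\phi f_h(x)=h(x)-\E{h(X)}$, which is the desired Stein equation.

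The main obstacle I anticipate is aligning the semigroup-theoretic generator with the differential operator $L_\phi=\frac12\{\Delta-\langle\nabla\phi,\nabla\,\cdot\,\rangle\}$ appearing in the statement. The identity derived above says $f_h$ lies in the domain of the Feller generator and that its value there equals $h(x)-\E{h(X)}$; to read this as an honest PDE one needs enough regularity of $f_h$ (at least $\mathcal C^2$) so that the generator coincides with the differential operator. This can be obtained by invoking the uniform ellipticity of $L_\phi$ together with interior Schauder estimates applied to the bounded right-hand side, or, alternatively, by approximating $h$ by smooth compactly supported functions for which the standard It\^o argument directly yields $f_h\in\mathcal C^2$ and then passing to the limit using the Lipschitz bound of Proposition \ref{prop5}.
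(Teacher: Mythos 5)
Your argument is correct in its essentials but follows a different semigroup route from the paper's. The paper invokes the Ethier--Kurtz identity (their Prop.~1.5), valid for any $h\in\mathcal C_0(\M)$,
\[
(P_th)(x)-h(x)=L_\phi\Bigl(\int_0^t P_sh(x)\,\d s\Bigr),
\]
centres $h$ by $\E{h(X)}$, and lets $t\to\infty$: the left side converges to $\E{h(X)}-h(x)$ using the coupling bound of Theorem~\ref{thm4a}, and the right side converges to $-L_\phi f_h(x)$ by dominated convergence. You instead derive the resolvent-type identity $f_h-P_sf_h=\int_0^s\{\E{h(X)}-P_th\}\d t$ directly from the Markov property and Fubini, divide by $s$, and let $s\downarrow0$ to invoke the definition of the generator. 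The two routes have complementary virtues: yours is more elementary (it needs only the semigroup law and the $O(e^{-\kappa t})$ decay supplied by Theorem~\ref{thm4a}, not the Ethier--Kurtz lemma) and makes transparent that $f_h$ lies in the domain of the Feller generator; the paper's sidesteps the difference-quotient computation at the cost of commuting the differential operator with a $t\to\infty$ limit.

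Both proofs share the same unresolved subtlety, which you flag candidly in your closing paragraph: the abstract Feller/Hille--Yosida generator must be identified with the differential operator $\tfrac12\{\Delta-\langle\nabla\phi,\nabla\cdot\rangle\}$ acting on $f_h$, which requires $f_h\in\mathcal C^2$. The paper does not address this explicitly either (it writes $L_\phi$ as though the two are interchangeable). Your suggested repairs---interior Schauder estimates from uniform ellipticity, or approximation by smooth $h$---are both viable, and in fact the later Propositions~\ref{prop6a} and \ref{prop6} supply exactly the kind of $\mathcal C^2$ regularity for $f_h$ needed to close this gap under their extra hypotheses. So you have not missed anything the paper resolves; you have merely been more explicit about a point the paper leaves implicit.
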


\begin{remark}
\label{remark3}
When $\M=\mathbb R^m$ this result recovers the result by \cite{MG}; in particular, $\E{L_\phi f_h(X)}=0$. On the other hand, the Bakry-Emery curvature criterion (A1) implies certain restrictions on the probability measures to which we can apply Theorem \ref{thm2}. For example, as noted in Example \ref{examples}(i), one cannot apply it to von Mises-Fisher distributions on the circle.
In this case, using direct integration by parts, for probability measures $\mu_\phi$ with $X\sim \mu_\phi$ on $S^1$, the function 
	\[g_h(x)=c(\phi) e^{\phi(x)}\left\{a+\int_{-\pi}^x(h(y)-\E{h(X)})\d\mu_\phi(y)\right\},\] for a constant $a$,
	solves the Stein equation $h(x)-\E{h(X)}=g'_h(x)-\phi'(x)\,g_h(x)$ associated with first-order Stein operator $A_\phi g-\phi g=g'-\phi'\,g$ (see \cite{ALewis}). 
\end{remark}

\begin{proof}
	Let $X_{x,t}$ be a diffusion starting from $x$ and satisfying \eqref{eqn1}. Since the corresponding semigroup $\{P_t\mid t\geqslant 0\}$ is strongly continuous on $\mathcal C_0(\M)$ and $L_\phi$ is the infinitesimal generator of $X_{x,t}$, we have
	\[(P_th)(x)-h(x)=%\int_0^tL_\phi\left(\E{h(X_{x,s})}\right)\d s=
	L_\phi\left(\int_0^t\E{h(X_{x,s})}\d s\right)\] 
	for $h\in\mathcal C_0(\M)$ \citep[Prop. 1.5]{EK}. However, for $\tilde h(x)=h(x)+a$ where $a\in\mathbb R$, $\tilde h(x)-\E{\tilde h(X)}=h(x)-\E{h(X)}$. Then, by taking $a=\E{h(X)}$ and noting $L_\phi(a)=0$, we can also write the above as
	\begin{eqnarray}
		(P_th)(x)-h(x)=-L_\phi\left(\int_0^t\left\{\E{h(X)}-\E{h(X_{x,s})}\right\}\d s\right).
		\label{eqn12}
	\end{eqnarray} 

	Now, take $(X_{x,t},Y_{y,t})$ to be the pair of diffusions, starting from $(x,y)$, as Theorem \ref{thm4} with $\ell=1$. Since $Y_t$ satisfies \eqref{eqn1}, the fact that $\mu_\phi$ is the invariant measure of $Y_t$ gives that 
	\begin{eqnarray*}
		&&\big|\E{h(X)}-(P_th)(x)\big|=\left|\int_{\M}\left\{\E{h(Y_{y,t})}-\E{h(X_{x,t})}\right\}\d\mu_\phi(y)\right|\\
		&\leqslant& C_0(h)\int_{\M}\E{\rho(Y_{y,t},X_{x,t})}\d\mu_\phi(y)\leqslant C_0(h)\E{\rho(X,x)}e^{-\kappa t},
	\end{eqnarray*}
	where the last inequality follows from Theorem \ref{thm4} and where $C_0(h)$ is the Lipschitz constant for $h$. Thus, 
	\[\lim_{t\rightarrow\infty}(P_th)(x)=\E{h(X)}.\]
	On the other hand, the result of Theorem \ref{thm4} implies that we may apply the Dominated Convergence Theorem to obtain that, as $t\rightarrow\infty$, the right hand side of \eqref{eqn12} tends to $-L_\phi f_h(x)$, so that 
	\[h(x)-\E{h(X)}=L_\phi f_h(x)\]
	as required. 
\end{proof}

\subsection{Stein factors}
In the literature, Stein factors refer to bounds on solutions $f_h$ of the Stein equation \eqref{stein_eq}. 
A direct consequence of Proposition \ref{prop5} and Theorem \ref{thm2} is that $f_h$ defined by \eqref{eqn10} is differentiable and $Df_h$ is bounded.

\begin{proposition}
	Under the conditions of Theorem $\ref{thm2}$, $Df_h$ exists and 
	\[\sup_{x\in\M}\|Df_h(x)\|_{op}\leqslant C_0(h)/\kappa\]
	where $f_h$ is defined by \eqref{eqn10}.
	\label{cor1}
\end{proposition}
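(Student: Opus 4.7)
The proof is essentially a direct combination of the Lipschitz bound from Proposition \ref{prop5} with the general identity $\sup_{x\in\M}\|Dh(x)\|_{op}=C_0(h)$ recorded at the end of Section \ref{notation}; the only real content is verifying that the differential $Df_h$ exists everywhere on $\M$. My plan is to establish $\mathcal{C}^1$ regularity of $f_h$ via elliptic regularity applied to the Stein equation, and then read off the bound.

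More concretely, by Theorem \ref{thm2} we have $L_\phi f_h = h - \E{h(X)}$. The right-hand side is Lipschitz on $\M$, hence locally H\"older continuous of any exponent $\alpha\in(0,1)$, while $L_\phi = \tfrac12(\Delta - \langle\nabla\phi,\nabla\cdot\rangle)$ is uniformly elliptic with Lipschitz coefficients in any coordinate chart (the drift is Lipschitz by hypothesis, and the principal part inherits smoothness from the metric). Proposition \ref{prop5} already yields $f_h\in\mathcal{C}^{0,1}(\M)$, which in particular supplies the local boundedness needed to apply classical interior Schauder estimates chart-by-chart, and I would conclude $f_h\in\mathcal{C}^{2,\alpha}_{\mathrm{loc}}(\M)$. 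In particular $Df_h$ exists and is continuous on all of $\M$.

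With $Df_h$ continuous, the Section \ref{notation} identity applies and gives $\sup_{x\in\M}\|Df_h(x)\|_{op} = C_0(f_h)$. Combined with the bound $C_0(f_h)\leq C_0(h)/\kappa$ from Proposition \ref{prop5}, the desired estimate follows immediately.

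The main (and really only) obstacle is the regularity step, since the bound itself is already packaged into Proposition \ref{prop5}. An alternative route, perhaps more in the spirit of this paper, would be to work directly with the semigroup representation $f_h(x) = \int_0^\infty [\E{h(X)} - P_t h(x)]\,\d t$: the coupling of Theorem \ref{thm4a} with $\ell = 1$ implies $|P_t h(x) - P_t h(y)| \leq C_0(h) e^{-\kappa t}\rho(x,y)$, so $P_t h$ is Lipschitz with constant at most $C_0(h)e^{-\kappa t}$, and parabolic smoothing of the uniformly elliptic semigroup $P_t$ provides $\mathcal{C}^1$ regularity of $P_t h$ for $t > 0$. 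Differentiating under the integral, justified by the uniform integrable bound $\|DP_t h(x)\|_{op} \leq C_0(h)e^{-\kappa t}$, then gives $\|Df_h(x)\|_{op} \leq \int_0^\infty C_0(h)e^{-\kappa t}\,\d t = C_0(h)/\kappa$ directly.
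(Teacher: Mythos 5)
Your proposal is correct, and both of your routes fill in a step that the paper leaves unstated: the paper offers no proof of this proposition, merely asserting that it is ``a direct consequence'' of Proposition~\ref{prop5} and Theorem~\ref{thm2}. You correctly identify that the bound itself is already packaged in Proposition~\ref{prop5} together with the identity $\sup_{x\in\M}\|Df_h(x)\|_{op}=C_0(f_h)$ recorded in Section~\ref{notation}, so that the only substantive content is the existence of $Df_h$. Your elliptic-regularity route is the natural way to read the paper's appeal to Theorem~\ref{thm2}: placing $f_h$ in the generator domain with Lipschitz right-hand side $h-\E{h(X)}$ and invoking interior Schauder estimates chart-by-chart gives $f_h\in\mathcal C^{2,\alpha}_{\mathrm{loc}}$, from which the result follows. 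Your alternative route --- differentiating $f_h(x)=\int_0^\infty\bigl[\E{h(X)}-P_th(x)\bigr]\,\d t$ under the integral sign, dominated uniformly by $C_0(h)e^{-\kappa t}$ from the coupling of Theorem~\ref{thm4a} --- is equally valid and in fact prefigures the strategy the paper does spell out for the higher-order estimates in Propositions~\ref{prop6a} and~\ref{prop6}, where differentiation under the integral is carried out via the Elworthy--Li formula rather than parabolic smoothing. Either version is more explicit than the paper's terse assertion; no gaps.
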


We will see later in Section \ref{wasserstein_bound} that the bound on $Df_h$ given above suffices to bound the Wasserstein distance between the probability measure $\mu_\phi$ and another $\mu_\psi \propto e^{-\psi}$ . However, for bounding more general integral (semi-)metrics, bounds on first- and second-order derivatives of $f_h$, known as Stein factors, are needed.

Accordingly, denote by $\text{Ric}^\sharp_\phi$ the tensor equivalent to $\text{Ric}+\hbox{Hess}^\phi$ in the sense that, for any $x\in\M$, and for any $u,u'\in T_x(\M)$ 
\begin{eqnarray}
	\langle\text{Ric}^\sharp_\phi(u),\,u'\rangle=\text{Ric}(u,u')+\hbox{Hess}^\phi(u,u').
	\label{eqn12d}
\end{eqnarray}
Recall that (see \cite{BO})
\begin{eqnarray}
	\hbox{Hess}^\phi(u,u')=\langle D_u(\nabla\phi),\,u'\rangle\thickspace,
	\label{eqn12f}
\end{eqnarray}
and that, in terms of a (local) frame field $e_1,\cdots,e_m$,
\[\text{Ric}(u,u')=\sum_{i=1}^m\langle R(u,e_i)e_i,\,u'\rangle,\]
where $R$ denotes the Riemannian curvature tensor. Thus, it is possible to express $\text{Ric}^\sharp_\phi$ explicitly in terms of the frame field as
\begin{eqnarray}
	\text{Ric}^\sharp_\phi(u)=\sum_{i=1}^mR(u,e_i)e_i+D_u(\nabla\phi).
	\label{eqn12e}
\end{eqnarray}
We can define the Lipschitz constant for $\text{Ric}^\sharp_\phi$ in a similar way to the definition of the Lipschitz constant given in \eqref{tensor_norm}.  Let
\begin{equation}
	\label{H_1}
	\mathcal H_1:=\{h\in\mathcal C_0(\M) \cap \mathcal C^1(\M) \mid C_0(h) < \infty, \thickspace C_1(h) < \infty\}.
\end{equation}

\begin{proposition}
	Assume that the conditions of Theorem $\ref{thm2}$ hold. Assume further that ${\rm Ric}^\sharp_\phi$ is Lipschitz with finite Lipschitz constant $L({\rm Ric}^\sharp_\phi)$. For every $h \in \mathcal H_1$ with $f_h$ defined in \eqref{eqn10},  $Df_h$ is Lipschitz with constant 
	\[C_1(f_h)\leqslant C_1(h)\frac{1}{2\kappa}+C_0(h)\frac{L({\rm Ric}^\sharp_\phi)}{2\kappa^2}.\] 
	\label{prop6a}
\end{proposition}

\begin{remark}
\label{remark4}
	 As noted in Remark \ref{remark2}, if $\M=\mathbb R^m$, $\langle\text{Ric}^\sharp_\phi(u),\,u'\rangle=\hbox{Hess}^\phi(u,u')$. Then, since $\hbox{Hess}^\phi=D^2\phi$, $L(\text{Ric}^\sharp_\phi)=C_2(\phi)$. Thus, Proposition \ref{prop6a} recovers the corresponding result in \cite{MG}.
On the other hand, the result of Proposition \ref{prop6a} differs from the corresponding Proposition 6.2 in \cite{JT}: in theirs, the relationship between the constant $c_1$ obtained and those given in the assumptions is not specified; using our notation, the upper bound for $C_1(f_h)$ there would depend only on $C_0(h)$ while ours depends on both $C_0(h)$ and $C_1(h)$.
\end{remark}

\begin{proof}
	The proof uses Lemmas 4 and 5 given in Appendix B of Supplementary Material. For any $x\in\M$ and $v\in T_x(\M)$, consider the vector field $v^x_t$ along the path $X_{x,t}$ which solves the differential equation
	\begin{eqnarray}
		\frac{Dv^x_t}{\d t}=-\frac{1}{2}\text{Ric}^\sharp_\phi(v^x_t)
		\label{eqn12a}
	\end{eqnarray}
	with $v^x_0=v$, where $X_{x,t}$ is the solution to \eqref{eqn1}. It is known that, for any fixed $t>0$  and under the given condition for $h$, $N_s=D\E{h(X_{X_{x,s},t-s})}(v^x_s)$ is a local martingale for $0\leqslant s\leqslant t$ (see \cite{AT}). Since
	\[|N_s|\leqslant\|D\E{h(X_{X_{x,s},t-s})}\|_{op}|v^x_s|\leqslant C_0(h)\,|v^x_s|,\] 
	using Lemma 4 (Appendix B of Supplementary Material) with $q=1$, we see that $\E{|N_s|}<\infty$. Hence, $N_s$ is in fact a martingale on $[0,t]$, and so $\E{N_0}=\E{N_t}$, which in turn gives  
	\[D\E{h(X_{x,t})}(v)=\E{Dh(X_{x,t})(v^x_t)}.\]
	(See also \citet[Theorem 11.2]{JT}, where the $Z$ there corresponds to $-2\nabla\phi$ here.) Thus, from the definition of $f_h$, the Dominated Convergence Theorem and Theorem \ref{thm4}, it follows that, for any $v\in T_x(\M)$,
	\begin{eqnarray}
		Df_h(x)(v)=\int_0^\infty D\E{h(X_{x,t})}(v)\d t=\int_0^\infty\E{Dh(X_{x,t})(v^x_t)}\d t.
		\label{eqn10a}
	\end{eqnarray}

	Now, consider the pair of diffusions $(X_{x,t},Y_{y,t})$, starting from $(x,y)$, in Theorem \ref{thm4} with $\ell=2$. First, by applying the H\"older inequality, Theorem \ref{thm4} and Lemma 4 (Appendix B of Supplementary Material), we have that
	\begin{eqnarray}
		\begin{array}{rcl}
			&&\E{\left|\left(Dh(X_{x,t})-\Pi_{\gamma^{\vphantom{A}}_{X_{x,t},Y_{y,t}}}Dh(Y_{y,t})\right)(v^x_t)\right|}\vspace{0.1cm}\\
			&\leqslant&C_1(h)\E{\rho(X_{x,t},Y_{y,t})\,|v^x_t|}\vspace{0.1cm}
			\leqslant C_1(h)\rho(x,y)\,|v|\,e^{-2\kappa t}.
		\end{array}
		\label{eqn10d}
	\end{eqnarray}
	Moreover, writing $v^y_t$ for the solution of \eqref{eqn12a} with the underlying path $X_{x,t}$ replaced by $Y_{y,t}$ and with the initial condition $v^y_0=\Pi_{\gamma_{x,y}}(v)$, and denoting $\Pi_{\gamma^{\vphantom{A}}_{X_{x,t},Y_{y,t}}}(v^x_t)$ by $\tilde v^x_t$, we also have 
	\begin{eqnarray}
		\E{\left|\left(Dh(Y_{y,t})(\tilde v^x_t-v^y_t)\right)\right|}\leqslant C_0(h)\E{\left|\tilde v^x_t-v^y_t\right|}
		\leqslant C_0(h)\dfrac{L(\text{Ric}^\sharp_\phi)}{2\kappa}\rho(x,y)\,|v|\,e^{-\kappa t},
		\label{eqn10e}
	\end{eqnarray}
	where the second inequality follows from Lemma 5 (Appendix B of Supplementary Material) with $q=1$.
	
	Finally, noting that $\Pi_{\gamma_{x,y}}(Df_h(y))(v)=Df_h(y)(\Pi_{\gamma_{x,y}}(v))$, together with \eqref{eqn10a}, \eqref{eqn10d} and \eqref{eqn10e}, implies that
	\begin{eqnarray*}
	|(Df_h(x)-\Pi_{\gamma_{x,y}}Df_h(y))(v)|\vspace{0.1cm}
		&=&|Df_h(x)(v)-Df_h(y)(\Pi_{\gamma_{x,y}}(v))|\\
		&\leqslant&\int_0^\infty\left|\E{Dh(X_{x,t})(v^x_t)-Dh(Y_{y,t})(v^y_t)}\right|\d t\\
		&\leqslant&
		\int_0^\infty\E{\left|\left(Dh(X_{x,t})-
			\Pi_{\gamma^{\vphantom{A}}_{X_{x,t},Y_{y,t}}}Dh(Y_{y,t})\right)(v^x_t)\right|}\d t\\
		&&
		+\int_0^\infty\E{\left|Dh(Y_{y,t})(\tilde v^x_t-v^y_t)\right|}\d t\\
		&\leqslant&\left\{C_1(h)\frac{1}{2\kappa}+C_0(h)\frac{L(\text{Ric}^\sharp_\phi)}{2\kappa^2}\right\}\rho(x,y)|v|,
	\end{eqnarray*}
	i.e. $Df_h$ is Lipschitz with the required constant.
\end{proof}

The argument in Remark \ref{remark4} regarding the case when $\M=\mathbb R^m$ can be extended to the case when $\M$ has constant Ricci curvature, which implies that the bounds in \cite{MG} continue to hold for such $\M$. This gives the following corollary.
 
\begin{corollary}
	Assume that the conditions of Theorem $\ref{thm2}$ hold. Assume further that $\M$ is {\rm Ric} flat and $\phi$ has finite Lipschitz constant $C_2(\phi)$. Then, for every $h \in \mathcal H_1$ and $f_h$ as defined in \eqref{eqn10},  $Df_h$ is Lipschitz with constant 
	\[C_1(f_h)\leqslant C_1(h)\frac{1}{2\kappa}+C_0(h)\frac{C_2(\phi)}{2\kappa^2}.\] 
	\label{cor3}
\end{corollary}

The curvature of the manifold plays a more explicit role in the Lipschitz constant for $D^2f_h$. To see this, define the tensor $\d^\star R$ by
\[\d^\star R(u,v)=-\hbox{tr}D_.R(\cdot, u)v.\]
Then $\d^\star R$ satisfies
\[\langle\d^\star R(v_1,v_2),\,v_3\rangle=\langle(D_{v_3}\hbox{Ric}^\sharp)(v_1),\,v_2\rangle-\langle(D_{v_2}\hbox{Ric}^\sharp)(v_3),\,v_1\rangle.\]
Noting that $R(\nabla\phi)(u,v)=R(\nabla\phi, u)v$, to simplify notation, we also define
\begin{eqnarray}
	R^\sharp_\phi=\d^\star\!R+D{\rm Ric}^\sharp_\phi+R(\nabla\phi).
	\label{eqn12b}
\end{eqnarray}
 The bound on $Df_h$ requires restriction to the smaller and smoother class $\mathcal H_1$; the same is required when bounding $D^2f_h$.  Let
\begin{equation}
	\label{H_2}
	\mathcal H_2:=\{h\in\mathcal C_0(\M) \cap \mathcal C^2(\M) \mid C_0(h) < \infty, \thickspace C_1(h) < \infty,  \thickspace C_2(h) < \infty\}.
\end{equation}

\begin{proposition}
Assume that the conditions of Theorem $\ref{thm2}$ hold and that
	\[\chi^{\phantom{A}}_1=\sup_{x\in\M}\|R^\sharp_\phi\|_{op}(x)\quad\hbox{and}\quad\chi^{\phantom{A}}_2=m\sup_{x\in\M}\|R\|^2_{op}(x)\]
	are both finite, where $R^\sharp_\phi$ is defined by \eqref{eqn12b}. Further, assume that ${\rm Ric}^\sharp_\phi$, $R^\sharp_\phi$ and $R$ are all Lipschitz with finite Lipschitz constants $L({\rm Ric}^\sharp_\phi)$, $L(R^\sharp_\phi)$ and $L(R)$ respectively. For every $h \in \mathcal H_2$ with $f_h$ defined in \eqref{eqn10}: 
	\begin{enumerate}
		\item [(i)]
	If $\chi^{\phantom{A}}_2=0$, $D^2f_h$ exists and is Lipschitz with constant
	\[C_2(f_h)\leqslant\frac{1}{3\kappa}C_2(h)+\frac{3}{4\kappa^2}C_1(h)\,C_2(\phi)+C_0(h)\left(\frac{1}{4\kappa^2}C_3(\phi)+\frac{3}{4\kappa^3}C_2(\phi)^2\right).\]
	\item [(ii)]
If $\chi^{\phantom{A}}_2>0$ and $\kappa>1/2$, then $D^2f_h$ exists and is Lipschitz with constant
	\begin{eqnarray*}
		C_2(f_h)&\leqslant&
		C_2(h)\frac{1}{3\kappa}+C_1(h)\left\{\frac{L({\rm Ric}^\sharp_\phi)}{2\kappa^2}+\frac{4}{8\kappa-1}\left(\frac{\chi_1^2+2\chi^{\phantom{A}}_2}{4\kappa+1}\right)^{1/2}\right\}\\
		&&+\,\,C_0(h)\frac{2\tilde\beta}{2\kappa-1}
	\end{eqnarray*}
	where 
	\[\tilde\beta^2=\frac{\beta_1}{4\kappa+1}+\frac{\beta_2}{3\kappa+1}+\frac{\beta_3}{2\kappa+1},\]
	with
	\begin{eqnarray*}
		\beta_1&=&2mL(R)^2+\frac{1}{2}L(R^\sharp_\phi)^2+\frac{\chi_1^2+6\chi^{\phantom{A}}_2}{4\kappa+1}L({\rm Ric}^\sharp_\phi)^2,\\
		\beta_2&=&\frac{\chi^{\phantom{A}}_1}{\kappa}L({\rm Ric}^\sharp_\phi)\,L(R^\sharp_\phi),\\
		\beta_3&=&\frac{1}{\kappa^2}\left(\frac{\chi_1^2}{2}+2\chi^{\phantom{A}}_2\right)L({\rm Ric}^\sharp_\phi)^2.
	\end{eqnarray*}
		\end{enumerate}
\label{prop6}
\end{proposition}

\begin{remark}
	Note that, $\chi^{\phantom{A}}_2=0$ corresponds to $\M$ being a flat manifold, such as a Euclidean space, a cylinder or a flat torus. Consequently, $\chi^{\phantom{A}}_1=L({\rm Ric}^\sharp_\phi)=C_2(\phi)$ and $L(R^\sharp_\phi)=C_3(\phi)$. Our result thus recovers the corresponding bound given in \cite{MG} for $\mathbb R^m$, where $L_i$, $M_i(h)$ and $k$ in \cite{MG} correspond respectively to $C_{i-1}(\phi)$, $C_{i-1}(h)$ and $2\kappa$ here. Our result establishes that their upper bound also holds for general complete and connected flat manifolds. 
	
	On the other hand, if $\M$ is locally symmetric, we have $DR=0$. Then, it follows from \eqref{eqn12f} and \eqref{eqn12e} that $L({\rm Ric}^\sharp_\phi)=L(D_u(\nabla\phi))=C_2(\phi)$ and $L(R^\sharp_\phi)=L(D({\rm Ric}^\sharp_\phi))=C_3(\phi)$. As symmetric manifolds are locally symmetric, this will hold for a class of familiar manifolds, such as spheres, hyperbolic spaces, projective spaces and the space of positive definite symmetric matrices.
Pertinently, the upper bound for $C_2(f_h)$ in Proposition \ref{prop6} when $\chi_2=0$ is not the limit, as $\chi_2\rightarrow0$, of that for $\chi_2>0$. In addition, we need an extra requirement for $\kappa$ when $\chi_2>0$.
\end{remark}

\begin{proof}
	The proof uses Lemmas 4, 5, 6 and 7 given in Appendix B of Supplementary Material. 
	Consider the vector field $V^x_t$ along the path $X_{x,t}$ which satisfies the stochastic covariant It\^o equation 
	\begin{eqnarray}
		DV^x_t=R(\Xi\d B_t,u^x_t)v^x_t-\frac{1}{2}\left\{R^\sharp_\phi(u^x_t,v^x_t)+\hbox{Ric}^\sharp_\phi(V^x_t)\right\}\d t
		\label{eqn12c}
	\end{eqnarray}
	with $V^x_0=0$, where $\Xi$ is defined in \eqref{eqn1a}, $R^\sharp_\phi$ and $\hbox{Ric}^\sharp_\phi$ are defined by \eqref{eqn12b} and \eqref{eqn12d} respectively, and where $u^x_t$ and $v^x_t$ are the solutions of \eqref{eqn12a} both with the underlying path $X_{x,t}$ and with the initial conditions $u^x_0=u$ and $v^x_0=v$ respectively. It is known that, for $h$ satisfying the given conditions, $N'_s=D^2\E{h(X_{X_{x,s},t-s})}(u^x_s,v^x_s)+D\E{h(X_{X_{x,s},t-s})}(V^x_s)$ is a local martingale for $0\leqslant s\leqslant t$,  for any fixed $t>0$ \citep[Lemma 11.3]{JT}. Since
	\begin{eqnarray*}
		|N'_s|&\leqslant&
		\|\E{D^2h(X_{X_{x,s},t-s})}\|_{op}|u^x_s|\,|v^x_s|+\|\E{Dh(X_{X_{x,s},t-s})}\|_{op}\,|V^x_s|\\
		&\leqslant&C_1(h)|u^x_s|\,|v^x_s|+C_0(h)\,|V^x_s|,
	\end{eqnarray*}
	it follows from Lemmas 4 and 6 (Appendix B of Supplementary Material) that $\E{|N_s'|}<\infty$ so that $N'_s$ is in fact a martingale for $0\leqslant s\leqslant t$. Thus $\E{N'_0}=\E{N'_t}$, which implies that, for any fixed $t>0$ and $u,v\in T_x(\M)$, 
	\begin{eqnarray*}
		D^2\E{h(X_{x,t})}(u,v)=\E{D^2h(X_{x,t})(u^x_t,v^x_t)}+\E{D h(X_{x,t})(V^x_t)}.
	\end{eqnarray*}
	Then, the definition of $f_h$, the Dominated Convergence Theorem and Theorem \ref{thm4} together ensure that $D^2f_h$ exists and that, for any $u,v\in T_x(\M)$,
	\begin{eqnarray}
		\begin{array}{rcl}
			D^2f_h(x)(u,v)
			&=&\displaystyle\int_0^\infty\left\{\E{D^2h(X_{x,t})(u^x_t,v^x_t)}+\E{Dh(X_{x,t})(V^x_t)}\right\}\d t.
		\end{array}       
	\end{eqnarray}

	Now, we construct a pair of diffusions $(X_{x,t},Y_{y,t})$, starting from $(x,y)$, as in Theorem \ref{thm4}. Since we need to apply Lemmas 5 and 7 (Appendix B of Supplementary Material) to the processes related to $(X_{x,t},Y_{y,t})$ in the following proof, it is necessary to take the parameter $\ell$ in the construction of $(X_{x,t},Y_{y,t})$ to be 6. As in the proof of Proposition \ref{prop6a}, write $u^y_t$ and $v^y_t$ for the solutions of \eqref{eqn12a} with the underlying path $X_{x,t}$ replaced by $Y_{y,t}$ and with the respective initial conditions $u^y_0=\Pi_{\gamma_{x,y}}(u)$ and $v^y_0=\Pi_{\gamma_{x,y}}(v)$. Also, let $\tilde u^x_t$ denote $\Pi_{\gamma^{\vphantom{A}}_{X_{x,t},Y_{y,t}}}(u^x_t)$, and similarly for $\tilde v^y_t$ and $\tilde V^y_t$. Then,
	\begin{eqnarray}
		\begin{array}{rcl}
			&&|(D^2f_h(x)-\Pi_{\gamma_{x,y}}D^2f_h(y))(u,v)|\vspace{0.2cm}\\
			%&=&|D^2f_h(x)(u,v)-D^2f_h(y)(\Pi_{\gamma_{x,y}}u,\Pi_{\gamma_{x,y}}v)|\vspace{0.2cm}\\
			&\leqslant&\displaystyle\int_0^\infty\E{\left|D^2h(X_{x,t})(u^x_t,v^x_t)-D^2h(Y_{y,t})(u^y_t,v^y_t)\right|}\d t\\
			&&+\displaystyle\int_0^\infty\E{\left|Dh(X_{x,t})(V^x_t)-Dh(Y_{y,t})(V^y_t)\right|}\d t.
		\end{array}
		\label{eqn10c}
	\end{eqnarray}
	Under the given conditions on $h$, the first term on the right hand side of \eqref{eqn10c} can be estimated as 
	\begin{eqnarray*}
		&&\int_0^\infty\E{\left|D^2h(X_{x,t})(u^x_t,v^x_t)-D^2h(Y_{y,t})(u^y_t,v^y_t)\right|}\d t\\
		&\leqslant&\int_0^\infty\E{\left|\left(D^2h(X_{x,t})-\Pi_{\gamma^{\phantom{A}}_{X_{x,t},Y_{y,t}}}D^2h(Y_{y,t})\right)(u^x_t,v^x_t)\right|}\d t\\
		&&+\int_0^\infty\E{\left|D^2h(Y_{y,t})(\tilde u^x_t-u^y_t,\tilde v^x_t)\right|}\d t
		+\int_0^\infty\E{\left|D^2h(Y_{y,t})(u^y_t,\tilde v^x_t-v^y_t)\right|}\d t\\
		&\leqslant&C_2(h)\int_0^\infty\E{\rho(X_{x,t},Y_{y,t})\,|u^x_t|\,|v^x_t|}\,\d t
	  +C_1(h)\int_0^\infty\E{|\tilde u^x_t-u^y_t|\,|\tilde v^x_t|+|u^y_t|\,|\tilde v^x_t-v^y_t|}\d t.
	\end{eqnarray*}
	Similarly, for the second term on the right hand side of \eqref{eqn10c}, we have that
	\begin{eqnarray*}
		&&\int_0^\infty\E{\left|Dh(X_{x,t})(V^x_t)-Dh(Y_{y,t})(V^y_t)\right|}\d t\\
		&\leqslant&\int_0^\infty\E{\left|\left(Dh(X_{x,t})-\Pi_{\gamma^{\phantom{A}}_{X_{x,t},Y_{y,t}}}Dh(Y_{y,t})\right)(V^x_t)\right|}\d t
		+\int_0^\infty\E{\left|Dh(Y_{y,t})\left(\tilde V^x_t-V^y_t\right)\right|}\d t\\
		&\leqslant&C_1(h)\displaystyle\int_0^\infty\E{\rho(X_{x,t},Y_{y,t})\,|V^x_t|}\,\d t+C_0(h)\displaystyle\int_0^\infty\E{|\tilde V^x_t-V^y_t|}\d t.
	\end{eqnarray*}

	By the H\"older inequality, Theorem \ref{thm4} and Lemmas 4, 5, 6 and 7 (Appendix B of Supplementary Material), it follows from the above estimations and from \eqref{eqn10c} that, if $\chi^{\phantom{A}}_2>0$,
	\begin{eqnarray*}
		&&\frac{1}{\rho(x,y)\,|u|\,|v|}\left|(D^2f_h(x)-\Pi_{\gamma_{x,y}}D^2f_h(y))(u,v)\right|\\
		&\leqslant&
		C_2(h)\frac{1}{3\kappa}+C_1(h)\left\{\frac{L({\rm Ric}^\sharp_\phi)}{2\kappa^2}+\left(\frac{2\chi^{\phantom{A}}_2+\chi_1^2}{4\kappa+1}\right)^{1/2}\frac{4}{8\kappa-1}\right\}+C_0(h)\frac{2\tilde\beta}{2\kappa-1}
	\end{eqnarray*}
	when $\kappa>1/2$, as required. 
	
	If $\chi^{\phantom{A}}_2=0$, we need to modify the above application of Lemmas 6 and 7 (Appendix B of Supplementary Material). This results in 
	\begin{eqnarray*}
		&&\frac{1}{\rho(x,y)\,|u|\,|v|}\left|(D^2f_h(x)-\Pi_{\gamma_{x,y}}D^2f_h(y))(u,v)\right|\\
		&\leqslant&
		\frac{1}{3\kappa}C_2(h)+\frac{3}{4\kappa^2}C_1(h)\,C_2(\phi)+C_0(h)\left(\frac{1}{4\kappa^2}C_3(\phi)+\frac{3}{4\kappa^3}C_2(\phi)^2\right).
	\end{eqnarray*}
	This shows that $Ddf_h$ is Lipschitz with the required constant.
\end{proof}

\section{Application to bounding integral (semi-)metrics}
\label{bounds}
A key application of Stein's method is in obtaining upper bounds on an integral (semi-)metric $d_\mathcal{H}(X,Z)$, with respect to some function class $\mathcal{H}$, for an arbitrary random variable $Z\sim \nu$. Exploiting the characterising property of the operator $L_\phi$,
\[
\E{h(Z)}-\E{h(X)}=\E{L_\phi f_h(Z)}, \qquad \forall h \in \mathcal{H},
\]
the task then reduces to obtaining a uniform upper bound on $\E{L_\phi f_h(Z)}$ over functions $f_h$ using the Stein factors. The quantity $d_\mathcal H$ is clearly a semi-metric and is a metric only if $\mathcal H$ separates points in the set of signed measures on $\M$. 

\subsection{Wasserstein distance between $\mu_\phi$ and $\mu_\psi$}
\label{wasserstein_bound}

The result of Theorem \ref{thm2} in conjunction with the first-order bound in Proposition \ref{cor1} can be used to obtain an upper bound on the 1-Wasserstein distance between certain types of random variables. For this we consider the function class
\[\mathcal H^1_{\leq 1}:=\{h\in \mathcal C(\M) \mid h \text{ is Lipschitz with } C_0(h)\leqslant1\},\]
under which $d_{\mathcal H}$ is a bonafide metric. 
The 1-Wasserstein distance between two random variables $Z_1$ and $Z_2$ on $\M$ is then defined as
\[d_{\mathcal W}(Z_1,Z_2):=\sup_{h\in\mathcal H^1_{\leq 1}}|\E{h(Z_1)}-\E{h(Z_2)}|.\]

\begin{theorem}
	Assume that the conditions of Theorem $\ref{thm2}$ hold. Let $Z\sim\mu_\psi$ such that $\E{\rho(Z,x)}<\infty$ for some $x\in\M$, where $\psi$ satisfies \eqref{eqn2} with some constant $\kappa'>0$. Then
	\[d_{\mathcal W}(Z,X)\leqslant\frac{1}{2\kappa}\E{\left|\nabla(\psi-\phi)(Z)\right|}.\]
	\label{thm5}
\end{theorem}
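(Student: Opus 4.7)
The plan is to exploit the Stein equation from Theorem \ref{thm2} together with the gradient bound from Proposition \ref{cor1}, and then to replace the operator $L_\phi$ by $L_\psi$ under the expectation with respect to $\mu_\psi$, picking up precisely the term $\nabla(\psi-\phi)$.

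Concretely, fix $h\in\mathcal W$. After a standard approximation to bring $h$ into the class $\mathcal C_0(\M)$ required by Theorem \ref{thm2} (for instance, by multiplying by a compactly supported cutoff that is $1$ on increasingly large balls; the Lipschitz constant can be preserved up to negligible error in the limit), the Stein equation gives $h(x)-\E{h(X)}=L_\phi f_h(x)$ where $f_h$ is defined by \eqref{eqn10}. Taking expectations with respect to $Z\sim\mu_\psi$ yields
\[
\E{h(Z)}-\E{h(X)}=\E{L_\phi f_h(Z)}.
\]
The key step is to rewrite the right-hand side using the target operator of $\mu_\psi$. Since $L_\phi f_h-L_\psi f_h=\tfrac{1}{2}\langle\nabla(\psi-\phi),\nabla f_h\rangle$, and since $\mu_\psi$ is $L_\psi$-invariant (by the same integration-by-parts identity that was used in Section \ref{assumptions} to verify invariance of $\mu_\phi$ under $L_\phi$), one obtains $\E{L_\psi f_h(Z)}=0$ and hence
\[
\E{L_\phi f_h(Z)}=\frac{1}{2}\,\E{\langle\nabla(\psi-\phi)(Z),\nabla f_h(Z)\rangle}.
\]
Applying Cauchy--Schwarz in $T_Z\M$ and using $|\nabla f_h|=\|Df_h\|_{op}\leqslant C_0(h)/\kappa\leqslant1/\kappa$ from Proposition \ref{cor1} (since $C_0(h)\leqslant1$) finishes the estimate, and taking the supremum over $h\in\mathcal W$ delivers the stated bound on $d_{\mathcal W}(Z,X)$.

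The main technical obstacle is justifying $\E{L_\psi f_h(Z)}=0$: the integration-by-parts identity presented in Section \ref{assumptions} requires enough regularity and integrability of $f_h$ (in particular, an $L^1(\mu_\psi)$ control on $\Delta f_h$ and $\langle\nabla\psi,\nabla f_h\rangle$). The gradient is controlled by Proposition \ref{cor1}, and the moment assumption $\E{\rho(Z,x)}<\infty$ together with the Lipschitz hypothesis on $\nabla\psi$ handles the drift term; the Laplacian term can be handled through the Stein equation itself, which expresses $\Delta f_h$ in terms of $h-\E{h(X)}$ and $\langle\nabla\phi,\nabla f_h\rangle$, both of which are integrable under $\mu_\psi$ given $\E{\rho(Z,x)}<\infty$ and the Lipschitz assumption on $\nabla\phi$. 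The companion issue is the approximation needed to deploy Theorem \ref{thm2} on test functions in $\mathcal W$ that are not necessarily in $\mathcal C_0(\M)$; a cutoff-plus-limiting argument suffices because all ensuing bounds are uniform in the cutoff radius by virtue of the exponential contraction in Theorem \ref{thm4a}.
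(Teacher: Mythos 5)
Your proof is correct and follows essentially the same route as the paper: apply the Stein equation from Theorem \ref{thm2} to $h\in\mathcal W\cap\mathcal C_0(\M)$, split $L_\phi f_h = L_\psi f_h + \tfrac{1}{2}\langle\nabla(\psi-\phi),\nabla f_h\rangle$, kill the $L_\psi$ term by $\mu_\psi$-invariance, and bound the remainder with Cauchy--Schwarz and Proposition \ref{cor1}. The paper handles the restriction to $\mathcal C_0(\M)$ by directly asserting $\sup_{\mathcal W}=\sup_{\mathcal W\cap\mathcal C_0(\M)}$ rather than via your explicit cutoff, and it states $\E{L_\psi f_h(Z)}=0$ with less elaboration than you give, but the substance is identical.
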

\begin{proof}
The proof pursues a similar argument to that of Proposition 4.1 of \cite{MRS}.  Note first that
	\[\sup_{h\in\mathcal H^1_{\leq 1}}|\E{h(Z)}-\E{h(X)}|=\sup_{h\in\mathcal H^1_{\leq 1}\cap\mathcal C_0(\M)}|\E{h(Z)}-\E{h(X)}|.\]
	For $h\in\mathcal H^1_{\leq 1}\cap\mathcal C_0(\M)$, we have by Theorem \ref{thm2} that
	\[\E{h(Z)}-\E{h(X)}=\E{L_\phi f_h(Z)}.\]
	On the other hand, the given assumption that $Z\sim\mu_\psi$, where $\psi$ satisfies \eqref{eqn2}, also implies that $\E{L_\psi f_h(Z)}=0$ for $h\in\mathcal H^1_{\leq 1}\cap\mathcal C_0(\M)$. Noting that
	\[L_\phi f_h(x)=L_\psi f_h(x)+\frac{1}{2}\langle\nabla\psi(x)-\nabla\phi(x),f_h(x)\rangle,\]
	we obtain
	\[\E{h(Z)}-\E{h(X)}=\frac{1}{2}\E{\langle\nabla\psi(Z)-\nabla\phi(Z),\nabla f_h(Z)\rangle},\]
	so that the result follows from Proposition \ref{cor1}.
\end{proof}

\begin{example}
	Assume that $\M=S^m$ and that all probability measures $\mu_\varphi$ involved satisfy the condition 
	\[\hbox{{\rm Hess}}^\varphi\geqslant(2\kappa-(m-1))\,g,\] 
	for some $\kappa>0$. 
	
	\begin{enumerate}[label=(\roman*), leftmargin=*]
		\itemsep 1.3em
		\item The functions $\phi$ and $\psi$ corresponding to von Mises-Fisher distributions $M(x_1,c_1)$ and $M(x_2,c_2)$ are respectively $-c_1\cos\rho(x_1,x)$ and $-c_2\cos\rho(x_2,x)$. Then,
		\[\left|\nabla(\psi-\phi)(x)\right|=c^*|\sin\rho(x^*,x)|\leqslant c^*\rho(x^*,x)\leqslant c^*\left\{\rho(x^*,x_2)+\rho(x_2,x)\right\},\]
		where $c^*=|c_2x_2-c_1x_1|$ and $x^*=(c_2x_2-c_1x_1)/c^*$. From the symmetry between $\phi$ and $\psi$, it follows that the Wasserstein-1 distance $d_{\mathcal W}$ between $M(x_1,c_1)$ and $M(x_2,c_2)$ is bounded:
		\[d_{\mathcal W}(X_1,X_2)\leqslant\frac{|c_2x_2-c_1x_1|}{4\kappa}\left\{\sum_{i=1}^2(\rho(x^*,x_i)+\E{\rho(x_i,X_i)})\right\},\]
		where $X_i\sim M(c_i,x_i)$.
		
		\item  The function $\psi$ corresponding to the Fisher-Watson distribution 
		\[W(x_1,x_2,c_1,c_2)\propto e^{c_1\langle x_1,x\rangle+c_2\langle x_2,x\rangle^2}\d\vol(x),\]
		where $\langle x_1,x_2\rangle=0$, is $-c_1\cos\rho(x_1,x)-c_2\cos^2\rho(x_2,x)$. If $\mu_\phi$ is the von Mises-Fisher distribution $M(x_1,c_1)$, then
		\[\left|\nabla(\psi-\phi)(x)\right|=c_2|\sin(2\rho(x_2,x))|.\]
		Hence, for $X\sim  M(x_1,c_1)$ and $Z\sim W(x_1,x_2,c_1,c_2)$,
		\[d_{\mathcal W}(X,Z)\leqslant\frac{c_2}{2\kappa}\E{|\sin(2\rho(x_2,Z))|}.\]

\label{ex6}
\item 
	Let $m>2$ and $\M=SO(m)$ with the bi-invariant metric determined by $g(E_1,E_2)\!=\!-\frac{1}{2}\text{tr}(E_1E_2)$ for skew-symmetric $E_1,E_2$. Assume that, for $S\in\M$, $\phi(S)=-c\,\text{tr}(S_0S)$ with $S_0\in SO(m)$ and that constant $c>0$. Then, $\mu_\phi$ is a von Mises-Fisher distribution on $SO(m)$. Since for any skew-symmetric matrix $E$ 
	\[\lim_{t\rightarrow0}\frac{\phi(S\,e^{tE})-\phi(S)}{t}=-c\,\text{tr}(S_0SE),\]
	we have that $\nabla\phi(S)=\frac{c}{\sqrt2}S\{(S_0S)^\top-S_0S\}$. This implies that
	\begin{eqnarray*}
		\frac{2}{c^2}|\nabla\phi(S)|^2&=&\langle S\{(S_0S)^\top-S_0S\},\,S((S_0S)^\top-S_0S)\rangle_S\\
		&=&\langle(S_0S)^\top-S_0S,\,(S_0S)^\top-S_0S\rangle_I\\
		&=&-\text{tr}\left(((S_0S)^\top-S_0S)^2\right)=2(m-\text{tr}((S_0S)^2)).
	\end{eqnarray*}

	If $c\in(0,(m-2)/2)$, there is a $\kappa>0$ such that the Bakry-Emery curvature criterion (A1) holds, as seen in Example \ref{examples}(iv). Then, if $Z$ is a uniform random variable on $SO(m)$, $S_0Z$ is also a uniform random variable and so
	\[d_{\mathcal W}(Z,X)\leqslant\frac{c}{2\kappa}\E{\sqrt{m-\text{tr}(Z^2)}}.\]
	\label{ex7}
	\end{enumerate}
\end{example}

\subsection{Integral semi-metrics for general distributions}
\label{general_bound}
If $h \in \mathcal H_2$, the result of Theorem \ref{thm2}, together with Propositions \ref{prop6a} and \ref{prop6}, enable us to bound $\E{h(Z)}-\E{h(X)}$ for a more general random variable $Z$ on $\M$ as follows, where $\mathcal H_2$ is as defined in \eqref{H_2}.

\begin{corollary}
Assume that the conditions of Proposition $\ref{prop6}$ hold. Assume further that $\phi$ is Lipschitz with Lipschitz constants $C_i(\phi)$, $i=0,1$. Then for every $h \in \mathcal H_2$
\[|\E{h(Z)}-\E{h(X)}|\leqslant\eta\E{\rho(Z,X)},\]
where 
\[\eta=mC_2(f_h)+C_0(\phi)C_1(f_h)+C_1(\phi)C_0(f_h)\]
and where $C_i(f_h)$ are bounded as in Propositions $\ref{prop6a}$ and $\ref{prop6}$.
\label{cor4}
\end{corollary}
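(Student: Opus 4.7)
The plan is to reduce the bound on $|\E{h(Z)}-\E{h(X)}|$ to a Lipschitz estimate on $L_\phi f_h$, and then decompose $L_\phi f_h$ into its Laplacian and drift pieces so that each can be controlled by the Lipschitz bounds on $f_h$ supplied by Propositions \ref{cor1}, \ref{prop6a} and \ref{prop6}. Fix any joint realisation of $X\sim\mu_\phi$ and $Z\sim\nu$ on a common probability space. By Theorem \ref{thm2} we have the pointwise identity $h(x)-\E{h(X)}=L_\phi f_h(x)$, and the invariance of $\mu_\phi$ yields $\E{L_\phi f_h(X)}=0$. Therefore
\[
\E{h(Z)}-\E{h(X)} \;=\; \E{L_\phi f_h(Z)} \;=\; \E{L_\phi f_h(Z)-L_\phi f_h(X)},
\]
so it suffices to control $|L_\phi f_h(z)-L_\phi f_h(x)|$ by a constant multiple of $\rho(z,x)$ and then take expectations, eventually choosing the coupling that realises $\E{\rho(Z,X)}$.

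For the Laplacian piece of $L_\phi f_h=\tfrac12\{\Delta f_h-\langle\nabla\phi,\nabla f_h\rangle\}$, I would fix $x,z\in\M$, a minimal geodesic $\gamma_{z,x}$ with associated parallel transport $\Pi$, and an orthonormal basis $\{e_i\}_{i=1}^m$ of $T_z(\M)$. Since $\Pi$ is a linear isometry of tangent spaces, $\{\Pi^{-1}e_i\}$ is an orthonormal basis of $T_x(\M)$, so
\[
\Delta f_h(z)-\Delta f_h(x) \;=\; \sum_{i=1}^m\bigl[D^2 f_h(z)-\Pi(D^2 f_h(x))\bigr](e_i,e_i),
\]
where $\Pi$ acts on a bilinear form by $(\Pi B)(u,v):=B(\Pi^{-1}u,\Pi^{-1}v)$. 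Each summand is bounded by $\|D^2 f_h(z)-\Pi D^2 f_h(x)\|_{op}$, yielding the estimate $|\Delta f_h(z)-\Delta f_h(x)|\leqslant m\,C_2(f_h)\,\rho(z,x)$.

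For the drift piece, I would use that $\Pi$ preserves inner products to write $\langle\nabla\phi(x),\nabla f_h(x)\rangle=\langle\Pi\nabla\phi(x),\Pi\nabla f_h(x)\rangle$, and then add and subtract $\langle\Pi\nabla\phi(x),\nabla f_h(z)\rangle$ to obtain
\[
\langle\nabla\phi(z),\nabla f_h(z)\rangle-\langle\nabla\phi(x),\nabla f_h(x)\rangle \;=\; \langle\nabla\phi(z)-\Pi\nabla\phi(x),\nabla f_h(z)\rangle + \langle\Pi\nabla\phi(x),\nabla f_h(z)-\Pi\nabla f_h(x)\rangle.
\]
Cauchy--Schwarz applied to each term, together with the sup bounds $|\nabla\phi|\leqslant C_0(\phi)$, $|\nabla f_h|\leqslant C_0(f_h)$ and the Lipschitz controls $|\nabla\phi(z)-\Pi\nabla\phi(x)|\leqslant C_1(\phi)\rho(z,x)$ and $|\nabla f_h(z)-\Pi\nabla f_h(x)|\leqslant C_1(f_h)\rho(z,x)$, yields the bound $[C_1(\phi)C_0(f_h)+C_0(\phi)C_1(f_h)]\rho(z,x)$. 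Summing this with the Laplacian estimate shows that $L_\phi f_h$ is Lipschitz with constant at most $\eta/2$, and reinserting into the identity from the first paragraph produces the stated inequality.

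The proof is essentially a packaging step: the substantive work (the Stein identity of Theorem \ref{thm2} and the derivative bounds of Propositions \ref{cor1}, \ref{prop6a} and \ref{prop6}) has already been done, so there is no deep obstacle. The only points requiring some care are the correct use of parallel transport in expressing $\Delta f_h$ as a trace over an orthonormal frame, and the convention that $\E{\rho(Z,X)}$ refers to the expectation under an arbitrarily chosen coupling of the marginals (so that the bound passes to the Wasserstein infimum by optimisation).
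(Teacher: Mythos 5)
Your proof is correct and follows essentially the same route as the paper: the Stein identity plus centring by $\E{L_\phi f_h(X)}=0$, then splitting $L_\phi f_h$ into its Laplacian and drift pieces, with the Laplacian bounded through a parallel-transported orthonormal frame by $mC_2(f_h)$ and the drift bounded by the add-and-subtract / Cauchy--Schwarz argument using $C_0(\phi),C_1(\phi),C_0(f_h),C_1(f_h)$. You are actually more careful than the paper in retaining the factor $\tfrac12$ in $L_\phi=\tfrac12\{\Delta-\langle\nabla\phi,\nabla\rangle\}$, so your argument yields the sharper Lipschitz constant $\eta/2$ for $L_\phi f_h$ and hence the stronger bound $\tfrac{\eta}{2}\E{\rho(Z,X)}$; the paper's displayed chain of inequalities silently drops that factor, which is why the corollary states $\eta$ rather than $\eta/2$.
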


\begin{proof}
It follows from a direct estimation of $|\E{L_\phi(f_h)(Z)}|$ that
\begin{eqnarray*}
|\E{L_\phi(f_h)(Z)}|
&=&|\E{\left(L_\phi(f_h)(Z)-L_\phi(f_h)(X)\right)}|\\
&\leqslant&|\E{\left(\Delta(f_h)(Z)-\Delta(f_h)(X)\right)}|\\
&&+|\E{\langle\nabla\phi(Z),\nabla f_h(Z)\rangle-\langle\nabla\phi(X),\nabla f_h(X)\rangle}|\\
&\leqslant&mC_2(f_h)\E{\rho(Z,X)}\\
&&+\,\left|\E{\langle\nabla\phi(Z),\nabla f_h(Z)\rangle-\langle\Pi_{\gamma^{\phantom{A}}_{X,Z}}\nabla\phi(X),\nabla f_h(Z)\rangle}\right|\\
&&+\,\left|\E{\langle\nabla\phi(X),\Pi_{\gamma^{\phantom{A}}_{Z,X}}\nabla f_h(Z)\rangle-\langle\nabla\phi(X),\nabla f_h(X)\rangle}\right|\\
&\leqslant&\left\{mC_2(f_h)+C_0(f_h)C_1(\phi)+C_1(f_h)C_0(\phi)\right\}\E{\rho(Z,X)}
\end{eqnarray*}
as required.
\end{proof}

A further simplification occurs when $\M$ is compact. 

\begin{corollary}
If $\M$ is compact then, for any Lipschitz function on $\M$ with $C_0(h)\leqslant1$, any fixed $\epsilon>0$ and $s>0$, there exists a $g \in \mathcal{C}^2(\M)$ with Lipschitz constants $C_i(g), i=0,1,2$, such that $C_0(g)\leqslant 1+s$ and 
\[|\E{h(Z)}-\E{h(X)}|\leqslant 2\epsilon+\left\{mC_2(f_g)+C_0(f_g)C_1(\phi)+C_1(f_g)C_0(\phi)\right\}\E{\rho(Z,X)}.\]
\end{corollary}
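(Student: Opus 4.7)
The plan is to reduce the general Lipschitz case to the $\mathcal{C}^2$ case of Corollary \ref{cor4} by smoothing $h$. Specifically, given $h$ with $C_0(h)\leqslant 1$ and $\epsilon,s>0$, I will construct a $g\in\mathcal C^2(\M)$ with $\|g-h\|_\infty\leqslant\epsilon$ and $C_0(g)\leqslant 1+s$, then apply Corollary \ref{cor4} to $g$ and absorb the approximation error into a $2\epsilon$ term via the triangle inequality.

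\textbf{Step 1 (smoothing).} Since $\M$ is compact, $h$ is bounded and uniformly continuous. A convenient way to produce $g$ is to apply the heat semigroup associated with the Laplace--Beltrami operator: set $g=Q_\tau h$ for small $\tau>0$, where $Q_\tau=e^{\tau\Delta/2}$. On a compact manifold, $Q_\tau h\in\mathcal C^\infty(\M)\subset\mathcal C^2(\M)$ for every $\tau>0$, and $Q_\tau h\to h$ uniformly as $\tau\downarrow 0$ by strong continuity of the semigroup on $\mathcal C(\M)$, which handles $\|g-h\|_\infty\leqslant\epsilon$ for $\tau$ sufficiently small. The Lipschitz constant is controlled by the Bakry--Emery gradient estimate: since $\M$ is compact its Ricci curvature is bounded below by some $-K$, giving $|\nabla Q_\tau h|\leqslant e^{K\tau/2}Q_\tau(|\nabla h|)\leqslant e^{K\tau/2}C_0(h)$, so $C_0(g)\leqslant e^{K\tau/2}\leqslant 1+s$ provided $\tau$ is small enough. (If one prefers to avoid heat-kernel estimates, an equivalent construction is available by embedding $\M$ isometrically into a Euclidean space via Nash, convolving the (Lipschitz extension of the) pullback with a standard mollifier, and pulling back; the same two bounds, uniform closeness and near-preservation of the Lipschitz constant, follow by standard arguments.)

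\textbf{Step 2 (triangle inequality and application of Corollary \ref{cor4}).} With such $g$ in hand, write
\[|\E{h(Z)}-\E{h(X)}|\leqslant|\E{h(Z)-g(Z)}|+|\E{g(Z)}-\E{g(X)}|+|\E{g(X)-h(X)}|\leqslant 2\epsilon+|\E{g(Z)}-\E{g(X)}|.\]
Because $g\in\mathcal C^2(\M)$ with finite $C_i(g)$ for $i=0,1,2$ (the higher-order constants need not be small, but they are finite on a compact manifold since $g=Q_\tau h$ is smooth), the hypotheses of Proposition \ref{prop6} and hence of Corollary \ref{cor4} are satisfied for $g$ in place of $h$. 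Applying Corollary \ref{cor4} to $g$ yields
\[|\E{g(Z)}-\E{g(X)}|\leqslant\bigl\{mC_2(f_g)+C_0(f_g)C_1(\phi)+C_1(f_g)C_0(\phi)\bigr\}\E{\rho(Z,X)},\]
and substituting into the triangle inequality gives the claimed bound.

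\textbf{Main obstacle.} The nontrivial step is the smoothing in Step 1: we need simultaneous uniform closeness $\|g-h\|_\infty\leqslant\epsilon$ and near-preservation of the Lipschitz norm $C_0(g)\leqslant 1+s$. The heat semigroup approach relies on the gradient commutation inequality, which in turn uses the lower Ricci bound available on a compact manifold. Once $g$ is produced, everything else is just the triangle inequality and the previously established Corollary \ref{cor4}; no new analysis on $f_g$ is needed because the hypotheses of Proposition \ref{prop6} are already verified for the smooth $g$.
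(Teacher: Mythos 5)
Your proof is correct and follows the same overall decomposition as the paper: produce a $\mathcal{C}^\infty$ approximant $g$ of $h$ that is uniformly $\epsilon$-close and has Lipschitz constant at most $1+s$, apply the triangle inequality, and then invoke Corollary~\ref{cor4} for $g$ (whose $C_i(g)$, $i=1,2$, are finite by compactness). The one genuine difference is how the smoothing $g$ is produced. The paper treats this as a black box, citing Theorem~1 of \cite{AFLR}, which gives directly a $\mathcal{C}^\infty$ function $g$ on any Riemannian manifold with $\|g-h\|_\infty<\epsilon$ and $C_0(g)\leqslant 1+s$. You instead construct $g=Q_\tau h$ via the heat semigroup and bound its Lipschitz constant using the Bakry--Emery gradient commutation estimate $|\nabla Q_\tau h|\leqslant e^{K\tau/2}Q_\tau(|\nabla h|)$ under the lower Ricci bound that compactness provides. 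Both routes are valid here; your construction is more explicit and self-contained (it even gives a quantitative relation between $\tau$, $\epsilon$ and $s$), at the cost of invoking the gradient estimate machinery, whereas the paper's citation of \cite{AFLR} is shorter and applies on non-compact manifolds as well, which is not needed for this corollary. Your parenthetical Nash-embedding-plus-mollification alternative is plausible in spirit but would need more care (Lipschitz extension with respect to the intrinsic distance, and the fact that mollification in the ambient space leaves the submanifold), so the heat-semigroup argument is the cleaner of your two suggestions.
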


\begin{proof}
Since $\M$ is compact, any $g\in\mathcal C^\infty(\M)$ has bounded derivatives, and thus possesses finite Lipschitz constant $C_i(g), i=0,1,2,\ldots, k$ for every $k$. This ensures that Lipschitz constants $C_i(f_g), i=0,1,2$ of the Stein equation solution $f_g$ are finite. 

The existence of the requisite $g \in \mathcal{C}^2(\M)$ is guaranteed by the result in \cite{AFLR} on existence of a $\mathcal{C}^\infty$ Lipschitz approximation of a Lipschitz function. By Theorem 1 in \cite{AFLR}, for every Lipschitz function $h$ on $\M$ with Lipschitz constant 1 and for every $\epsilon, s>0$, there exists a $g \in \mathcal{C}^\infty(\M)$ such that
$\sup_{x \in \M}|g(x)-h(x)|<\epsilon \text{ with } C_0(g)\leqslant 1+s$. Thus, by applying Corollary \ref{cor4} to $g$, we have
\begin{eqnarray*}
&&|\E{h(Z)}-\E{h(X)}|\\
&\leqslant&|\E{h(Z)}-\E{g(Z)}|+|\E{g(X})-\E{h(X)}|+|\E{g(Z)}-\E{g(X)}| \\
&\leqslant&2\epsilon
 + |\E{g(Z)}-\E{g(X)}| \\
& \leqslant& 2\epsilon+\left\{mC_2(f_g)+C_0(f_g)C_1(\phi)+C_1(f_g)C_0(\phi)\right\}\E{\rho(Z,X)},
\end{eqnarray*}
as required. 
\end{proof}
 
Consider the function class
\[
\mathcal H^2_{\leq 1}=\{h\in\mathcal C^2(\M)\mid h\hbox{ is Lipschitz with }C_0(h) \leqslant1, C_1(h) \leqslant1,C_2(h) \leqslant1\}.
\] 
Since 
\[\sup_{h\in\mathcal H^2_{\leq 1}}\left|\E{h(Z)}-\E{h(X)}\right|=\sup_{h\in\mathcal H^2_{\leq 1}\cap\mathcal C_0(\M)}\left|\E{h(Z)}-\E{h(X)}\right|,\]
from Propositions $\ref{prop5}$, $\ref{prop6a}$ and $\ref{prop6}$, as well as Corollary \ref{cor4}, the following result on the bound for the integral (semi-)metric 
\[
d_{I}(Z,X):= \sup_{h \in \mathcal H^2_{\leq 1}} \left|E[h(Z)]-E[h(X)]\right|,
\]
is immediate.

\begin{theorem}
Assume that the conditions of Proposition $\ref{prop6}$ hold, and that $\phi$ is Lipschitz with Lipschitz constants $C_i(\phi)$, $i=0,1$. Then, for any random variable $Z$ on $\M$,
\[d_I(Z,X)\leqslant\eta^*\E{\rho(Z,X)},\]
where, if $\chi^{\phantom{A}}_2=0$,
\begin{eqnarray*}
\eta^*&=&
m\left\{\frac{1}{3\kappa}+\frac{1}{4\kappa^2}\left(3C_2(\phi)+C_3(\phi)\right)+\frac{3}{4\kappa^3}C_2(\phi)^2\right\}\\
&&+C_0(\phi)\left\{\frac{1}{2\kappa}+\frac{C_2(\phi)}{2\kappa^2}\right\}+C_1(\phi)\frac{1}{\kappa}
\end{eqnarray*}
while, if $\chi^{\phantom{A}}_2>0$, 
\begin{eqnarray*}
\eta^*&=&
m\left\{\frac{1}{3\kappa}+\frac{L({\rm Ric}^\sharp_\phi)}{2\kappa^2}+\frac{4}{8\kappa-1}\left(\frac{2\chi^{\phantom{A}}_2+\chi_1^2}{4\kappa+1}l\right)^{1/2}+\frac{2\tilde\beta}{2\kappa-1}\right\}\\
&&+C_0(\phi)\left\{\frac{1}{2\kappa}+\frac{L({\rm Ric}^\sharp_\phi)}{2\kappa^2}\right\}+C_1(\phi)\frac{1}{\kappa},
\end{eqnarray*}
and where the constants $\chi^{\phantom{A}}_1$, $\chi^{\phantom{A}}_2$ and $\tilde\beta$ are as in Proposition $\ref{prop6}$.
\label{thm6}
\end{theorem}

%\begin{supplement}
%\stitle{Auxiliary results}
%\sdescription{Appendix A contains a result on behaviour of the distance function around the first non-conjugate point of the cut locus, which is used in the proofs in Section 3. Appendix B is devoted to estimation of certain stochastic vector fields used in the proofs in Section 4.2.}
%\end{supplement}

%%%%%%%%%%%%%%%%%%%%%%%%%%%%%%%%%%%%%%%%%%%
\section*{Appendix}
\subsection{Appendix A\label{A1}}
The following is unpublished work of Barden \& Le (1997). It concerns the behaviour of the distance function $\rho$ on $\M\times\M$ in a neighbourhood of $(x,y)$ when $y$ is a non-first-conjugate cut point of $x$. It generalises a result of \cite{BL1}, where similar behaviour was studied when one of the two points in $\M$ is fixed, and is used in proofs of Theorems 1 and 2. 

\vskip 6pt
\setcounter{lemma}{2}
\begin{lemma}
	Given a point $x$ of a complete and connected Riemannian manifold $\M$ and a point $y$ in the non-conjugate part of the cut locus of $x$ $($and hence also vice-versa$)$, there is a neighbourhood $\mathcal N$ of $(x,y)$ in $\M\times\M$, a maximal integer $s>1$, and finitely many smooth functions $\varrho_i$, $i=1,\cdots,s$, on $\mathcal N$ such that $\varrho_i(x,y)=\rho(x,y)$ for $i=1,\cdots,s$; the germs of $\varrho_i$ at $(x,y)$ are uniquely defined, up to their ordering, and, for all $(x',y')$ in $\mathcal N$,
	\[\rho(x',y')=\min_{1\le i\le s}\left\{\varrho_i(x',y')\right\}.\]
	The set
	\[\mathcal N_{ij}=\{(x,y)\in\M\times\M\mid\varrho_i(x,y)=\varrho_j(x,y)\}\]
	is a co-dimension 1 submanifold of $\mathcal N$ meeting each slice $\{x\}\times\M$ transversely, and the set of points of $\mathcal N$ at which three or more of the $\varrho_i$ are equal is a finite union of co-dimension $2$ submanifolds, meeting $\{x\}\times\M$ transversely.
	\label{lem3}
\end{lemma}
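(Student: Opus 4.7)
The plan is to view $\rho$ locally as the minimum of the lengths of the intervening geodesics attached to the finitely many minimal geodesics from $x$ to $y$. First I would establish finiteness: since $y$ is a non-conjugate cut point of $x$, every $v\in T_x(\M)$ with $|v|=\rho(x,y)$ and $\exp_x(v)=y$ is a regular point of $\exp_x$, and by compactness of the sphere $\{v\in T_x(\M):|v|=\rho(x,y)\}$ the set of such vectors is simultaneously compact and discrete, hence finite; call them $v_1,\ldots,v_s$. Were $s=1$, the unique non-conjugate minimizer could be extended past $y$ as a still-minimizing geodesic, contradicting $y$ being a cut point, so $s>1$.

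Next I would construct the $\varrho_i$ using the map $\text{Exp}:T\M\to\M\times\M$, $(p,v)\mapsto(p,\exp_p(v))$. Because each $(x,v_i)\in\tilde{\mathcal E}$ is a regular point of $\text{Exp}$, the inverse function theorem produces disjoint neighborhoods $\mathcal V_i$ of $(x,v_i)$ and a common neighborhood $\mathcal N$ of $(x,y)$ on which each restriction $\text{Exp}|_{\mathcal V_i}:\mathcal V_i\to\mathcal N$ is a diffeomorphism. Writing its inverse as $(x',y')\mapsto(x',v_i'(x',y'))$, I would set $\varrho_i(x',y'):=|v_i'(x',y')|$. Each $\varrho_i$ is smooth, equals $\rho(x,y)$ at $(x,y)$, and (after shrinking $\mathcal N$ so the geodesic $s\mapsto\exp_{x'}(sv_i')$ stays free of conjugate points) is a locally minimizing geodesic length, giving $\rho\leqslant\varrho_i$ on $\mathcal N$.

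The main obstacle is the reverse inequality $\rho\geqslant\min_i\varrho_i$. I would argue by contradiction: if it fails on every neighborhood, there exist $(x_n,y_n)\to(x,y)$ and initial vectors $w_n\in T_{x_n}(\M)$ of minimal geodesics from $x_n$ to $y_n$, with $w_n$ outside every $\mathcal V_i$. Since $|w_n|=\rho(x_n,y_n)\to\rho(x,y)$, the $w_n$ lie in a compact subset of $T\M$; any subsequential limit $w$ then satisfies $|w|=\rho(x,y)$ and $\exp_x(w)=y$, so $w=v_j$ for some $j$ and $w_n\in\mathcal V_j$ for large $n$, a contradiction. Uniqueness of the germs up to ordering then follows, because any smooth candidate bounding $\rho$ from above and agreeing with it along sets accumulating at $(x,y)$ encodes a specific minimizer near $(x,y)$ and hence, by the local diffeomorphism property of $\text{Exp}|_{\mathcal V_i}$, coincides with exactly one of the $\varrho_i$.

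For the submanifold assertions I would invoke the first variation formula: $\nabla_{y'}\varrho_i(x',y')=e_i(x',y')$, the unit tangent vector at $y'$ of the $i$-th intervening geodesic pointing away from $x'$. On $\mathcal N_{ij}$ the vectors $e_i$ and $e_j$ must differ, for otherwise the two geodesics, sharing an endpoint and tangent, would coincide; so $\nabla_{y'}(\varrho_i-\varrho_j)\neq 0$, which simultaneously shows that $\mathcal N_{ij}$ is a codimension-$1$ smooth submanifold of $\mathcal N$ and that its intersection with each slice $\{x\}\times\M$ is transverse. For triple coincidences I would use the elementary observation that any affine line in $T_{y'}(\M)$ meets the unit sphere in at most two points; hence three distinct unit vectors $e_i,e_j,e_k$ are affinely independent, so $\nabla_{y'}(\varrho_i-\varrho_j)$ and $\nabla_{y'}(\varrho_i-\varrho_k)$ are linearly independent, giving the codimension-$2$ submanifold structure and transversality. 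Taking the union over the finitely many triples completes the statement; all of this reduces, after the compactness step, to routine first-variation book-keeping.
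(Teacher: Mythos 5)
Your proposal is correct and follows essentially the same route as the paper's proof: you consider the full map $\text{Exp}\colon T\M\to\M\times\M$, extract the finitely many regular preimages $(x,v_i)$ of $(x,y)$, apply the inverse function theorem to get the radial functions $\varrho_i$, and prove $\rho=\min_i\varrho_i$ by the same sequential-compactness contradiction; the submanifold claims are obtained, as in the paper, by computing the $y$-gradients $\nabla_{y'}\varrho_i=e_i$ and observing that distinct unit vectors make $\varrho_i-\varrho_j$ (and the pair $\varrho_i-\varrho_j$, $\varrho_i-\varrho_k$) have nonvanishing (resp.\ independent) fibre gradients. The only cosmetic difference is that you re-derive from scratch (finiteness of minimizers, $s>1$, the first-variation identity) a few facts the paper imports by citing its antecedent \cite{BL1}.
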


The germ of a function at a point $x$ is its equivalence class under agreement on a (variable) neighbourhood of $x$.

\begin{proof} Recall the result of Proposition 2 in \cite{BL1}: for a fixed point $x\in\M$, its (first) conjugate locus in its cut locus has co-dimension at least 2; and for $y$ in the non-conjugate part of the cut locus of $x$ there are finitely many locally defined smooth functions each of whose value at $y$ is $\rho(x,y)$, whose germ at $y$ is well-defined and such that the minimum of these functions on a neighbourhood of $y$ is the distance from $x$. These functions are induced by the radial distance in $T_x(\M)$ and were termed `radial functions'. They arise as follows. We let $v_1,\cdots,v_s\in T_x(\M)$ determine the full set of minimal geodesics $\gamma_i(t)=\exp_x(t v_i)$, $0\leqslant t\leqslant1$, from $x$ to $y$. The number $s$ of such geodesics will vary with $y$ but, outside the conjugate locus, is necessarily finite. We may choose neighbourhoods $\tilde{\mathcal V}_i$ of $v_i$ in $T_x(\M)$ and $\mathcal V$ of $y$ in $\M$ with each $\tilde{\mathcal V}_i$ mapped diffeomorphically onto $\mathcal V$ by $\exp_x$. In particular, $\mathcal V$ will be sufficiently small not to meet the conjugate locus. Then the relevant smooth functions, which were denoted by $\phi_i$ in \cite{BL1}, are given by
	\[\phi_i:\,\,V\longrightarrow{\mathbb R}_+;\,\,y\mapsto\left\Vert\left(\exp_x\big|_{\tilde{\mathcal V}_i}\right)^{-1}(y)\right\Vert.\]

	Now we consider the `full' exponential map Exp which maps $v\in T_x(\M)$ to $(x,\,\exp_x(v))$. If $\pi:\,\,\M\times\M\longrightarrow\M$ projects onto the second factor, then $\pi\circ\text{Exp}\big|_{T_x(\M)}=\exp_x$. Thus Exp is a differentiable mapping and, with respect to local coordinates on $\TM$ which are the product of those at $x$ on $\M$ and those in $T_x(\M)$, at any point $v$ in $T_x(\M)$, its derivative takes the form
	\[\left(\begin{matrix}Id&*\\0&D\exp_x(v)\end{matrix}\right).\]
	Thus Exp is non-singular at $v$ if and only if $\exp_x$ is non-singular at $v$.
	
	Then each $v_i$ has a neighbourhood $\tilde{\mathcal N}_i$ in $\TM$ such that $\tilde{\mathcal N}_i\cap T_x(\M)\subset\tilde{\mathcal V}_i$ and each of the restrictions $\text{Exp}|_{\tilde{\mathcal N}_i}$ is a diffeomorphism onto the same neighbourhood $\mathcal N$ of $(x,y)$ in $\M\times\M$. We may also choose $\mathcal N$ sufficiently small that for all $(x',y')\in\mathcal N$ any minimal geodesic from $x'$ to $y'$ is determined by $v'$ in some $\tilde{\mathcal N}_i\cap T_{x'}(\M)$. For otherwise there exists an infinite sequence of points $(x_n,y_n)$ in $\M\times\M$ converging to $(x,y)$ with minimal geodesics $\alpha_n(t)=\exp_{x_n}(tu_n)$, $0\leqslant t\leqslant1$, from $x_n$ to $y_n$ where $u_n$, the initial tangent vector to $\alpha_n$, is in $T_{x_n}(\M)$ but in no $\tilde{\mathcal N}_i$. Then, since $\alpha_n$ is minimal, the distance $\Vert u_n\Vert$ from $x_n$ to $y_n$ along $\alpha_n$ is equal to $\rho(x_n,y_n)$, which converges to $\rho(x,y)$. So the sequence $\{u_n\}$ is bounded and, without loss of generality, converges to $u$ say. However $\alpha_n(1)=y_n$ for all $n$ so $y=\lim\limits_{n\rightarrow\infty}\alpha_n(1)=\exp_x(u)$ and as $\Vert u \Vert=\rho(x,y)$, $u$ must be one of the $v_i$. Hence the $\{u_n\}$ would eventually lie in the corresponding $\tilde{\mathcal N}_i$ contrary to our hypothesis on the $u_n$. Thus, having chosen $\mathcal N$ sufficiently small to avoid the finitely many such points $(x_n,y_n)$ it follows that, for all $(x',y')$ in $\mathcal U$ each minimal geodesic $\gamma'(t)=\exp_{x'}(tu')$, $0\leqslant t\leqslant1$, from $x'$ to $y'$ has $u'$ in one of the sets $\tilde{\mathcal N}_i\cap T_{x'}(\M)$. Then $\rho(x',y')=\Vert u'\Vert=\varrho_i(x',y')$, where
	\[\varrho_i:\,\,\mathcal N\longrightarrow{\mathbb R}_+;\,\,(x',y')\mapsto\left\Vert\left(\text{Exp}\big|_{\tilde{\mathcal N}_i}\right)^{-1}(x',y')\right\Vert\]
	since ${\rm Exp}(x',u')=(x',y')$. Hence, for all $(x',y')$ in $\mathcal N$, $\rho(x',y')=\min\limits_{1\le i\le s}\varrho_i(x',y')$ as required.
	
	The remaining claims follow from those for the restrictions $\phi_i$ of $\varrho_i$ to $\mathcal V_x=(\{x\}\times\M)\cap\mathcal N$ established in \cite{BL1}: since $\nabla_y(\varrho_i-\varrho_j)\not=0$, we certainly have $\nabla(\varrho_i-\varrho_j)(x,y)\not=0$ making $\mathcal N_{ij}$ a co-dimension 1 manifold and, since it meets each $\mathcal V_x$ in a co-dimension 1 manifold $\mathcal V_{ij}$, it must do so transversally. Similarly, it is not possible for $\nabla(\varrho_i-\varrho_j)$ and $\nabla(\varrho_i-\varrho_k)$ to be linearly dependent at points of $\mathcal N_{ij}\cap\mathcal N_{ik}$ since their components in $\mathcal V_{ij}\cap\mathcal V_{ik}$ are not. Thus, $\mathcal N_{ij}$ and $\mathcal N_{ik}$ meet transversally in a co-dimension 2 submanifold, transverse to each $\mathcal V_x$.
\end{proof}

\vskip 6pt 
We note that, although all $s$ functions $\varrho_i$ take the value $\rho(x,y)$ at $(x,y)$, as $(x',y')$ varies in $\mathcal N$ certain of the $\varrho_i$ will take values greater than $\rho(x',y')$. In particular for points $(x',y')$ where $y'$ is not in the cut locus of $x'$, precisely one will take this value. When $y'$ is in the cut locus of $x'$ then at least two, but not necessary all, of the functions $\varrho_i$ take the value $\rho(x',y')$.
%%%%%%%%%%%%%%%%%%%%%%%%%%%%%%%%%%%%%%%%%%%
\subsection{Appendix B}
\label{A2}
This Appendix investigates certain properties of the stochastic vector fields 
\begin{eqnarray}
		\frac{Dv^x_t}{\d t}=-\frac{1}{2}\text{Ric}^\sharp_\phi(v^x_t)
		\label{eqn12a}
	\end{eqnarray}
and 
\begin{eqnarray}
		DV^x_t=R(\Xi\d B_t,u^x_t)v^x_t-\frac{1}{2}\left\{R^\sharp_\phi(u^x_t,v^x_t)+\hbox{Ric}^\sharp_\phi(V^x_t)\right\}\d t\thickspace,
		\label{eqn12c}
	\end{eqnarray}
where $\Xi$ is as defined in equations (4) and (7) in the main paper, and
\begin{eqnarray}
	R^\sharp_\phi=\d^\star\!R+D{\rm Ric}^\sharp_\phi+R(\nabla\phi).
	\label{eqn12b}
\end{eqnarray}
These properties play an important role in our study of the derivatives of $f_h$ in Section 5.  One of the main tools which will repeatedly be used here is the H\"older inequality.    

\vskip 6pt
First, we have the following bound related to the  $v^x_t$ defined by \eqref{eqn12a}.

\begin{lemma}
	Assume that $\M$ is a complete and connected Riemannian manifold and that the Bakry-Emery curvature criterion (A1) is satisfied for a constant $\kappa>0$. Then, the vector field $v^x_t$ defined by \eqref{eqn12a} satisfies
	\[\E{|v^x_t|^q}\leqslant|v|^qe^{-q\kappa t}\]
	for any $q>1$.
	\label{lem4}
\end{lemma}

The special case for $q=2$ of this result was given in \cite{AT}.%, (7.4).

\begin{proof}
	\begin{eqnarray*}
		\d|v^x_t|^2=2\left\langle v^x_t,\,-\frac{1}{2}\hbox{Ric}^\sharp_\phi(v^x_t)\right\rangle\,\d t
		=-\left\{\hbox{Hess}^\phi(v^x_t,v^x_t)+\hbox{Ric}(v^x_t,v^x_t)\right\}\d t.
	\end{eqnarray*}
	Regarding $|v^x_t|^q$ as a function of $|v^x_t|^2$, it follows that, for $q>1$,
	\[\E{|v^x_t|^q}=|v|^q-\frac{q}{2}\int_0^t\E{(\hbox{Ric}(v^x_s,v^x_s)+\hbox{Hess}^\phi(v^x_s,v^x_s))\,|v^x_s|^{q-2}}\d s.\]
	Thus, using the Bakry-Emery curvature criterion (A1), we have that
	\[\d\E{|v^x_t|^q}\leqslant-q\kappa\E{|v^x_t|^q}\d t,\]
	so that the required inequality follows.
\end{proof}

Now, for $x,y\in\M$ and $v\in T_x(\M)$, let $v_t^x$ be as given in \eqref{eqn12a} and let $v^y_t$ be the solution to \eqref{eqn12a} with the underlying path $X_{x,t}$ replaced by $Y_{y,t}$ and with the initial condition $v^y_0=\Pi_{\gamma_{x,y}}(v)$, where $Y_{y,t}$ is constructed as in the proof of Theorem 2 with $\ell=2q$ for each fixed $q\geqslant1$ specified in the following Lemma and where, as earlier,  $\gamma_{x,y}$ denotes a minimal geodesic from $y$ to $x$ and $\Pi_{\gamma_{x,y}}$ denotes the parallel transport from $T_y(\M)$ to $T_x(\M)$ along $\gamma_{x,y}$. 

\begin{lemma}
	Assume that $\M$ is a complete and connected Riemannian manifold and that the Bakry-Emery curvature criterion (A1) is satisfied for a constant $\kappa>0$. If ${\rm Ric}^\sharp_\phi$ is Lipschitz with Lipschitz constant $L({\rm Ric}^\sharp_\phi)$ then, for $q\geqslant1$,
	\[\E{\left|\Pi_{\gamma^{\phantom{A}}_{X_{x,t},Y_{y,t}}}(v^x_t)-v^y_t\right|^q}\leqslant\left(\frac{L({\rm Ric}^\sharp_\phi)}{2\kappa}\right)^q\rho(x,y)^q|v|^qe^{-q\kappa t}.\]
	\label{lem4a}
\end{lemma}

\begin{proof}
	As in the proof of Proposition 4, for any $u_t\in T_{X_{x,t}}(\M)$, let $\tilde u_t$ denote $\Pi_{\gamma^{\phantom{A}}_{X_{x,t},Y_{y,t}}}(u_t)$. Then, from \eqref{eqn12a}, it follows that
	\begin{eqnarray*}
		\d\,|\tilde v^x_t-v^y_t|^2%\\
		%&=&\left\langle\tilde v^x_t-v^y_t,\,\text{Ric}^\sharp_\phi(Y_{y,t})(v^y_t)-\Pi_{\gamma^{\phantom{A}}_{X_{x,t},Y_{y,t}}}\left(\text{Ric}^\sharp_\phi(X_{x,t})(v^x_t)\right)\right\rangle\,\d t\\
		&=&-\left\langle\tilde v^x_t-v^y_t,\,\text{Ric}^\sharp_\phi(Y_{y,t})(\tilde v^x_t))-\text{Ric}^\sharp_\phi(Y_{y,t})(v^y_t)\right\rangle\,\d t\\ 
		&&-\left\langle\tilde v^x_t-v^y_t,\,\Pi_{\gamma^{\phantom{A}}_{X_{x,t},Y_{y,t}}}\left(\text{Ric}^\sharp_\phi(X_{x,t})(v^x_t)\right)-\text{Ric}^\sharp_\phi(Y_{y,t})(\tilde v^x_t)\right\rangle\,\d t.
	\end{eqnarray*}
	By the Bakry-Emery curvature criterion (A1) and the assumption on $\hbox{Ric}^\sharp_\phi$, this gives that
	\begin{eqnarray*}
		\d\,\left|\tilde v^x_t-v^y_t\right|^2%\\  
		&\leqslant&-\left(\text{Ric}+\text{Hess}^\phi\right)(\tilde v^x_t-v^y_t,\,\tilde v^x_t-v^y_t)\,\d t\\ 
		&&+|\tilde v^x_t-v^y_t|\,\left|\text{Ric}^\sharp_\phi(X_{x,t})(v^x_t)-\Pi^{-1}_{\gamma^{\phantom{A}}_{X_{x,t},Y_{y,t}}}\left(\text{Ric}^\sharp_\phi(Y_{y,t})\right)(v^x_t)\right|\,d t\\
		&\leqslant&-2\kappa\,|\tilde v^x_t-v^y_t|^2\,\d t+L({\rm Ric}^\sharp_\phi)\,|\tilde v^x_t-v^y_t|\,|v^x_t|\,\rho(X_{x,t},\,Y_{y,t})\,\d t.
	\end{eqnarray*} 
	For $q\geqslant1$, treating $\left|\tilde v^x_t-v^y_t\right|^q$ as a function of $\left|\tilde v^x_t-v^y_t\right|^2$, we have that
	\begin{eqnarray}
		%\begin{array}{rcl}
		\d\,\left|\tilde v^x_t-v^y_t\right|^q\vspace{0.1cm}%\\
		%&=&\dfrac{q}{2}\left|\tilde v^x_t-v^y_t\right|^{q-2}\d\,\left|\tilde v^x_t-v^y_t\right|^2\\ 
		\leqslant-\kappa\,q\,|\tilde v^x_t-v^y_t|^q\,\d t+\dfrac{q}{2}\,L({\rm Ric}^\sharp_\phi)\,|\tilde v^x_t-v^y_t|^{q-1}|v^x_t|\,\rho(X_{x,t},\,Y_{y,t})\,\d t.
		%\end{array}
		\label{eqn11a}
	\end{eqnarray} 

	We now consider the cases $q=1$ and $q>1$ separately.
	
	\vskip 6pt
	$(i)$ If $q=1$, \eqref{eqn11a} simplifies to
	\begin{eqnarray*}
		\d\,\left|\tilde v^x_t-v^y_t\right|\leqslant-\kappa\,|\tilde v^x_t-v^y_t|\,\d t+\frac{1}{2}\,L({\rm Ric}^\sharp_\phi)\,|v^x_t|\,\rho(X_{x,t},\,Y_{y,t})\,\d t.
	\end{eqnarray*}
	Thus, after taking expectations on both sides, the H\"older inequality, Theorem 2 and Lemma \ref{lem4} together give that
	\begin{eqnarray*}
		\d\,\E{\left|\tilde v^x_t-v^y_t\right|}%&\leqslant&-\kappa\,\E{|\tilde v^x_t-v^y_t|}\,\d t\\
		%&&+\frac{1}{2}\,L({\rm Ric}^\sharp_\phi)\E{|v^x_t|^2}^{1/2}\E{\rho(X_{x,t},\,Y_{y,t})^2}^{1/2}\,\d t\\
		&\leqslant&-\kappa\E{|\tilde v^x_t-v^y_t|}\,\d t+\frac{1}{2}\,L({\rm Ric}^\sharp_\phi)\,\rho(x,y)\,|v|\,e^{-2\kappa t}\d t.
	\end{eqnarray*}
	Hence,
	\[\d\left(e^{\kappa t}\E{|\tilde v^x_t-v^y_t|}\right)\leqslant\frac{1}{2}\,L({\rm Ric}^\sharp_\phi)\,\rho(x,y)\,|v|\,e^{-\kappa t}\d t,\]
	so that
	\begin{eqnarray*}
		\E{|\tilde v^x_t-v^y_t|}%&\leqslant&\frac{1}{2}\,L({\rm Ric}^\sharp_\phi)\,\rho(x,y)\,|v|\,e^{-\kappa t}\int_0^te^{-\kappa s}\d s\\
		&\leqslant&\frac{1}{2\kappa}\,L({\rm Ric}^\sharp_\phi)\,\rho(x,y)\,|v|\,e^{-\kappa t}
	\end{eqnarray*}
	as required.
	
	\vskip 6pt
	$(ii)$ For $q>1$, we use similar arguments to those in $(i)$ above. First, applying the H\"older inequality, Theorem 2 and Lemma \ref{lem4}, we obtain from \eqref{eqn11a} that
	\begin{eqnarray*}
		\d\E{\left|\tilde v^x_t-v^y_t\right|^q}%\\
		%&\leqslant&-\kappa\,q\,\E{|\tilde v^x_t-v^y_t|^q}\,\d t+\frac{q}{2}\,L({\rm Ric}^\sharp_\phi)\,\E{|\tilde v^x_t-v^y_t|^{q-1}|v^x_t|\,\rho(X_{x,t},\,Y_{y,t})}\,\d t\\
		&\leqslant&-\,\kappa\,q\,\E{|\tilde v^x_t-v^y_t|^q}\,\d t\\
		&&\!\!\!+\,\frac{q\,L({\rm Ric}^\sharp_\phi)}{2}\,\E{|\tilde v^x_t-v^y_t|^q}^{1-1/q}\E{|v^x_t|^{2q}}^{1/(2q)}\E{\rho(X_{x,t},\,Y_{y,t})^{2q}}^{1/(2q)}\d t\\
		&\leqslant&-\,\kappa\,q\,\E{|\tilde v^x_t-v^y_t|^q}\,\d t+\frac{q}{2}\,L({\rm Ric}^\sharp_\phi)\,\rho(x,y)\,|v|\E{|\tilde v^x_t-v^y_t|^q}^{1-1/q}e^{-2\kappa t}\d t.
	\end{eqnarray*} 
	Then, writing $y_t=\left\{\E{|\tilde v^x_t-v^y_t|^q}\right\}^{1/q}$, the above gives that
	\[\d y_t\leqslant-\kappa\,y_t\,\d t+\frac{1}{2}\,L({\rm Ric}^\sharp_\phi)\,\rho(x,y)\,|v|\,e^{-2\kappa t}\d t.\]
	As in $(i)$, it follows that 
	%
	%\[\d\left(e^{\kappa t}y_t\right)\leqslant\frac{1}{2}\,L({\rm Ric}^\sharp_\phi)\,\rho(x,y)\,|v|\,e^{-\kappa t}\d t,\]
	%
	%so that
	%
	\[y_t\leqslant\frac{1}{2\kappa}\,L({\rm Ric}^\sharp_\phi)\,\rho(x,y)\,|v|\,e^{-\kappa t},\]
	as required.
\end{proof}

We now turn to the vector field $V^x_t$ defined by \eqref{eqn12c},  recalling that, if $\chi^{\phantom{A}}_2=0$, then $\chi^{\phantom{A}}_1=C_2(\phi)$.

\begin{lemma}
	Assume that $\M$ is a complete and connected Riemannian manifold of dimension $m$ and that the Bakry-Emery curvature criterion (A1) is satisfied for a constant $\kappa>0$. Assume further that
	\[\chi^{\phantom{A}}_1=\sup_{x\in\M}\|R^\sharp_\phi\|_{op}(x)\quad\hbox{and}\quad\chi^{\phantom{A}}_2=m\sup_{x\in\M}\|R\|^2_{op}(x)\]
	are both finite, where $R^\sharp_\phi$ is defined by \eqref{eqn12b}. Then, for $q\geqslant2$, 
	\begin{eqnarray*}
		\E{|V^x_t|^q}\leqslant\left\{
		\begin{array}{ll}
			\left(\dfrac{C_1(\phi)}{2\kappa}\right)^q|u|^q|v|^qe^{-q\kappa t}&\hbox{if }\chi^{\phantom{A}}_2=0\\
			\left(\dfrac{2(q-1)\chi^{\phantom{A}}_2+\chi_1^2}{4\kappa+1}\right)^{q/2}|u|^q|v|^qe^{q(1/4-\kappa)t}&\hbox{if }\chi^{\phantom{A}}_2>0.
		\end{array}
		\right.
	\end{eqnarray*}
	\label{lem7}
\end{lemma}

\begin{proof}
	Applying the It\^o formula to $|V^x_t|^2$, we have that
	\begin{eqnarray*}
		\d\,|V^x_t|^2=2\,\langle V^x_t,\,R(\Xi\d B_t,u^x_t)v^x_t\rangle+|R(\Xi\d B_t,u^x_t)v^x_t|^2%\\
		-\left\langle V^x_t,\,R^\sharp_\phi(u^x_t,v^x_t)+\hbox{Ric}^\sharp_\phi(V^x_t)\right\rangle\d t.
	\end{eqnarray*}
	For $q\geqslant2$, write $z_t=\E{|V^x_t|^q}$ and, treating $|V^x_t|^q$ as a function of $|V^x_t|^2$, an application of the It\^o formula to $|V^x_t|^q$ yields
	\begin{eqnarray*}
		\d z_t&=&\frac{q}{2}\E{|V^x_t|^{q-2}\left|R(\Xi\d B_t,u^x_t)v^x_t\right|^2}-\frac{q}{2}\E{|V^x_t|^{q-2}\left\langle V^x_t,\,R^\sharp_\phi(u^x_t,v^x_t)\right\rangle}\d t\\
		&&-\frac{q}{2}\E{|V^x_t|^{q-2}\left\langle V^x_t,\,\hbox{Ric}^\sharp_\phi(V^x_t)\right\rangle}\d t\\
		&&+\frac{1}{2}q(q-2)\E{|V^x_t|^{q-4}\langle V^x_t,\,R(\Xi\d B_t,u^x_t)v^x_t\rangle^2}
	\end{eqnarray*}
	so that, under the given conditions,
	\begin{eqnarray}
		%\begin{array}{rcl}
		\d z_t\leqslant\frac{q}{2}\left\{\chi^{\phantom{A}}_1\E{|V^x_t|^{q-1}|u^x_t|\,|v^x_t|}+(q-1)\,\chi^{\phantom{A}}_2\E{|V^x_t|^{q-2}|u^x_t|^2|v^x_t|^2}\right\}\d t
		-\,q\,\kappa z_t\d t.
		%\end{array}
		\label{eqn11c}
	\end{eqnarray}

	\vskip 6pt
	If $\chi^{\phantom{A}}_2=0$, then $\chi^{\phantom{A}}_1=C_2(\phi)$. we apply the H\"older inequality with conjugate indices $p'=q/(q-1)$ and $q'=q$, as well as Lemma \ref{lem4}, to \eqref{eqn11c} to get
	\begin{eqnarray*}
		\d z_t\leqslant\frac{q}{2}C_2(\phi)\,z_t^{1-1/q}\E{|u^x_t|^q|v^x_t|^q}^{1/q}\d t-q\kappa z_t\d t
		\leqslant\frac{q}{2}C_2(\phi)\,|u|\,|v|\,z_t^{1-1/q}e^{-2\kappa t}\d t-q\kappa z_t\d t. 
	\end{eqnarray*}
	Now, by letting $w=z^{1/q}$, we have %$\d z/(qz)=\d w/w$ and so
	\[\d w_t\leqslant\frac{1}{2}\left\{C_2(\phi)\,|u|\,|v|\,e^{-2\kappa t}-2\,\kappa\,w_t\right\}\d t\]
	%
	%Thus,
	%
	%\[\d\,(e^{\kappa t}w_t)\leqslant\frac{1}{2}C_2(\phi)\,|u|\,|v|\,e^{-\kappa t}\d t\]
	%
	so that, as $w_0=0$, 
	\[w_t\leqslant\frac{1}{2}C_2(\phi)\,|u|\,|v|\,e^{-\kappa t}\int_0^te^{-\kappa s}\d s\leqslant\frac{1}{2\kappa}C_2(\phi)\,|u|\,|v|\,e^{-\kappa t}\]
	as required.
	
	\vskip 6pt
	When $\chi^{\phantom{A}}_2>0$, we need to consider the cases $q=2$ and $q>2$ separately. First, we assume that $q=2$. Then, by H\"older's inequality and Lemma \ref{lem4}, it follows from \eqref{eqn11c} that
	\begin{eqnarray}
		\begin{array}{rcl}
			\d z_t&\leqslant&\chi^{\phantom{A}}_1\,z_t^{1/2}\E{|u^x_t|^4}^{1/4}\E{|v^x_t|^4}^{1/4}\d t\vspace{0.1cm}\\
			&&+\chi^{\phantom{A}}_2\E{|u^x_t|^4}^{1/2}\E{|v^x_t|^4}^{1/2}\d t-2\,\kappa\,z_t\d t\vspace{0.1cm}\\
			%&\leqslant&\chi^{\phantom{A}}_1\,z_t^{1/2}|u|\,|v|\,e^{-2\kappa t}\d t+\chi^{\phantom{A}}_2|u|^2|v|^2e^{-4\kappa t}\d t-2\,\kappa z_t\d t\vspace{0.1cm}\\
			%&\leqslant&\green{\frac{\chi^{\phantom{A}}_1}{2}\left(z_t+|u|^2|v|^2e^{-4\kappa t}\right)\d t+\chi^{\phantom{A}}_2|u|^2|v|^2e^{-4\kappa t}\d t-2\,\kappa z_t\d t}\\
			&\leqslant&\dfrac{1}{2}\left(z_t+\chi_1^2|u|^2|v|^2e^{-4\kappa t}\right)\d t+\chi^{\phantom{A}}_2|u|^2|v|^2e^{-4\kappa t}\d t-2\,\kappa z_t\d t\\
			%&=&\green{\left\{\left(\frac{\chi^{\phantom{A}}_1}{2}-2\,\kappa\right)z_t+\left(\frac{\chi^{\phantom{A}}_1}{2}+\chi^{\phantom{A}}_2\right)|u|^2|v|^2e^{-4\kappa t}\right\}\d t,}\\
			&=&\left\{\left(\dfrac{1}{2}-2\,\kappa\right)z_t+\left(\dfrac{\chi_1^2}{2}+\chi^{\phantom{A}}_2\right)|u|^2|v|^2e^{-4\kappa t}\right\}\d t. 
		\end{array}
		\label{eqn11e}
	\end{eqnarray}
	%
	%where the third inequality follows from the inequality $2ab\leqslant a^2+b^2$. 
	This gives that
	%
	%\begin{eqnarray*}
	%\green{\d\left(e^{(2\kappa-\chi^{\phantom{A}}_1/2) t}z_t\right)\leqslant\left(\frac{\chi^{\phantom{A}}_1}{2}+\chi^{\phantom{A}}_2\right)|u|^2|v|^2e^{-(2\kappa+\chi^{\phantom{A}}_1/2)t}\d t} 
	%\end{eqnarray*}
	%
	%
	\begin{eqnarray*}
		\d\left(e^{(2\kappa-1/2) t}z_t\right)\leqslant\left(\frac{\chi_1^2}{2}+\chi^{\phantom{A}}_2\right)|u|^2|v|^2e^{-(2\kappa+1/2)t}\d t 
	\end{eqnarray*}
	so that
	%
	%\begin{eqnarray*}
	%\green{z_t\leqslant\frac{\chi^{\phantom{A}}_1+2\chi^{\phantom{A}}_2}{4\kappa+\chi^{\phantom{A}}_1}|u|^2|v|^2e^{-(2\kappa-\chi^{\phantom{A}}_1/2) t}} 
	%\end{eqnarray*}
	%
	%
	\begin{eqnarray*}
		z_t\leqslant\frac{\chi_1^2+2\chi^{\phantom{A}}_2}{4\kappa+1}|u|^2|v|^2e^{-(2\kappa-1/2) t} 
	\end{eqnarray*}
	as required.
	
	\vskip 6pt
	If $q>2$, let $(p',q')$ and $(p'',q'')$ be two pairs of conjugate indices such that $p'=q/(q-1)$ and $p''=q/(q-2)$, so that $q'=q$ and $q''=q/2(>1)$. Using these two pairs of conjugate indices, an application of H\"older's inequality to \eqref{eqn11c} gives 
	\begin{eqnarray*}
		\d z_t\leqslant\frac{q}{2}\left\{\chi^{\phantom{A}}_1z_t^{1/p'}\E{|u^x_t|^q|v^x_t|^q}^{1/q'}+(q-1)\chi^{\phantom{A}}_2z_t^{1/p''}\E{|u^x_t|^q|v^x_t|^q}^{1/q''}\right\}\d t%\vspace{0.1cm}\\
		-\,\,q\,\kappa\,z_t\d t.
	\end{eqnarray*}
	Applying the H\"older inequality again, it follows from Lemma \ref{lem4} that 
	\begin{eqnarray*}
		\d z_t\leqslant\frac{q}{2}\,z_t\left\{\chi^{\phantom{A}}_1z_t^{-1/q}|u|\,|v|\,e^{-2\kappa t}+(q-1)\,\chi^{\phantom{A}}_2z_t^{-2/q}|u|^2|v|^2e^{-4\kappa t}-2\kappa\right\}\d t.
	\end{eqnarray*}

	\vskip 6pt
	Letting $w=z^{2/q}$, %we have $2\d z/(qz)=\d w/w$. The 
	the above then gives that
	\begin{eqnarray}
		\begin{array}{rcl}
			\d w_t&
			\leqslant&\left\{\chi^{\phantom{A}}_1w_t^{1/2}|u|\,|v|\,e^{-2\kappa t}+(q-1)\,\chi^{\phantom{A}}_2|u|^2|v|^2e^{-4\kappa t}-2\kappa w_t\right\}\d t\vspace{0.1cm}\\
			%\leqslant&\green{\left\{\left(\chi^{\phantom{A}}_1/2-2\,\kappa\right)w_t+((q-1)\,\chi^{\phantom{A}}_2+\chi^{\phantom{A}}_1/2)\,|u|^2|v|^2e^{-4\kappa t}\right\}\d t,}\\
			&\leqslant&\left\{\left(1/2-2\,\kappa\right)w_t+((q-1)\,\chi^{\phantom{A}}_2+\chi_1^2/2)\,|u|^2|v|^2e^{-4\kappa t}\right\}\d t,
		\end{array}
		\label{eqn11f}
	\end{eqnarray}
	%
	%that is
	%
	%\[\green{\d\,\left(e^{-(\chi^{\phantom{A}}_1/2-2\kappa)t}w_t\right)\leqslant\left((q-1)\,\chi^{\phantom{A}}_2+\chi_1/2\right)\,|u|^2|v|^2e^{-(\chi^{\phantom{A}}_1/2+2\kappa)t}\d t,}\]
	%
	%
	%\[\d\,\left(e^{-(1/2-2\kappa)t}w_t\right)\leqslant\left((q-1)\,\chi^{\phantom{A}}_2+\chi_1^2/2\right)\,|u|^2|v|^2e^{-(1/2+2\kappa)t}\d t,\]
	%
	yielding
	\begin{eqnarray*}
		%&&\green{w_t\leqslant\left((q-1)\,\chi^{\phantom{A}}_2+\chi^{\phantom{A}}_1/2\right)\,|u|^2|v|^2e^{(\chi^{\phantom{A}}_1/2-2\kappa)t}\int_0^te^{-(\chi^{\phantom{A}}_1/2+2\kappa)s}\d s}\\
		w_t\leqslant\left((q-1)\,\chi^{\phantom{A}}_2+\chi_1^2/2\right)|u|^2|v|^2e^{(1/2-2\kappa)t}\int_0^te^{-(1/2+2\kappa)s}\d s
	\end{eqnarray*}
	as $w_0=0$. Subsequently
	\begin{eqnarray*}
		w_t&\leqslant&%\green{\frac{(q-1)\,\chi^{\phantom{A}}_2+\chi^{\phantom{A}}_1/2}{2\kappa+\chi^{\phantom{A}}_1/2}|u|^2|v|^2e^{(\chi^{\phantom{A}}_1/2-2\kappa)t}}\\
		%\frac{(q-1)\,\chi^{\phantom{A}}_2+\chi_1^2/2}{2\kappa+1/2}|u|^2|v|^2e^{(1/2-2\kappa)t}\\
		%&\leqslant&%\green{\frac{2\,(q-1)\,\chi^{\phantom{A}}_2+\chi^{\phantom{A}}_1}{4\kappa+\chi^{\phantom{A}}_1}|u|^2|v|^2e^{(\chi^{\phantom{A}}_1/2-2\kappa) t}}\\
		\frac{2\,(q-1)\,\chi^{\phantom{A}}_2+\chi_1^2}{4\kappa+1}|u|^2|v|^2e^{(1/2-2\kappa)t}
	\end{eqnarray*}
	so that the result follows.
\end{proof}

It is possible, for the case $\chi^{\phantom{A}}_2>0$, to bound $\E{|V^x_t|^q}$ in \ref{lem7} differently. For example, if $q=2$, we can replace the expression on the right hand side of the third inequality in \eqref{eqn11e} by
\[\frac{\chi^{\phantom{A}}_1}{2}\left(z_t+|u|^2|v|^2e^{-4\kappa t}\right)\d t+\chi^{\phantom{A}}_2|u|^2|v|^2e^{-4\kappa t}\d t-2\,\kappa z_t\d t\]
and similarly, if $q>2$, replace the expression on the right hand side of the second inequality in \eqref{eqn11f} by
\[\left\{\left(\chi^{\phantom{A}}_1/2-2\,\kappa\right)w_t+((q-1)\,\chi^{\phantom{A}}_2+\chi^{\phantom{A}}_1/2)\,|u|^2|v|^2e^{-4\kappa t}\right\}\d t.\]
Then, following a similar analysis, we would obtain
\[\E{|V^x_t|^q}\leqslant\left(\frac{2(q-1)\chi^{\phantom{A}}_2+\chi^{\phantom{A}}_1}{4\kappa+\chi_1}\right)^{q/2}|u|^q|v|^qe^{q(\chi^{\phantom{A}}_1/4-\kappa)t}.\]
This feature also appears in the following Lemma. As the results of these two lemmas are used in the proof of Proposition 4, the consequence of this is that we could bound the Lipschitz constant $C_2(f_h)$ in that  proposition differently.

\vskip 6pt
We also need a version of Lemma \ref{lem4a} for $V^x_t$ and $V^y_t$. For this, similarly to the definitions for $v^x_t$ and $v^y_t$, for $x,y\in\M$ and $v,u\in T_x(\M)$, let $V_t^x$ be as given in \eqref{eqn12c} and let $V^y_t$ be the solution to \eqref{eqn12c} with the underlying path $X_{x,t}$ replaced by $Y_{y,t}$ and with $u$ and $v$ there replaced by $\Pi_{\gamma_{x,y}}(u)$ and $\Pi_{\gamma_{x,y}}(v)$ respectively, where $Y_{y,t}$ is constructed as in the proof of Theorem 2 with $\ell=6$ and where, as earlier, $\gamma_{x,y}$ denotes a minimal geodesic from $y$ to $x$ and $\Pi_{\gamma_{x,y}}$ denotes the parallel transport from $T_y(\M)$ to $T_x(\M)$ along $\gamma_{x,y}$. Then, we have the following result, recalling that, if $\chi^{\phantom{A}}_2=0$, then $\chi^{\phantom{A}}_1=L({\rm Ric}^\sharp_\phi)=C_2(\phi)$ and $L(R^\sharp_\phi)=C_3(\phi)$.

\begin{lemma}
	Assume that the conditions of Lemma $\ref{lem7}$ are satisfied. Assume further that ${\rm Ric}^\sharp_\phi$, $R^\sharp_\phi$ and $R$ are all Lipschitz with finite Lipschitz constants $L({\rm Ric}^\sharp_\phi)$, $L(R^\sharp_\phi)$ and $L(R)$ respectively. Then, if $\chi^{\phantom{A}}_2=0$,
	\[\E{\left|\Pi_{\gamma^{\phantom{A}}_{X_{x,t},Y_{y,t}}}(V^x_t)-V^y_t\right|}\leqslant\left(\frac{1}{4\kappa}C_3(\phi)+\frac{3}{4\kappa^2}C_2(\phi)^2\right)\rho(x,y)\,|u|\,|v|\,e^{-\kappa t}\]
	while, if $\chi^{\phantom{A}}_2>0$,
	\begin{eqnarray*}
		\E{\left|\Pi_{\gamma^{\phantom{A}}_{X_{x,t},Y_{y,t}}}(V^x_t)-V^y_t\right|^2}%\\
		\leqslant\left(\frac{\beta_1}{4\kappa+1}+\frac{\beta_2}{3\kappa+1}+\frac{\beta_3}{2\kappa+1}\right)\rho(x,y)^2|u|^2|v|^2e^{(1-2\kappa)t}
	\end{eqnarray*}
	where $\beta_i$, $i=1,2,3$, are as given in Proposition \red{4}.
	\label{lem7a}
\end{lemma}

\begin{proof}
	For simplicity, we write
	\[W_t=\Pi_{\gamma^{\phantom{A}}_{X_{x,t},Y_{y,t}}}(V^x_t)-V^y_t.\]
	Then, an application of It\^o formula to $|W_t|^2$ gives that
	\begin{eqnarray}
		\begin{array}{rcl}
			\d\,|W_t|^2%\vspace{0.1cm}\\
			&=&2\,\langle W_t,\,\Pi_{\gamma^{\phantom{A}}_{X_{x,t},Y_{y,t}}}(R(\Xi(X_{x,t})\d B_t,u^x_t)v^x_t)\rangle\vspace{0.1cm}\\
			&&-2\,\langle W_t,\,R(\Pi_{\gamma^{\phantom{A}}_{X_{x,t},Y_{y,t}}}\Xi(X_{x,t})\d B_t,u^y_t)v^y_t\rangle\\
			&&+\left|\Pi_{\gamma^{\phantom{A}}_{X_{x,t},Y_{y,t}}}\!\!\!(R(\Xi(X_{x,t})\d B_t,u^x_t)v^x_t)-R(\Pi_{\gamma^{\phantom{A}}_{X_{x,t},Y_{y,t}}}\!\!\!\Xi(X_{x,t})\d B_t,u^y_t)v^y_t\right|^2\vspace{0.1cm}\\
			&&-\langle W_t,\,\Pi_{\gamma^{\phantom{A}}_{X_{x,t},Y_{y,t}}}(R^\sharp_\phi(X_{x,t})(u^x_t,v^x_t))-R^\sharp_\phi(Y_{y,t})(u^y_t,v^y_t)\rangle\d t\vspace{0.1cm}\\
			&&-\langle W_t,\,\Pi_{\gamma^{\phantom{A}}_{X_{x,t},Y_{y,t}}}(\hbox{Ric}^\sharp_\phi(X_{x,t})(V^x_t))-\hbox{Ric}^\sharp_\phi(Y_{y,t})(V^y_t)\rangle\d t.
		\end{array}
		\label{eqn11b}
	\end{eqnarray}

	Assume first that $\chi^{\phantom{A}}_2\not=0$. As in the proof of Lemma \ref{lem4a}, for the third term on the right hand side of \eqref{eqn11b}, we have
	\begin{eqnarray*}
		&&|\Pi_{\gamma^{\phantom{A}}_{X_{x,t},Y_{y,t}}}(R(\Xi(X_{x,t})\d B_t,u^x_t)v^x_t)-R(\Pi_{\gamma^{\phantom{A}}_{X_{x,t},Y_{y,t}}}\Xi(X_{x,t})\d B_t,u^y_t)v^y_t|^2\\
		%&\leqslant&2\,|R(\Pi_{\gamma^{\phantom{A}}_{X_{x,t},Y_{y,t}}}\Xi(X_{x,t})\d B_t,\tilde u^x_t)\tilde v^x_t-R(\Pi_{\gamma^{\phantom{A}}_{X_{x,t},Y_{y,t}}}\Xi(X_{x,t})\d B_t,u^y_t)v^y_t|^2\\
		%&&+2\,|\Pi_{\gamma^{\phantom{A}}_{X_{x,t},Y_{y,t}}}(R(\Xi(X_{x,t})\d B_t,u^x_t)v^x_t)-R(\Pi_{\gamma^{\phantom{A}}_{X_{x,t},Y_{y,t}}}\Xi(X_{x,t})\d B_t,\tilde u^x_t)\tilde v^x_t|^2\\
		&\leqslant&4\,|R(\Pi_{\gamma^{\phantom{A}}_{X_{x,t},Y_{y,t}}}\Xi(X_{x,t})\d B_t,\tilde u^x_t-u^y_t)\tilde v^x_t|^2%\\
		+4\,|R(\Pi_{\gamma^{\phantom{A}}_{X_{x,t},Y_{y,t}}}\Xi(X_{x,t})\d B_t,u^y_t)(\tilde v^x_t-v^y_t)|^2\\
		&&+2\,|R(\Xi(X_{x,t})\d B_t,u^x_t)v^x_t)-\Pi_{\gamma^{\phantom{A}}_{X_{x,t},Y_{y,t}}}^{-1}(R(Y^y_t))(\Xi(X_{x,t})\d B_t,u^x_t)v^x_t|^2\d t\\
		&\leqslant&4\,\chi^{\phantom{A}}_2\left(|\tilde u^x_t-u^y_t|^2|\tilde v^x_t|^2+|u^y_t|^2|\tilde v^x_t-v^y_t|^2\right)\d t
		+2\,m\,L(R)^2\rho(X_{x,t},Y_{y,t})^2|u^x_t|^2|v^x_t|^2\d t.
	\end{eqnarray*} 
	Thus, by H\"older's inequality, Theorem 2, as well as Lemmas \ref{lem4} and \ref{lem4a},
	\begin{eqnarray*}
		&&\E{|\Pi_{\gamma^{\phantom{A}}_{X_{x,t},Y_{y,t}}}(R(\Xi(X_{x,t})\d B_t,u^x_t)v^x_t)-R(\Pi_{\gamma^{\phantom{A}}_{X_{x,t},Y_{y,t}}}\Xi(X_{x,t})\d B_t,u^y_t)v^y_t|^2}\\
		%&\leqslant&4\,\chi^{\phantom{A}}_2\E{|\tilde u^x_t-u^y_t|^4}^{1/2}\E{|\tilde v^x_t|^4}^{1/2}\d t\\
		%&&+4\,\chi^{\phantom{A}}_2\E{|u^y_t|^4}^{1/2}\E{|\tilde v^x_t-v^y_t|^4}^{1/2}\d t\\
		%&&+2\,m\,L(R)^2\E{\rho(X_{x,t},Y_{y,t})^6}^{1/3}\E{|u^x_t|^6}^{1/3}\E{|v^x_t|^6}^{1/3}\d t\\
		&\leqslant&\rho(x,y)^2|u|^2|v|^2\left\{8\,\chi^{\phantom{A}}_2\left(\frac{L({\rm Ric}^\sharp_\phi)}{2\kappa}\right)^2e^{-4\kappa t}+2\,m\,L(R)^2e^{-6\kappa t}\right\}\d t.
	\end{eqnarray*} 

	Similarly, for the fourth term on the right hand side of \eqref{eqn11b}, 
	\begin{eqnarray}
		\begin{array}{rcl}
			&&-\langle W_t,\,\Pi_{\gamma^{\phantom{A}}_{X_{x,t},Y_{y,t}}}(R^\sharp_\phi(X_{x,t})(u^x_t,v^x_t))-R^\sharp_\phi(Y_{y,t})(u^y_t,v^y_t)\rangle\\
			%&=&-\langle W_t,\,\Pi_{\gamma^{\phantom{A}}_{X_{x,t},Y_{y,t}}}(R^\sharp_\phi(X_{x,t})(u^x_t,v^x_t))-R^\sharp_\phi(Y_{y,t})(\tilde u^x_t,\tilde v^x_t)\rangle\\
			%&&-\langle W_t,\,R^\sharp_\phi(Y_{y,t})(\tilde u^x_t,\tilde v^x_t))-R^\sharp_\phi(Y_{y,t})(u^y_t,v^y_t)\rangle\vspace{0.1cm}\\
			&\leqslant&|W_t|\,|R^\sharp_\phi(X_{x,t})(u^x_t,v^x_t)-\Pi_{\gamma^{\phantom{A}}_{X_{x,t},Y_{y,t}}}^{-1}(R^\sharp_\phi(Y_{y,t}))(u^x_t,v^x_t)|\\
			&&+|W_t|\left\{|R^\sharp_\phi(Y_{y,t})((\tilde u^x_t-u^y_t,\tilde v^x_t)|+|R^\sharp_\phi(Y_{y,t})(u^y_t,\tilde v^x_t-v^y_t))|\right\}\vspace{0.1cm}\\
			&\leqslant&L(R^\sharp_\phi)\,|W_t|\,\rho(X_{x,t},Y_{y,t})\,|u^x_t|\,|v^x_t|%\vspace{0.1cm}\\
			+\chi^{\phantom{A}}_1|W_t|\left\{|\tilde u^x_t-u^y_t|\,|\tilde v^x_t|+|u^y_t|\,|\tilde v^x_t-v^y_t|\right\}.
		\end{array}
		\label{eqn11g}
	\end{eqnarray}
	Again, by H\"older's inequality, Theorem 2, as well as Lemmas \ref{lem4} and \ref{lem4a}, we have
	\begin{eqnarray*}
		&&-\E{\langle W_t,\,\Pi_{\gamma^{\phantom{A}}_{X_{x,t},Y_{y,t}}}(R^\sharp_\phi(X_{x,t})(u^x_t,v^x_t))-R^\sharp_\phi(Y_{y,t})(u^y_t,v^y_t)\rangle}\\
		&\leqslant&L(R^\sharp_\phi)\E{|W_t|^2}^{1/2}\E{\rho(X_{x,t},Y_{y,t})^6}^{1/6}\E{|u^x_t|^6}^{1/6}\E{|v^x_t|^6}^{1/6}\\
		&&+\chi^{\phantom{A}}_1\E{|W_t|^2}^{1/2}\E{|\tilde u^x_t-u^y_t|^4}^{1/4}\E{|\tilde v^x_t|^4}^{1/4}\\
		&&+\chi^{\phantom{A}}_1\E{|W_t|^2}^{1/2}\E{|u^y_t|^4}^{1/4}\E{|\tilde v^x_t-v^y_t|^4}^{1/4}\\
		&\leqslant&\rho(x,y)\,|u|\,|v|\,\E{|W_t|^2}^{1/2}\left\{L(R^\sharp_\phi)\,e^{-3\kappa t}+\frac{\chi^{\phantom{A}}_1L({\rm Ric}^\sharp_\phi)}{\kappa}e^{-2\kappa t}\right\}\\
		&\leqslant&\frac{1}{2}\E{|W_t|^2}+\frac{1}{2}\rho(x,y)^2|u|^2|v|^2\left(L(R^\sharp_\phi)\,e^{-3\kappa t}+\frac{\chi^{\phantom{A}}_1}{\kappa}L({\rm Ric}^\sharp_\phi)\,e^{-2\kappa t}\right)^2.
	\end{eqnarray*}

	Finally, for the fifth term of the right hand side of \eqref{eqn11b},
	\begin{eqnarray}
		\begin{array}{rl}
			&\quad-\,\langle W_t,\,\Pi_{\gamma^{\phantom{A}}_{X_{x,t},Y_{y,t}}}(\hbox{Ric}^\sharp_\phi(X_{x,t})(V^x_t))-\hbox{Ric}^\sharp_\phi(Y_{y,t})(V^y_t)\rangle\vspace{0.1cm}\\
			%&=&-\left\langle W_t,\,\text{Ric}^\sharp_\phi(Y_{y,t})(\tilde V^x_t)-\text{Ric}^\sharp_\phi(Y_{y,t})(V^y_t)\right\rangle\vspace{0.1cm}\\ 
			%&&-\left\langle W_t,\,\Pi_{\gamma^{\phantom{A}}_{X_{x,t},Y_{y,t}}}(\text{Ric}^\sharp_\phi(X_{x,t})(V^x_t))-\text{Ric}^\sharp_\phi(Y_{y,t})(\tilde V^x_t)\right\rangle\vspace{0.1cm}\\
			&\leqslant-\left(\text{Ric}+\text{Hess}^\phi\right)(W_t,\,W_t)%\vspace{0.1cm}\\
			+\,|W_t|\,|\text{Ric}^\sharp_\phi(X_{x,t})(V^x_t)-\Pi^{-1}_{\gamma^{\phantom{A}}_{X_{x,t},Y_{y,t}}}(\text{Ric}^\sharp_\phi(Y_{y,t}))(V^x_t)|\vspace{0.1cm}\\
			&\leqslant-\,2\kappa\,|W_t|^2+L(\text{Ric}^\sharp_\phi)\,|W_t|\,|V^x_t|\,\rho(X_{x,t},\,Y_{y,t}).
		\end{array}
		\label{eqn11d}
	\end{eqnarray} 
	With a similar argument to that above, we obtain that
	\begin{eqnarray*}
		&&-\E{\langle W_t,\,\Pi_{\gamma^{\phantom{A}}_{X_{x,t},Y_{y,t}}}(\hbox{Ric}^\sharp_\phi(X_{x,t})(V^x_t))-\hbox{Ric}^\sharp_\phi(Y_{y,t})(V^y_t)\rangle}\\
		&\leqslant&-2\kappa\,\E{|W_t|^2}+L(\text{Ric}^\sharp_\phi)\,\E{|W_t|^2}^{1/2}\E{|V^x_t|^4}^{1/4}\E{\rho(X_{x,t},\,Y_{y,t})^4}^{1/4}\\
		%&\leqslant&-2\kappa\,\E{|W_t|^2}\\
		%&&+L(\text{Ric}^\sharp_\phi)\left(\frac{6\chi^{\phantom{A}}_2+\chi_1^2}{4\kappa+1}\right)^{1/2}\E{|W_t|^2}^{1/2}\rho(x,y)\,|u|\,|v|\,e^{(1/4-2\kappa)t}\\
		&\leqslant&\left\{-2\kappa+\frac{1}{2}\right\}\E{|W_t|^2}+\frac{1}{2}\frac{6\chi^{\phantom{A}}_2+\chi_1^2}{4\kappa+1}L(\text{Ric}^\sharp_\phi)^2\rho(x,y)^2|u|^2|v|^2e^{(1/2-4\kappa)t}
	\end{eqnarray*} 
	as $\chi^{\phantom{A}}_2\not=0$. 
	
	\vskip 6pt
	Returning to \eqref{eqn11b}, the above calculations imply that
	\begin{eqnarray*}
		&&\d\E{|W_t|^2}\leqslant\alpha\E{|W_t|^2}\d t\\
		&&\qquad+\rho(x,y)^2|u|^2|v|^2\left(\beta^*_1e^{-6\kappa t}+\beta_2e^{-5\kappa t}+\beta_3e^{-4\kappa t}+\beta_4^*e^{(1/2-4\kappa)t}\right)\d t,
	\end{eqnarray*}
	where $\alpha=1-2\kappa$, $\beta^*_1=2mL(R)^2+L(R^\sharp_\phi)^2/2$, $\beta^*_4=(\beta_1-\beta_1^*)/2$ and $\beta_1$, $\beta_2$, $\beta_3$ are as given in the statement of the Lemma. Hence,
	\begin{eqnarray*}
		\d\left(e^{-\alpha t}\E{|W_t|^2}\right)
		\leqslant\rho(x,y)^2|u|^2|v|^2\left(\beta^*_1e^{-6\kappa t}+\beta_2e^{-5\kappa t}+\beta_3e^{-4\kappa t}+\beta^*_4e^{(1/2-4\kappa)t}\right)e^{-\alpha t}\d t,
	\end{eqnarray*}
	so that
	\begin{eqnarray*}
		\E{|W_t|^2}
		&\leqslant&%\rho(x,y)^2|u|^2|v|^2\left(\frac{\beta^*_1}{6\kappa+\alpha}+\frac{\beta_2}{5\kappa+\alpha}+\frac{\beta_3}{4\kappa+\alpha}+\frac{\beta^*_4}{4\kappa+\alpha-1/2}\right)e^{\alpha t}\\
		%&=&
		\rho(x,y)^2|u|^2|v|^2\left(\frac{\beta_1}{4\kappa+1}+\frac{\beta_2}{3\kappa+1}+\frac{\beta_3}{2\kappa+1}\right)e^{\alpha t}
	\end{eqnarray*}
	as required.
	
	\vskip 6pt
	If $\chi^{\phantom{A}}_2=0$, then \eqref{eqn11b} gives %reduces to 
	%
	%\begin{eqnarray*}
	%\d\,|W_t|^2&=&-\langle W_t,\,\Pi_{\gamma^{\phantom{A}}_{X_{x,t},Y_{y,t}}}(R^\sharp_\phi(X_{x,t})(u^x_t,v^x_t))-R^\sharp_\phi(Y_{y,t})(u^y_t,v^y_t)\rangle\d t\\
	%&&-\langle W_t,\,\Pi_{\gamma^{\phantom{A}}_{X_{x,t},Y_{y,t}}}(\hbox{Ric}^\sharp_\phi(X_{x,t})(V^x_t))-\hbox{Ric}^\sharp_\phi(Y_{y,t})(V^y_t)\rangle\d t,
	%\end{eqnarray*} 
	%
	%and so consequently
	%
	\begin{eqnarray*}
		\d\,|W_t|&=&-\frac{1}{2|W_t|}\langle W_t,\,\Pi_{\gamma^{\phantom{A}}_{X_{x,t},Y_{y,t}}}(R^\sharp_\phi(X_{x,t})(u^x_t,v^x_t))-R^\sharp_\phi(Y_{y,t})(u^y_t,v^y_t)\rangle\d t\\
		&&-\frac{1}{2|W_t|}\langle W_t,\,\Pi_{\gamma^{\phantom{A}}_{X_{x,t},Y_{y,t}}}(\hbox{Ric}^\sharp_\phi(X_{x,t})(V^x_t))-\hbox{Ric}^\sharp_\phi(Y_{y,t})(V^y_t)\rangle\d t.
	\end{eqnarray*} 
	Thus, from the above analysis \eqref{eqn11g} and \eqref{eqn11d} of the third and fourth terms respectively on the right hand side of \eqref{eqn11b}, it follows that
	\begin{eqnarray*}
		\d\,|W_t|
		&\leqslant&\frac{1}{2}\left\{L(R^\sharp_\phi)\,\rho(X_{x,t},Y_{y,t})\,|u^x_t|\,|v^x_t|+\chi^{\phantom{A}}_1\left(|\tilde u^x_t-u^y_t|\,|\tilde v^x_t|+|u^y_t|\,|\tilde v^x_t-v^y_t|\right)\right\}\d t\\
		&&+\left\{-\kappa\,|W_t|+\frac{1}{2}L(\hbox{Ric}^\sharp_\phi)\,\rho(X_{x,t},\,Y_{y,t})\,|V^x_t|\right\}\d t.
	\end{eqnarray*}
	Using the fact that, if $\chi^{\phantom{A}}_2=0$, then $\chi^{\phantom{A}}_1=L({\rm Ric}^\sharp_\phi)=C_2(\phi)$ and $L(R^\sharp_\phi)=C_3(\phi)$, we then have
	\begin{eqnarray*}
		\d\left(e^{\kappa t}\E{|W_t|}\right)&\leqslant&\frac{1}{2}\rho(x,y)\,|u|\,|v|\,\left\{C_3(\phi)\,e^{-2\kappa t}+\frac{1}{\kappa}C_2(\phi)^2e^{-\kappa t}\right\}\d t\\
		&&+\frac{1}{4\kappa}\,C_2(\phi)^2\rho(x,y)\,|u|\,|v|\,e^{-\kappa t}\d t
	\end{eqnarray*}
	yielding
	\begin{eqnarray*}
		\E{|W_t|}
		\leqslant\left\{\frac{1}{4\kappa}C_3(\phi)+\frac{3}{4\kappa^2}C_2(\phi)^2\right\}\rho(x,y)\,|u|\,|v|\,e^{-\kappa t}
	\end{eqnarray*}
	as required.
\end{proof}	
\section*{Acknowledgements}
We are grateful to Dennis Barden for permission to include his unpublished work (Lemma 3 in Appendix A of Supplementary Material), jointly with HL. This result lays the geometric foundation for our analysis of the distance between the pair of diffusions. KB acknowledges partial support from grants EPSRC EP/V048104/1, NSF 2015374 and NIH R37-CA214955. 
%%%%%%%%%%%%%%%%%%%%%%%%%%%%%%%%%%%%%%%%%%
\bibliography{biblio}
\bibliographystyle{plainnat}

\end{document}